\newtheorem{theorem}{Theorem}[section]
\newtheorem{proposition}[theorem]{Proposition}
\newtheorem{corollary}[theorem]{Corollary}
\newtheorem{lemma}[theorem]{Lemma}
\newtheorem*{opq*}{\bf Problem}
\theoremstyle{remark}
\newtheorem{remark}[theorem]{Remark}
\theoremstyle{definition}
\newtheorem{example}[theorem]{Example}
\newcommand{\B}{\boldsymbol B}
\newcommand{\bq}{\begin{equation}}
\newcommand{\eq}{\end{equation}}
\newcommand{\beqn}{\begin{eqnarray*}}
\newcommand{\eeqn}{\end{eqnarray*}}
\newcommand{\beq}{\begin{eqnarray}}
\newcommand{\eeq}{\end{eqnarray}}
\newcommand{\bc}{\begin{centre}}
\newcommand{\ec}{\end{centre}}
\newcommand{\ba}{\begin{array}}
\newcommand{\ea}{\end{array}}
\newcommand{\inp}[2]{\langle{#1},\,{#2} \rangle}
\def \C{\mathbb{C}}
\newcommand*{\borel}[1]{{\mathfrak B}(#1)}
\newcommand*{\child}[1]{\mathsf{Chi}(#1)}
\newcommand*{\childs}[2]{\mathsf{Chi}_{#1}(#2)}
\newcommand*{\childn}[2]{{\mathsf{Chi}}^{\langle#1\rangle}(#2)}
\newcommand*{\childns}[3]{{\mathsf{Chi}}_{#1}^{\langle#2\rangle}(#3)}
\newcommand*{\des}[1]{{{\mathsf{Des}}(#1)}}
\newcommand*{\dess}[2]{{{\mathsf{Des}}_{#1}(#2)}}
\newcommand*{\gammab}{\boldsymbol\gamma}
\newcommand*{\Ge}{\geqslant}
\newcommand*{\hh}{\mathcal{H}}
\newcommand*{\kk}{\mathcal{K}}
\newcommand*{\lambdab}{\boldsymbol\lambda}
\newcommand*{\Le}{\leqslant}
\newcommand*{\nbb}{\mathbb N}
\newcommand*{\parent}[1]{\mathsf{par}(#1)}
\newcommand*{\parents}[2]{\mathsf{par}_{#1}(#2)}
\newcommand*{\ran}{\mathrm{ran\,}}
\newcommand*{\rbb}{\mathbb R}
\newcommand*{\rot}{\mathsf{\omega}}
\newcommand*{\slam}{S_{\lambdab}}
\newcommand*{\slamj}{S_{\mathbb 1}}
\newcommand*{\supp}[1]{\mathrm{supp}(#1)}
\newcommand*{\tcal}{\mathscr T}
\newcommand*{\zbb}{\mathbb Z}
\begin{document}
   \title[A solution to the Cauchy dual subnormality problem]
{A solution to the Cauchy dual subnormality \\ problem
for $2$-isometries}
   \author[A. Anand]{Akash Anand}
   \address{Department of Mathematics and Statistics\\
Indian Institute of Technology Kanpur, India}
   \email{akasha@iitk.ac.in}
   \author[S. Chavan]{Sameer Chavan}
   \address{Department of Mathematics and Statistics\\
Indian Institute of Technology Kanpur, India}
   \email{chavan@iitk.ac.in}
   \author[Z.\ J.\ Jab{\l}o\'nski]{Zenon Jan
Jab{\l}o\'nski}
   \address{Instytut Matematyki,
Uniwersytet Jagiello\'nski, ul.\ \L ojasiewicza 6,
PL-30348 Kra\-k\'ow, Poland}
\email{Zenon.Jablonski@im.uj.edu.pl}
   \author[J.\ Stochel]{Jan Stochel}
\address{Instytut Matematyki, Uniwersytet
Jagiello\'nski, ul.\ \L ojasiewicza 6, PL-30348
Kra\-k\'ow, Poland} \email{Jan.Stochel@im.uj.edu.pl}
   \thanks{The research of the
third and fourth authors was supported by the NCN
(National Science Center), decision No.
DEC-2013/11/B/ST1/03613.}
   \subjclass[2010]{Primary 47B20, 47B37 Secondary
44A60} \keywords{Cauchy dual operator, $2$-isometry,
subnormal operator, moment problem, kernel condition,
quasi-Brownian isometry, weighted shift on a directed
tree, adjacency operator}
   \begin{abstract}
The Cauchy dual subnormality problem asks whether the
Cauchy dual operator $T^{\prime}:=T(T^*T)^{-1}$ of a
$2$-isometry $T$ is subnormal. In the present paper we
show that the problem has a negative solution. The
first counterexample depends heavily on a
reconstruction theorem stating that if $T$ is a
$2$-isometric weighted shift on a rooted directed tree
with nonzero weights that satisfies the perturbed
kernel condition, then $T^{\prime}$ is subnormal if
and only if $T$ satisfies the (unperturbed) kernel
condition. The second counterexample arises from a
$2$-isometric adjacency operator of a locally finite
rooted directed tree again by thorough investigations
of positive solutions of the Cauchy dual subnormality
problem in this context. We prove that if $T$ is a
$2$-isometry satisfying the kernel condition or a
quasi-Brownian isometry, then $T^{\prime}$ is
subnormal. We construct a $2$-isometric adjacency
operator $T$ of a rooted directed tree such that $T$
does not satisfy the kernel condition, $T$ is not a
quasi-Brownian isometry and $T^{\prime}$ is subnormal.
   \end{abstract}

   \maketitle

   \section{Introduction}
Let $\hh$ be a (complex) Hilbert space. Denote by
$\B(\hh)$ the $C^*$-algebra of all bonded linear
operators on $\hh$. An operator $S \in \B(\hh)$ is
said to be {\it subnormal} if there exist a Hilbert
space $\kk$ containing $\hh$ and a normal operator
$N\in \B(\kk)$ such that $Nh = Sh$ for every $h \in
\hh.$ An operator $T \in \B(\hh)$ is called {\it
hyponormal} if the self-commutator $T^*T-TT^*$ of $T$
is positive. We recall that every subnormal operator
is hyponormal, but not conversely. For a comprehensive
account on the theory of subnormal and hyponormal
operators, the reader is referred to~ \cite{Co}.

Given a positive integer $m$, we say that an operator
$T\in \B(\hh)$ is an {\it $m$-isometry} (or {\em
$m$-isometric}) if $B_m(T) = 0$, where
   \begin{align*}
B_m(T) = \sum_{k=0}^m (-1)^k {m \choose k}{T^*}^kT^k.
   \end{align*}
Clearly, a $1$-isometry is an isometry. We say that
$T$ is {\it $2$-hyperexpansive} (resp., {\em
completely hyperexpansive}) if $B_2(T) \Le 0$ (resp.,
$B_m(T) \Le 0$ for all positive integers $m$). The
notion of an $m$-isometric operator has been invented
by Agler (see \cite[p.\ 11]{Ag-0}). The concept of a
$2$-hyperexpansive operator goes back to Richter
\cite{R-0} (see also \cite[Remark~ 2]{At}). The notion
of a completely hyperexpansive operator has been
introduced by Athavale \cite{At}. It is well-known
that a $2$-isometry is $m$-isometric for every integer
$m\Ge 2$, and thus it is completely hyperexpansive
(see \cite[Paper I, \S 1]{Ag-St}).

The {\it Cauchy dual operator} $T'$ of a
left-invertible operator $T\in \B(\hh)$ is defined~
by\footnote{\;Note that left-invertibility of $T$
implies invertibility of $T^*T$ in the algebra
$\B(\hh)$.}
   \begin{align*}
T'=T(T^*T)^{-1}.
   \end{align*}
$T'$ is also called the {\em Cauchy dual} of $T$.
Recall that the range of a left-invertible operator is
always closed. It is easily seen that if $T$ is
left-invertible, then $T'$ is again left-invertible
and the following conditions hold{\em :}
   \begin{gather}  \label{tpr0}
(T')'=T,
   \\ \label{tpr}
T^*T'=I,
   \\ \label{tpr2}
T'T^* \text{ is the orthogonal projection of $\hh$
onto } T(\hh),
   \\  \label{tpr3}
T'^*T'=(T^*T)^{-1}.
   \end{gather}
The notion of the Cauchy dual operator has been
introduced and studied by Shimorin in the context of
the wandering subspace problem for Bergman-type
operators \cite{Sh}. The Cauchy dual technique has
been employed in \cite{Ch-0} to prove Berger-Shaw-type
theorems for $2$-hyperexpansive (or in Shimorin's
terminology, concave) operators. It is also important
to note that
   \begin{align} \label{2hypcon}
   \begin{minipage}{70ex}
{\em if $T\in \B(\hh)$ is a $2$-isometry $($or, more
generally, a $2$-hyperexpansive operator$)$, then $T$
is left-invertible and $T'$ is a contraction.}
   \end{minipage}
   \end{align}
Indeed, by \cite[Lemma 1]{R-0}, we have $\|Tf\| \Ge
\|f\|$ for all $f\in \hh$, which implies that $T$ is
left-invertible and $\|T^{\prime}\| = \|U |T|^{-1}\|
\Le 1$, where $T=U|T|$ is the polar decomposition of
$T$. The map that sends $T$ to $T'$ links
$2$-hyperexpansive operators to hyponormal ones (see
\cite[Sect.\ 5]{Sh-1} and \cite[Theorem~ 2.9]{Ch-0}).
Moreover, if $T$ is a $2$-hyperexpansive operator,
then $T^{\prime}$ is power hyponormal, i.e., all
positive integer powers $T'^n$ of $T'$ are hyponormal
(see \cite[Theorem~ 3.1]{Ch-1}). What is more
interesting, if $T$ is a completely hyperexpansive
unilateral weighted shift, then $T'$ is a subnormal
contraction (see \cite[Proposition~ 6]{At}). This
leads to the question originally posed in
\cite[Question 2.11]{Ch-0}: is the Cauchy dual of a
completely hyperexpansive operator a subnormal
contraction? Here we consider the following version of
this problem.
   \begin{opq*}
Is the Cauchy dual of a $2$-isometry a subnormal
contraction?
   \end{opq*}
In this paper we solve the above problem and
\cite[Question 2.11]{Ch-0} in the negative (see
Examples~ \ref{glowny} and \ref{przadj}). Since, by
\cite[Theorem~ 3.1]{Ch-1}, the Cauchy dual of a
$2$-isometry is power hyponormal, we get also examples
of non-subnormal power hyponormal operators (cf.\
\cite{Cu-Put0,Cu-Put}). The considerations in
\cite{A-C} related to the above problem enabled the
first two authors to solve negatively the problem of
subnormality of module tensor product of two subnormal
modules posed by Salinas in~ 1988.

Using \eqref{2hypcon} and \cite[Corollary]{Emb}, we
can reformulate the Cauchy dual subnormality problem
as follows: is the sequence $\{T^{\prime * n}
T'^n\}_{n=0}^{\infty}$ an operator Hausdorff moment
sequence for a $2$-isometry $T \in \B(\hh)$? Hence, it
is important to determine the exact form of the
operator Hausdorff moment sequence $\{T^{\prime * n}
T'^n\}_{n=0}^{\infty}$ if $T^{\prime}$ is subnormal.

It is worth mentioning that the question of
subnormality of $2$-isometric operators has a simple
solution. This is due to the following more general
result which is a direct consequence of \cite[Lemma
1]{R-0}: if $T\in \B(\hh)$ is a subnormal $($or, more
generally, a normaloid\/$)$ operator which is
$2$-hyperexpansive, then $T$ is an isometry.

For future reference, we explicitly state two
celebrated criteria for subnormality of bounded
operators that are due to Lambert and Agler,
respectively. It is worth pointing out that in view of
the Hausdorff moment theorem (see \cite[Theorem~
4.6.11]{B-C-R}), these two results are equivalent.
   \begin{theorem}[\mbox{\cite{lam}, \cite[Proposition~
2.3]{St1}}] \label{Lam}
   An operator $S\in \B(\hh)$ is subnormal if and only
if for every $f\in \hh,$ the sequence $\{\|S^n
f\|^2\}_{n=0}^{\infty}$ is a Stieltjes moment
sequence, i.e., there exists a positive Borel measure
$\mu_f$ on $[0,\infty)$ such that
   \begin{align*}
\|S^n f\|^2 = \int_{[0,\infty)} t^n d \mu_f(t), \quad
n=0,1,2, \ldots.
   \end{align*}
   \end{theorem}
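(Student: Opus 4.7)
The plan is to deduce Lambert's criterion from Agler's Theorem \ref{Ag-Emb} through the Hausdorff moment theorem, the very route flagged by the authors just before the statement.

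For the forward direction, I would pick a normal extension $N \in \B(\kk)$ of $S$ with $\hh \subseteq \kk$. Normality of $N$ yields $\|S^n f\|^2 = \|N^n f\|^2 = \langle (N^*N)^n f, f\rangle$ for every $f \in \hh$. Since $N^*N \ge 0$ is selfadjoint with spectrum contained in $[0,\|N\|^2]$, the scalar spectral measure $\mu_f := \langle F(\cdot) f, f\rangle$ of $N^*N$ is a finite Borel measure on $[0,\infty)$ satisfying $\|S^n f\|^2 = \int t^n\, d\mu_f(t)$, as required.

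For the converse I would first scale to a contraction. The trivial case $S=0$ aside, set $T := S/\|S\|$; then $\|T^n f\|^2 = \int s^n\, d\nu_f(s)$, where $\nu_f$ is the pushforward of $\mu_f$ under $t \mapsto t/\|S\|^2$. Because $\|T^n f\|^2 \Le \|f\|^2$ uniformly in $n$, a brief support argument pins $\supp{\nu_f}$ inside $[0,1]$: otherwise $\nu_f((1+\eps,\infty)) > 0$ for some $\eps > 0$ would force $\int s^n\, d\nu_f \Ge (1+\eps)^n \nu_f((1+\eps,\infty))$ to grow without bound, contradicting the uniform bound. With $\nu_f$ concentrated on $[0,1]$, a direct calculation using the identity $(1-t)^m = \sum_{k=0}^m (-1)^k \binom{m}{k} t^k$ yields
\[
\langle B_m(T) f, f\rangle \;=\; \sum_{k=0}^m (-1)^k \binom{m}{k} \|T^k f\|^2 \;=\; \int_{[0,1]} (1-t)^m\, d\nu_f(t) \;\Ge\; 0
\]
for every $m \Ge 1$ and $f \in \hh$, so $B_m(T) \Ge 0$ for all $m$. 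Agler's theorem (Theorem \ref{Ag-Emb}) then identifies $T$, and hence $S$, as subnormal.

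The only mildly delicate step is the measure-support argument pinning $\nu_f$ to $[0,1]$; once it is secured, the reduction to Agler's criterion and the conclusion of subnormality are automatic. An alternative route avoiding the rescaling would invoke the Hausdorff moment theorem \cite[Theorem 4.6.11]{B-C-R} directly to equate the Stieltjes condition (under the uniform moment bound) with the positivity of the finite differences $B_m$, but either way the core content is the same equivalence between moment positivity and operator positivity.
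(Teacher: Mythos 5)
Your argument is correct, but note that the paper does not actually prove Theorem \ref{Lam}: it is quoted as a known result with references to \cite{lam} and \cite[Proposition 2.3]{St1}, the authors only remarking that Theorems \ref{Ag-Emb} and \ref{Lam} are equivalent via the Hausdorff moment theorem. What you supply is a correct, self-contained realization of one half of that remark. The forward implication via the scalar spectral measure $\langle F(\cdot)f,f\rangle$ of $N^*N$ (using $N^{*n}N^n=(N^*N)^n$ for normal $N$) is sound; in the converse, the rescaling to $T=S/\|S\|$, the support argument forcing $\nu_f$ onto $[0,1]$ (which is precisely the passage from a Stieltjes moment sequence with uniformly bounded moments to a Hausdorff one), and the identity $\langle B_m(T)f,f\rangle=\int_{[0,1]}(1-t)^m\,d\nu_f(t)\Ge 0$ correctly reduce the claim to Agler's criterion. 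This route makes Lambert's criterion a corollary of Agler's theorem, whereas Lambert's original proof in \cite{lam} is independent of Agler's characterization; the trade-off is that your proof is only as self-contained as Theorem \ref{Ag-Emb}, which is likewise quoted without proof. The only housekeeping points are the ones you already flag: the case $S=0$ must be set aside before dividing by $\|S\|$, and one should note that a normal extension of $T$ rescales to a normal extension of $S=\|S\|T$, so subnormality passes back to $S$.
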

   \begin{theorem}[\mbox{\cite[Theorem~ 3.1]{Ag}}] \label{Ag-Emb}
   An operator $S\in \B(\hh)$ is a subnormal
contraction if and only if $B_m(S) \Ge 0$ for all $m=
1,2,3, \ldots$.
   \end{theorem}
It is also of some interest to reveal the following
intimate connection of the Cauchy dual subnormality
problem to the theory of Toeplitz matrices.
   \begin{proposition}
Let $T \in \B(\hh)$ be a $2$-isometry such that for
all $h \in \hh$ and $k\in\{0,1,2, \ldots\}$, the
Toeplitz matrix $[\inp{L_{j-i}h}{h}]_{i,j=0}^{k}$ is
positive definite, where
   \begin{align*}
L_n :=
   \begin{cases}
(T^{n}T'^{n})^* & \text{if $n$ is a nonnegative
integer},
   \\
T^{|n|}T'^{|n|} & \text{if $n$ is a negative integer}.
   \end{cases}
   \end{align*}
Then the Cauchy dual $T'$ of $T$ is subnormal.
   \end{proposition}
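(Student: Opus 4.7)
My plan is to convert the trigonometric positive-definiteness of $\{\inp{L_n h}{h}\}_{n \in \zbb}$ (which is what the Toeplitz hypothesis encodes) into a Stieltjes moment representation of $\{\|T'^n h\|^2\}_{n \ge 0}$, so that Theorem \ref{Lam} will yield subnormality of $T'$, with \eqref{2hypcon} promoting it to a subnormal contraction.

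First I would record the simple symmetries $L_0 = I$ and $L_{-n} = L_n^*$ for every $n \in \zbb$, which follow directly from $(T^n T'^n)^* = T'^{*n}T^{*n}$. Combined with the Toeplitz hypothesis, this is exactly the classical positive-definiteness of the scalar sequence $n\mapsto \inp{L_n h}{h}$ on $\zbb$ for each fixed $h \in \hh$. Invoking the Herglotz--Bochner theorem produces, for each such $h$, a positive Borel measure $\mu_h$ on $\T$ with $\mu_h(\T) = \|h\|^2$ and
\[
\inp{L_n h}{h} = \int_\T z^n\,d\mu_h(z), \qquad n \in \zbb.
\]

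Next, iterating \eqref{tpr} gives $T^{*m}T'^n = T'^{n-m}$ for $0 \le m \le n$. Applying the Herglotz identity above to $g = T'^n h$ in place of $h$ and unwinding yields
\[
\inp{T'^{n-m}h}{T'^{n+m}h} \;=\; \inp{L_m T'^n h}{T'^n h} \;=\; \int_\T z^m \, d\mu_{T'^n h}(z) \qquad (0 \le m \le n).
\]
In particular $\|T'^n h\|^2 = \mu_{T'^n h}(\T)$, and the family $\{\mu_{T'^n h}\}_{n\ge 0}$ carries enough information to detect all bilinear forms $\inp{T'^k h}{T'^l h}$ with $k+l$ even.

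The main obstacle will be the last step: reconciling the family $\{\mu_{T'^n h}\}_{n\ge 0}$ of trigonometric measures on $\T$ into a single positive Borel measure $\nu_h$ on $[0,1]$ satisfying
\[
\|T'^n h\|^2 = \int_{[0,1]} t^n\,d\nu_h(t), \qquad n \ge 0.
\]
This is not a formal consequence of Toeplitz positivity alone (elementary bilateral positive-definite sequences on $\zbb$ need not yield Stieltjes moments on $[0,1]$); the crucial extra ingredient should be the 2-isometric rigidity $T^{*n}T^n = I + n(T^*T - I)$, which forces the necessary compatibility among the $\mu_{T'^n h}$'s. Once $\nu_h$ is in hand, Theorem \ref{Lam} shows that $T'$ is subnormal, and \eqref{2hypcon} promotes this to a subnormal contraction, completing the proof.
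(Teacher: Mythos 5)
Your preliminary reductions are correct: $L_0=I$, $L_{-n}=L_n^*$, the Herglotz--Bochner representation of $n\mapsto\inp{L_nh}{h}$, the identity $T^{*m}T'^n=T'^{n-m}$ coming from \eqref{tpr}, and the resulting formula $\inp{T'^{n-m}h}{T'^{n+m}h}=\int_{\T}z^{m}\,d\mu_{T'^nh}(z)$. But the argument stops exactly where the content of the proposition begins. The passage from the family $\{\mu_{T'^nh}\}_{n\ge 0}$ of measures on $\T$ to a single Stieltjes representing measure for $\{\|T'^nh\|^2\}_{n\ge 0}$ is the whole theorem, and you do not supply it: you only assert that the $2$-isometric identity $T^{*n}T^n=I+n(T^*T-I)$ ``should'' force the required compatibility. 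Note the structural mismatch you would have to overcome: Lambert's criterion (Theorem \ref{Lam}) requires positivity of the \emph{Hankel} forms $[\|T'^{i+j}h\|^2]_{i,j}$, whereas the hypothesis and your Herglotz measures only control the \emph{Toeplitz}-type quantities $\inp{T'^{n-m}h}{T'^{n+m}h}$; there is no formal way to pass from one to the other, and the $2$-isometry identity constrains $T^{*n}T^n$, not $T'^{*n}T'^n$, so it does not obviously close this gap.

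The paper's proof takes a different and much shorter route: since $T^*T'=I$ gives $T^{*n}T'^n=I$ by induction, one has $(T'^n)^*=(T^nT'^n)^*T'^n=L_nT'^n$ for all $n\ge 0$, and the Toeplitz hypothesis is then precisely the hypothesis of Trent's subnormality criterion \cite[Theorem 1]{T} applied to the operator $T'$ with the sequence $\{L_n\}_{n\in\zbb}$ (the $2$-isometry assumption enters only to guarantee left-invertibility, so that $T'$ exists). In effect you are attempting to reprove Trent's theorem from scratch and halt at its hardest step. To repair the argument you would either have to invoke \cite[Theorem 1]{T} directly (at which point the Herglotz analysis becomes unnecessary) or actually construct the representing measure or normal extension from the Toeplitz positivity, which is a substantial piece of work not present in the proposal.
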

   \begin{proof} It follows from \eqref{tpr} that
   \begin{align*}
(T'^n)^* = (T^{n}T'^{n})^* T'^n, \quad n =
0,1,2,\ldots,
   \end{align*}
which together with \cite[Theorem~ 1]{T} completes the
proof.
   \end{proof}
The remainder of the paper is organized as follows. In
Section~ \ref{Sec3} we give a purely algebraic
characterization of $2$-isometries satisfying the
kernel condition (see Lemma~ \ref{char-1}) and provide
a model for such operators built on operator valued
unilateral weighted shifts (see Theorem~ \ref{model}).
We also give a brief discussion of the case of cyclic
analytic $2$-isometries satisfying the kernel
condition (see Proposition~ \ref{2isokk}). In Section~
\ref{Sec6}, we prove that the Cauchy dual $T^{\prime}$
of a $2$-isometry $T$ satisfying the kernel condition
is a subnormal contraction (see Theorem~
\ref{cdsubn}). Theorem~ \ref{cdsubn} is not true for
$m$-isometries for $m\Ge 3$ (see Example~
\ref{treiso}). Section~ \ref{Sec7} deals with
quasi-Brownian isometries, a subclass of
$2$-isometries, which generalize Brownian isometries
introduced by Agler and Stankus in \cite{Ag-St}. Using
a block operator model for this class of operators
given in \cite{Maj}, we show that the Cauchy dual
$T^\prime$ of a quasi-Brownian isometry $T$ is a
subnormal contraction (see Theorem~ \ref{BrownianG}).
We also show that a quasi-Brownian isometry satisfying
the kernel condition must be an isometry (see
Corollary~ \ref{binkc}). In Section~ \ref{Sec8}, we
collect selected properties of weighted shifts on
directed trees that we need for further
considerations. This class of operators was introduced
in \cite{JJS} and intensively studied since then (see
e.g., \cite{JJS-1,Geh,C-T,B-D-P,BJJS-aim,M-A,K-L-P}).
We show, among other things, that the Cauchy dual of a
left-invertible weighted shift on a directed tree is
still a weighted shift on a directed tree (see Lemma~
\ref{cisws}). We also prove that $2$-isometric
weighted shifts on rootless directed trees with
nonzero weights that satisfy the kernel condition and
Brownian isometric weighted shifts on rooted directed
trees are automatically isometric (see Propositions~
\ref{bilat} and \ref{briai}). In Section~ \ref{Sec10},
we prove a reconstruction theorem stating that if
$\slam$ is a $2$-isometric weighted shift on a rooted
directed tree with nonzero weights that satisfies the
perturbed kernel condition \eqref{hypok} for some
positive integer $k$, then $\slam^\prime$ is subnormal
if and only if $\slam$ satisfies the kernel condition
(see Theorem~ \ref{main2}). The reconstruction theorem
enables us to answer the Cauchy dual subnormality
problem in the negative (see Example~ \ref{glowny}).
In Section~ \ref{Sec11}, we investigate the Cauchy
dual subnormality problem for adjacency operators of
directed trees. This class of operators plays an
important role in graph theory (see
\cite{b-m-s,f-s-w,JJS}). We show that if a rooted
directed tree satisfies certain degree constraints and
the corresponding adjacency operator $\slamj$ is a
$2$-isometry, then the Cauchy dual operator
$\slamj^\prime$ of $\slamj$ is subnormal (Theorem~
\ref{constant-t}). This enables us to construct a
$2$-isometric adjacency operator $\slamj$ of a
directed tree, which does not satisfy the kernel
condition, which is not a quasi-Brownian isometry, but
which has the property that $\slamj^\prime$ is a
subnormal contraction (see Example~ \ref{nbnkcsub}).
Finally, we show that the Cauchy dual subnormality
problem has a negative solution in the class of
adjacency operators of directed trees (see Example~
\ref{przadj}).

Now we fix notation and terminology. Let $\zbb$,
$\rbb$ and $\C$ stand for the sets of integers, real
numbers and complex numbers, respectively. Denote by
$\nbb$, $\zbb_+$ and $\rbb_+$ the sets of positive
integers, nonnegative integers and nonnegative real
numbers, respectively. Given a set $X$, we write
$\chi_{\varDelta}$ for the characteristic function of
a subset $\varDelta$ of $X$. The $\sigma$-algebra of
all Borel subsets of a topological space $X$ is
denoted by $\borel{X}$. For $a\in \rbb,$ $\delta_a$
stands for the Borel probability measure on $\rbb$
supported on $\{a\}$. In this paper, Hilbert spaces
are assumed to be complex and operators are assumed to
be linear. Let $\hh$ be a Hilbert space. As usual, we
denote by $\dim \hh$ the orthogonal dimension of
$\hh$. If $f \in \hh$, then $\langle f \rangle$ stands
for the linear span of the singleton of $f$. Given
another Hilbert space $\kk$, we denote by
$\B(\hh,\kk)$ the Banach space of all bounded
operators from $\hh$ to $\kk$. The kernel and the
range of an operator $T \in \B(\hh,\kk)$ are denoted
by $\ker T$ and $\ran T$, respectively. We abbreviate
$\B(\hh,\hh)$ to $\B(\hh)$ and regard $\B(\hh)$ as a
$C^*$-algebra. The identity operator on $\hh$ is
denoted by $I_\hh$ (or simply by $I$ if no ambiguity
arises). We write $\sigma(T)$ for the spectrum of
$T\in \B(\hh)$. If $S$ and $T$ are Hilbert space
operators which are unitarily equivalent, then we
write $S \cong T$.

Following \cite{R-1}, we say that an operator $T\in
\B(\hh)$ is {\em analytic} if $\bigcap_{n=1}^{\infty}
T^n(\hh)=\{0\}$. An operator $T\in\B(\hh)$ is said to
be {\em completely non-unitary} (resp., {\em pure}) if
there is no nonzero reducing closed vector subspace
$\mathcal L$ of $\hh$ such that the restriction
$T|_{\mathcal L}$ of $T$ to $\mathcal L$ is a unitary
(resp., a normal\/) operator. Clearly, every analytic
operator is completely non-unitary. Recall that any
operator $T\in \B(\hh)$ has a unique orthogonal
decomposition $T=N\oplus R$ such that $N$ is a normal
operator and $R$ is a pure operator (see
\cite[Corollary~ 1.3]{Mo}). We shall refer to $N$ and
$R$ as the {\em normal} and {\em pure} parts of $T$,
respectively.
   \section{\label{Sec3}A model for $2$-isometries
satisfying the kernel condition}
   The goal of this section is to show that a
non-unitary $2$-isometry satisfying the kernel
condition is unitarily equivalent to an orthogonal sum
of a unitary operator and an operator valued
unilateral weighted shift (see Theorem~ \ref{model}).

We say that an operator $T\in \B(\hh)$ satisfies the
{\em kernel condition} if
   \begin{align}  \label{kc}
T^*T (\ker T^*) \subseteq \ker T^*.
   \end{align}
By the square root lemma (see \cite[Theorem~
2.4.4]{Sim-4}), \eqref{kc} holds if and only if
   \begin{align*}
|T| (\ker T^*) \subseteq \ker T^*.
   \end{align*}
It is easily seen that any positive integral power of
a unilateral weighted shift satisfies the kernel
condition (see also the proof of Corollary~
\ref{nthpow}). It is a routine matter to verify that
weighted translation semigroups studied by Embry and
Lambert in \cite{EL} consist of operators satisfying
the kernel condition. Other examples of operators
satisfying the kernel condition will appear in this
paper when solving the Cauchy dual subnormality
problem.

The kernel condition is preserved by the operation of
taking the Cauchy dual.
   \begin{proposition} \label{kc-Cu}
Let $T\in \B(\hh)$ be a left-invertible operator. Then
the following conditions are equivalent{\em :}
   \begin{enumerate}
   \item[(i)] $T$ satisfies the kernel condition,
   \item[(ii)] $T^*T(\ker T^*) = \ker T^*,$
   \item[(iii)] $T'$ satisfies the kernel condition.
   \end{enumerate}
   \end{proposition}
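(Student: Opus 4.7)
The plan is to reduce everything to one elementary observation about self-adjoint invertible operators, and then identify the relevant quantities for $T'$ using the formulas \eqref{tpr}--\eqref{tpr3} already established in the Introduction.

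The crux is the following \emph{auxiliary fact}: if $S\in\B(\hh)$ is self-adjoint and invertible in $\B(\hh)$, and if $M$ is a closed subspace of $\hh$ with $S(M)\subseteq M$, then $S(M)=M$ and also $S^{-1}(M)=M$. To prove it, observe that self-adjointness of $S$ together with invariance of $M$ forces invariance of $M^{\perp}$, so $S$ decomposes as a block-diagonal sum $S = S_1 \oplus S_2$ relative to $\hh = M \oplus M^{\perp}$. Each block is self-adjoint, and invertibility of $S$ in $\B(\hh)$ forces invertibility of each $S_i$ on its space; hence $S_1(M)=M$ and $S_1^{-1}(M)=M$. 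This instantly gives the equivalence (i)$\Leftrightarrow$(ii) by applying the auxiliary fact with $S=T^*T$ (which is invertible in $\B(\hh)$ by left-invertibility of $T$) and $M=\ker T^*$.

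For (i)$\Leftrightarrow$(iii), I first compute the ingredients that appear in the kernel condition for $T'$. From $T' = T(T^*T)^{-1}$ and self-adjointness of $(T^*T)^{-1}$, one has
\begin{align*}
T'^{*} = (T^*T)^{-1}T^*,
\end{align*}
and since $(T^*T)^{-1}$ is injective, this yields $\ker T'^{*} = \ker T^*$. Combined with the identity $T'^{*}T' = (T^*T)^{-1}$ from \eqref{tpr3}, the kernel condition for $T'$ is exactly the inclusion $(T^*T)^{-1}(\ker T^*) \subseteq \ker T^*$. Applying the auxiliary fact to $S = T^*T$ and $M = \ker T^*$ now shows that this inclusion is equivalent to $T^*T(\ker T^*) \subseteq \ker T^*$, which is (i).

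There is no real obstacle here; the whole argument hinges on noticing that the kernel condition for a left-invertible operator is a statement about the invariance of a closed subspace under a self-adjoint invertible operator, at which point passing between $T^*T$ and its inverse is automatic. The only small bookkeeping point is to verify that $\ker T'^{*} = \ker T^*$, which follows at once from the formula $T'^{*} = (T^*T)^{-1}T^*$.
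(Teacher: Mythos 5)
Your proposal is correct and follows essentially the same route as the paper: the paper's proof also rests on the observation \eqref{factaa} (your auxiliary fact) applied to $T^*T$ and $\ker T^*$ for (i)$\Leftrightarrow$(ii), and on \eqref{tpr3} together with $\ker T'^*=\ker T^*$ for (ii)$\Leftrightarrow$(iii). You merely supply more of the routine details (the block-diagonal proof of the auxiliary fact and the explicit computation $T'^*=(T^*T)^{-1}T^*$) that the paper leaves implicit.
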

   \begin{proof}
The equivalence (i)$\Leftrightarrow$(ii) is a
consequence of the following fact.
   \begin{align} \label{factaa}
   \begin{minipage}{72ex}
{\em If $A\in \B(\hh)$ is a selfadjoint operator which
is invertible in $\B(\hh)$ and $\mathcal L$ is a
closed vector subspace of $\hh$ which is invariant for
$A$, then $A(\mathcal L)=\mathcal L.$}
   \end{minipage}
   \end{align}
This together with \eqref{tpr0}, \eqref{tpr3} and the
equation $\ker T'^* = \ker T^*$ yields
(ii)$\Leftrightarrow$(iii).
   \end{proof}
The next result, whose proof is left to the reader,
shows that under some circumstances the Cauchy dual of
a restriction of a left-invertible operator to its
invariant subspace is equal to the restriction of the
Cauchy dual operator.
   \begin{proposition} \label{restrcd}
Suppose that $T \in \B(\hh)$ is a left-invertible
operator and $\mathcal L$ is a closed vector subspace
of $\hh$ such that $T(\mathcal L) \subseteq \mathcal
L$ and $T^*T(\mathcal L) \subseteq \mathcal L$. Then
$T|_{\mathcal L}$ is left-invertible,
$T^{\prime}(\mathcal L) \subseteq \mathcal L$,
$T^{\prime *}T^{\prime}(\mathcal L) \subseteq \mathcal
L$ and $(T|_{\mathcal L})^{\prime} =
T^{\prime}|_{\mathcal L}$. In particular, if $\mathcal
L$ reduces $T$, then $\mathcal L$ and $\mathcal
L^{\perp}$ reduce $T'$ and $T^{\prime}= (T|_{\mathcal
L})^{\prime} \oplus (T|_{\mathcal
L^{\perp}})^{\prime}$.
   \end{proposition}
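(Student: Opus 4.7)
The plan is to reduce everything to the fact \eqref{factaa} already established in the proof of Proposition~\ref{kc-Cu}, which asserts that a selfadjoint operator invertible in $\B(\hh)$ that leaves a closed subspace invariant in fact maps it onto itself. Here this will be applied to $A := T^*T$, which is selfadjoint and invertible in $\B(\hh)$ by left-invertibility of $T$, and to the subspace $\mathcal L$, which is invariant under $T^*T$ by hypothesis. This yields $T^*T(\mathcal L)=\mathcal L$, and consequently $(T^*T)^{-1}(\mathcal L)=\mathcal L$.

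The next step is to identify the adjoint of $S:=T|_{\mathcal L}$ as an operator in $\B(\mathcal L)$. A routine inner-product computation gives $S^* = P_{\mathcal L} T^*|_{\mathcal L}$, where $P_{\mathcal L}$ is the orthogonal projection onto $\mathcal L$. Combining this with the hypothesis $T^*T(\mathcal L)\subseteq\mathcal L$ produces
\[
S^*S = P_{\mathcal L}T^*T|_{\mathcal L} = T^*T|_{\mathcal L}.
\]
By the preceding paragraph, this operator on $\mathcal L$ is a bijection, hence invertible in $\B(\mathcal L)$, which shows that $S$ is left-invertible. Moreover, since $(T^*T)^{-1}(\mathcal L)=\mathcal L$, the inverse of $S^*S$ coincides with $(T^*T)^{-1}|_{\mathcal L}$.

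The Cauchy dual identities then fall out directly. Indeed,
\[
(T|_{\mathcal L})^{\prime} = S(S^*S)^{-1} = T|_{\mathcal L}\cdot (T^*T)^{-1}|_{\mathcal L} = T(T^*T)^{-1}|_{\mathcal L} = T^{\prime}|_{\mathcal L},
\]
from which $T^{\prime}(\mathcal L)\subseteq\mathcal L$ is immediate; the inclusion $T^{\prime *}T^{\prime}(\mathcal L)\subseteq\mathcal L$ follows from \eqref{tpr3} together with $(T^*T)^{-1}(\mathcal L)=\mathcal L$. For the final clause, if $\mathcal L$ reduces $T$ then $T^*(\mathcal L)\subseteq\mathcal L$, so a fortiori $T^*T(\mathcal L)\subseteq\mathcal L$, which places us under the hypotheses already proved; it remains to check $T^{\prime *}(\mathcal L)\subseteq\mathcal L$, which is clear from $T^{\prime *}=(T^*T)^{-1}T^*$ and the two invariances just noted.

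The proof is essentially bookkeeping; the only conceptual point is the passage from the invariance $T^*T(\mathcal L)\subseteq\mathcal L$ to the equality $T^*T(\mathcal L)=\mathcal L$, which is precisely where \eqref{factaa} is indispensable and which allows $(T^*T)^{-1}|_{\mathcal L}$ to be interpreted as $(T^*T|_{\mathcal L})^{-1}$.
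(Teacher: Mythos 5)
The paper leaves the proof of Proposition \ref{restrcd} to the reader, so there is no official argument to compare against; your proof is correct and complete. The key step -- upgrading the invariance $T^*T(\mathcal L)\subseteq\mathcal L$ to the equality $T^*T(\mathcal L)=\mathcal L$ via \eqref{factaa}, so that $(T^*T)^{-1}|_{\mathcal L}$ is a well-defined operator on $\mathcal L$ equal to $(T^*T|_{\mathcal L})^{-1}$ -- is exactly the point the authors rely on elsewhere (e.g.\ in Proposition \ref{kc-Cu}), and the identification $S^*S=P_{\mathcal L}T^*T|_{\mathcal L}=T^*T|_{\mathcal L}$ together with \eqref{tpr3} disposes of all remaining claims, including the reducing-subspace case via $T'^*=(T^*T)^{-1}T^*$.
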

Note that in general the assumptions of Proposition~
\ref{restrcd} do not imply that $\mathcal L$ reduces
$T$. Indeed, it is enough to consider an isometric
unilateral shift $V$ of multiplicity $1$ and any of
its nontrivial closed invariant vector subspaces, say
$\mathcal L$. Then $\mathcal L$ does not reduce $V$.
This is due to the fact that $V$ is irreducible, i.e.,
there is no nontrivial closed vector subspace of $\hh$
which reduces $V$ (see \cite[Corollary~ 2, p.\
63]{Shi}). Recall that the Beurling theorem completely
describes the lattice of all closed vector subspaces
of $\hh$ which are invariant for $V$ (see
\cite[Theorem~ 17.21]{Rud}).

   Now we give some purely algebraic characterizations
of $2$-isometries satisfying the kernel condition.
   \begin{lemma} \label{char-1}
Let $T \in \B(\hh)$ be a left-invertible operator.
Then the following conditions are equivalent{\em :}
   \begin{enumerate}
   \item[(i)] $T$ is a $2$-isometry  such that $T^*T (\ker T^*)
\subseteq \ker T^*,$
   \item[(ii)] $T$ is a $2$-isometry such that $T^*T(\ran T)
\subseteq \ran T,$
   \item[(iii)] $T' -2T + T^*T^2 =0,$
   \item[(iv)] $(T^*T^2 T^*-2TT^* + I)T=0,$
   \item[(v)] $T'(T^*T-I)=(T^*T-I)T.$
   \end{enumerate}
   \end{lemma}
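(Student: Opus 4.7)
The plan is to establish the chain of implications by exploiting three ingredients: (a) the elementary identity $\ker T^* = (\ran T)^\perp$ combined with the general fact \eqref{factaa} about invertible selfadjoint operators, (b) the defining identities \eqref{tpr}, \eqref{tpr2}, \eqref{tpr3} of the Cauchy dual, and (c) the fact that $T^*T$ is invertible in $\B(\hh)$ for left-invertible $T$. I would arrange the argument as (i)$\Leftrightarrow$(ii), then (iii)$\Leftrightarrow$(iv)$\Leftrightarrow$(v), and finally (ii)$\Leftrightarrow$(iii).

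For (i)$\Leftrightarrow$(ii): since $T$ is left-invertible, $\ran T$ is closed and $\ker T^* = (\ran T)^\perp$. The operator $A := T^*T$ is selfadjoint and invertible in $\B(\hh)$, so if a closed subspace $\mathcal L$ is $A$-invariant, then $\mathcal L^\perp$ is $A$-invariant as well (by selfadjointness). Applying this to $\mathcal L = \ran T$ and $\mathcal L = \ker T^*$ gives the equivalence of the two invariance conditions; the $2$-isometry hypothesis then carries through. For (iii)$\Leftrightarrow$(iv), I would simply right-multiply (iii) by the invertible operator $T^*T$, using $T'T^*T = T$ to produce (iv), and reverse the step by right-multiplying (iv) by $(T^*T)^{-1}$. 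For (iii)$\Leftrightarrow$(v), note that \eqref{tpr2} gives $T'T^*T = P_{\ran T}T = T$, so (v) expands to $T - T' = T^*T^2 - T$, which rearranges precisely to (iii).

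For (iii)$\Rightarrow$(ii): first apply $T^*$ on the left of (iii) and use \eqref{tpr} to obtain $I - 2T^*T + T^{*2}T^2 = 0$, so $T$ is a $2$-isometry. Rewriting (iii) as $T^*T\cdot Th = T^*T^2h = 2Th - T'h$, and noting that both $2Th$ and $T'h = T(T^*T)^{-1}h$ lie in $\ran T$, we conclude $T^*T(\ran T) \subseteq \ran T$. For the converse (ii)$\Rightarrow$(iii), observe that left-invertibility of $T$ means $L := (T^*T)^{-1}T^*$ is a left inverse, and $L$ agrees with the inverse of $T$ restricted to $\ran T$. The hypothesis (ii) says $T^*T^2h \in \ran T$, so $T^*T^2h = T\bigl(L\cdot T^*T^2h\bigr) = T(T^*T)^{-1}T^{*2}T^2h$. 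Using the $2$-isometry identity $T^{*2}T^2 = 2T^*T - I$, this becomes $T(T^*T)^{-1}(2T^*T - I)h = 2Th - T'h$, which is exactly (iii).

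The only subtle point is the derivation in the last paragraph, where one must recognize how to invert $T$ on its range via $L = (T^*T)^{-1}T^*$ and then substitute the $2$-isometry identity at the right moment; everything else is routine algebra using \eqref{tpr}--\eqref{tpr3}. I expect no genuine obstacle beyond keeping this bookkeeping tidy.
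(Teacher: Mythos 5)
Your proof is correct and follows essentially the same route as the paper's: the equivalence (i)$\Leftrightarrow$(ii) via selfadjointness of $T^*T$ and closedness of $\ran T$, the identities $T^*T'=I$, $T'T^*T=T$ and $T'T^*=P_{\ran T}$ doing all the algebraic work, and the $2$-isometry identity $T^{*2}T^2=2T^*T-I$ substituted at the same point. The only (harmless) organizational difference is that you obtain (iii)$\Leftrightarrow$(v) directly from the substitution $T'T^*T=T$, which is slightly slicker than the paper's separate (v)$\Rightarrow$(ii) argument but rests on the same observations.
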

   Observe that, by \eqref{factaa}, Lemma~
\ref{char-1} remains true if inclusions appearing in
(i) and (ii) are replaced by equalities.
   \begin{proof}[Proof of Lemma~ \ref{char-1}]
Set $\triangle_T=T^*T-I$ and $\nabla_T=T' T'^*-2I
+T^*T$. Since, by \eqref{tpr}, $T'^*T=I$ we see that
$\nabla_T T = T' -2T + T^*T^2$ and $T^* \nabla_T T =
I-2T^*T+{T^*}^2T^2$. This implies that
   \begin{enumerate}
   \item[(a)] the equation (iii) is equivalent to $\nabla_T T=0,$
   \item[(b)] $T$ is a $2$-isometry if and only if $T^*\nabla_T T =0,$
   \item[(c)] $T$ is a $2$-isometry if and only if
$T^*\triangle_T T=\triangle_T.$
   \end{enumerate}

(i)$\Leftrightarrow$(ii) This is obvious because $\ran
T$ is closed.

(ii)$\Rightarrow$(iii) Assume that (ii) holds. Since
by assumption $\ran(\nabla_T T) \subseteq \ran{T}$
and, by \eqref{tpr2}, $P:=T'T^*$ is the orthogonal
projection of $\hh$ onto $\ran T$, we get
   \begin{align*}
\nabla_T T =P\nabla_T T= T'(T^*\nabla_T T)
\overset{\rm (b)}= 0.
   \end{align*}
Hence, by (a), the equality (iii) holds.

(iii)$\Leftrightarrow$(iv) This can be deduced from
the definition of $T'.$

(iii)$\Rightarrow$(v) If (iii) holds, then
   \begin{align*}
T'(T^*T-I) = T - T' \overset{\mathrm{(iii)}}= T^*T^2 -
T = (T^*T-I)T,
   \end{align*}
which gives (v).

(v)$\Rightarrow$(ii) Suppose (v) holds. Note that
   \begin{align*}
\triangle_T \overset{\eqref{tpr}} = T^*T'\triangle_T
\overset{\mathrm{(v)}} = T^*\triangle_TT,
   \end{align*}
and thus, by (c), $T$ is a $2$-isometry. It suffices
to check that $\triangle_T(\mbox{ran} ~T) \subseteq
\mbox{ran} ~T$. As above $P=T'T^*$. Since $\ran(T')
\subseteq \ran ~ T$, we have
   \begin{align*}
T'\triangle_T \overset{\eqref{tpr2}} = PT'\triangle_T
\overset{\mathrm{(v)}}= P\triangle_TT.
   \end{align*}
This implies that
   \begin{align*}
(I-P)\triangle_TT=\triangle_T T - T'\triangle_T
\overset{\mathrm{(v)}} =0.
   \end{align*}
Hence $P\triangle_TT=\triangle_TT$ and thus
$\triangle_T(\mbox{ran} ~T) \subseteq \mbox{ran} ~T$.
This completes the proof.
   \end{proof}
Below, we collect some properties (whose verifications
are left to the reader) of the sequence
$\{\xi_n\}_{n=0}^{\infty}$ of self-maps of the
interval $[1,\infty)$ given by
   \begin{align} \label{xin}
\xi_n(x) = \sqrt{\frac{1+ (n+1)(x^2-1)}{1+ n(x^2-1)}},
\quad x \in [1,\infty), \, n\in \zbb_+.
   \end{align}
   \begin{lemma} \label{xin11}
The sequence $\{\xi_n\}_{n=0}^\infty$ given by
   \eqref{xin} has the following properties{\em :}
\begin{enumerate}
   \item[(i)] $\xi_0(x)=x$ for all $x \in [1,\infty)$,
   \item[(ii)] $\xi_{m+n}(x) = (\xi_m \circ \xi_n)(x)$
for all $x \in [1,\infty)$ and $m,n\in \zbb_+$,
   \item[(iii)] $\xi_n(1)=1$ for all $n\in \zbb_+$,
   \item[(iv)] $\xi_{n}(x) > \xi_{n+1}(x) > 1$ for all
$x \in (1,\infty)$ and $n\in \zbb_+$.
   \end{enumerate}
   \end{lemma}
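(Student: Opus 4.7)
My plan is that all four properties follow from direct algebraic manipulation of the defining formula \eqref{xin}, with the semigroup law (iii) being the only one that requires a nontrivial computation. The key observation, which makes everything fall out cleanly, is that if one sets $u = x^2 - 1 \Ge 0$, then $\xi_n(x)^2 = \frac{1+(n+1)u}{1+nu}$ is a Möbius transformation in $u$, and the semigroup property for $\xi_n$ essentially reflects an underlying addition formula for $n$.

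For (i), I would substitute $n=0$ to obtain $\xi_0(x)^2 = \frac{1+(x^2-1)}{1} = x^2$, and since $x \Ge 1$, taking the positive square root gives $\xi_0(x)=x$. For (ii), substituting $x=1$ makes both the numerator and denominator of \eqref{xin} equal to $1$, hence $\xi_n(1)=1$. Also, note in passing that for any $x\in[1,\infty)$ and $n\in\zbb_+$, both $1+n(x^2-1)$ and $1+(n+1)(x^2-1)$ are positive, so $\xi_n$ is well-defined and $\xi_n(x)\Ge 1$ (the ratio is $\Ge 1$ since the numerator exceeds the denominator by $x^2-1\Ge 0$).

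For (iii), which is the main step, I would set $y=\xi_n(x)$ and first compute
\begin{align*}
y^2 - 1 = \frac{1+(n+1)(x^2-1) - (1+n(x^2-1))}{1+n(x^2-1)} = \frac{x^2-1}{1+n(x^2-1)}.
\end{align*}
Substituting into $\xi_m(y)^2 = \frac{1+(m+1)(y^2-1)}{1+m(y^2-1)}$ and clearing the common denominator $1+n(x^2-1)$ in numerator and denominator, I would obtain
\begin{align*}
\xi_m(y)^2 = \frac{1+n(x^2-1)+(m+1)(x^2-1)}{1+n(x^2-1)+m(x^2-1)} = \frac{1+(m+n+1)(x^2-1)}{1+(m+n)(x^2-1)} = \xi_{m+n}(x)^2.
\end{align*}
Since both quantities lie in $[1,\infty)$, taking positive square roots yields (iii).

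For (iv), with $u=x^2-1>0$, the inequality $\xi_{n+1}(x)>1$ reduces to $1+(n+2)u > 1+(n+1)u$, which is immediate. To prove $\xi_n(x)>\xi_{n+1}(x)$, I would square both sides and, after multiplying out, reduce the claim to
\begin{align*}
(1+(n+1)u)^2 > (1+nu)(1+(n+2)u).
\end{align*}
Expanding both sides, all terms cancel except $(n+1)^2 u^2$ on the left and $n(n+2)u^2$ on the right, leaving the difference $u^2>0$, which holds since $u>0$. I do not anticipate any real obstacle here; the only bookkeeping care required is to ensure, when taking square roots, that we are working with values in $[1,\infty)$, which is guaranteed by the positivity observations already noted.
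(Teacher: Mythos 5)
Your verification is correct in all four parts; the paper itself gives no proof here (it explicitly leaves the verifications to the reader), and your direct algebraic checks, in particular the computation $\xi_n(x)^2-1=\frac{x^2-1}{1+n(x^2-1)}$ driving the semigroup law (iii), are exactly the routine argument the authors intend. Nothing is missing.
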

Before stating the main result of this section, we
recall the definition of an operator valued unilateral
weighted shift. Let $\mathcal M$ be a
\underline{nonzero} Hilbert space. Denote by
$\ell^2_{\mathcal M}$ the Hilbert space of all vector
sequences $\{h_n\}_{n=0}^{\infty} \subseteq \mathcal
M$ such that $\sum_{n=0}^{\infty} \|h_n\|^2 < \infty$
equipped with the standard inner product
   \begin{align*}
\big\langle \{g_n\}_{n=0}^{\infty},
\{h_n\}_{n=0}^{\infty}\big\rangle =
\sum_{n=0}^{\infty} \langle g_n, h_n \rangle, \quad
\{g_n\}_{n=0}^{\infty}, \{h_n\}_{n=0}^{\infty} \in
\ell^2_{\mathcal M}.
   \end{align*}
If $\{W_n\}_{n=0}^{\infty} \subseteq \B(\mathcal M)$
is a uniformly bounded sequence of operators, then the
operator $W \in \B(\ell^2_{\mathcal M})$ defined by
   \begin{align} \label{opvuws}
W(h_0, h_1, \ldots) = (0, W_0 h_0, W_1 h_1, \ldots),
\quad (h_0, h_1, \ldots) \in \ell^2_{\mathcal M},
   \end{align}
is called an {\em operator valued unilateral weighted
shift} with weights $\{W_n\}_{n=0}^{\infty}$. If each
weight $W_n$ of $W$ is an invertible (resp., a
positive) element of the $C^*$-algebra $\B(\mathcal
M)$, then we say that $W$ is an operator valued
unilateral weighted shift with {\em invertible}
(resp., {\em positive}) weights. Putting $\mathcal
M=\C$, we arrive at the well-known notion of a
unilateral weighted shift in $\ell^2$.

Let $W$ be as in \eqref{opvuws}. It is easy to verify
that
   \begin{align} \label{aopws}
W^*(h_0, h_1, \ldots) &= (W_0^*h_1, W_1^*h_2, \ldots),
\quad (h_0, h_1, \ldots) \in \ell^2_{\mathcal M},
   \\   \label{aopws2}
W^*W(h_0, h_1, \ldots) &= (W_0^*W_0h_0, W_1^*W_1h_1,
\ldots), \quad (h_0, h_1, \ldots) \in \ell^2_{\mathcal
M}.
   \end{align}
Given integers $m\Ge n \Ge 0$, we set (see \cite[p.\
409]{Ja-3})
   \begin{align} \label{wmndef}
W_{m,n} =
   \begin{cases}
W_{m-1} W_{m-2} \, \cdots \, W_n & \text{if } m>n,
   \\[.5ex]
I & \text{if } m=n.
   \end{cases}
   \end{align}

   Now we characterize non-unitary $2$-isometric
operators satisfying the kernel condition. In what
follows, by a unitary operator, we mean a unitary
operator $U\in \B(\kk)$, where $\kk$ is an arbitrary
Hilbert space; the case $\kk=\{0\}$ is not excluded.
   \begin{theorem} \label{model}
If $T\in \B(\hh)$ is a non-unitary $2$-isometry, then
the following conditions are equivalent{\em :}
   \begin{enumerate}
   \item[(i)] $T$ satisfies the kernel condition,
   \item[(ii)] $T(\ker T^*) \perp T^n(\ker T^*)$ for
every integer $n \Ge 2,$
   \item[(iii)] the spaces $\{T^n (\ker T^*)\}_{n=0}^{\infty}$
are mutually orthogonal,
   \item[(iv)] $T\cong U \oplus W$, where $U$ is a unitary operator
and $W$ is an operator valued unilateral weighted
shift with invertible positive weights,
   \item[(v)] $T\cong U \oplus W,$ where
$U$ is a unitary operator and $W$ is an operator
valued unilateral weighted shift on $\ell^2_{\mathcal
M}$ with weights $\{W_n\}_{n=0}^{\infty}$ defined by
   \begin{align} \label{wagi}
   \left.
   \begin{gathered} W_n = \int_{[1,\infty)}
\xi_n(x) E(d x), \quad n \in \zbb_+,
   \\
   \begin{minipage}{62ex}
where $E$ is a compactly supported $\B(\mathcal
M)$-valued Borel spectral measure on the iterval
$[1,\infty)$ and $\{\xi_n\}_{n=0}^{\infty}$ is defined
by \eqref{xin}.
   \end{minipage}
   \end{gathered}
   \; \right\}
   \end{align}
   \end{enumerate}
Moreover, the following hold{\em :}
   \begin{enumerate}
   \item[(a)] if $T\in \B(\hh)$ is a non-unitary
$2$-isometry that satisfies {\em (i)}, then $\hh_u:=
\bigcap_{n = 1}^{\infty} T^n (\hh)$ is a closed vector
subspace of $\hh$ reducing $T$ to a unitary operator
and $T|_{\hh_u^{\perp}} \cong W$, where $W$ is an
operator valued unilateral weighted shift on
$\ell^2_{\mathcal M}$ with weights
$\{W_n\}_{n=0}^{\infty}$ given by \eqref{wagi} and
$\dim \mathcal M = \dim \ker T^*,$
   \item[(b)] if $U$
is a unitary operator and $W$ is an operator valued
unilateral weighted shift on $\ell^2_{\mathcal M}$
with weights\footnote{\;Note that, in view of
\eqref{unifogr}, the sequence $\{W_n\}_{n=0}^{\infty}
\subseteq \B(\mathcal M)$ defined by \eqref{wagi} is
uniformly bounded, and consequently $W\in
\B(\ell^2_{\mathcal M})$.} $\{W_n\}_{n=0}^{\infty}$
defined by \eqref{wagi}, then $T:=U \oplus W$ is a
non-unitary $2$-isometry that satisfies {\em (i)}, $U$
is the normal part of $T$, $W$ is the pure part of $T$
and $\ker W^*=\mathcal M \oplus \{0\} \oplus \{0\}
\oplus \ldots.$
   \end{enumerate}
   \end{theorem}
   \begin{proof}
Assume that $T$ is a non-unitary $2$-isometry.

(i)$\Rightarrow$(ii) Note that for every integer $n
\Ge 2$,
   \begin{align*}
\langle T f, T^n g \rangle = \langle T^*(T^*T f),
T^{n-2} g \rangle = 0, \qquad f,g \in \ker T^*.
   \end{align*}

(ii)$\Rightarrow$(iii) It suffices to show that for
every integer $n\Ge 0,$
   \begin{align}  \label{tjk}
T^j (\ker T^*) \perp T^k (\ker T^*), \quad k \in
\{j+1, j+2,\ldots\}, \, j \in \{0, \ldots, n\}.
   \end{align}
We use induction on $n$. The cases $n=0$ and $n=1$ are
obvious. Suppose \eqref{tjk} holds for a fixed $n\Ge
1$. Since $I-2T^*T + {T^*}^2T^2 =0$ yields
   \begin{align} \label{tjk-2}
T^{*(n-1)}T^{n-1} -2T^{*n} T^n + T^{*(n+1)}T^{n+1} =
0,
   \end{align}
we deduce that for every integer $k \Ge n+2$,
   \allowdisplaybreaks
   \begin{align*}
\langle T^{n+1}f, T^k g\rangle & = \langle
T^{*(n+1)}T^{n+1}f, T^{k-(n+1)} g\rangle
   \\
& \hspace{-.3ex}\overset{\eqref{tjk-2}}= 2 \langle
T^nf, T^{k-1} g\rangle - \langle T^{n-1}f, T^{k-2}
g\rangle
   \\
& \hspace{-.3ex} \overset{\eqref{tjk}} = 0, \quad f,g
\in \ker T^*,
   \end{align*}
   which completes the induction argument and gives
(iv).

(iii)$\Rightarrow$(v) By \cite[Theorem~ 3.6]{Sh}
(which is also valid for inseparable Hilbert spaces),
$\hh_u$ is a closed vector subspace of $\hh$ that
reduces $T$ to the unitary operator $U$ and
$\hh_u^{\perp} = \bigvee_{n=0}^{\infty} T^n(\ker
T^*)$. Since $T$ is non-unitary, $\hh_u^{\perp} \neq
\{0\}$ and consequently $\ker T^* \neq \{0\}$. Hence,
by the injectivity of $T$, $\mathcal M_n:=T^n(\ker
T^*) \neq \{0\}$ for all $n\in \zbb_+$. Clearly,
$A:=T|_{\hh_u^{\perp}}$ is a $2$-isometry. Since the
operator $T$ is bounded from below, we deduce that for
every $n\in \zbb_+$, $\mathcal M_n$ is a closed vector
subspace of $\hh$ and $\varLambda_n:= T|_{\mathcal
M_n}\colon \mathcal M_n \to \mathcal M_{n+1}$ is a
linear homeomorphism. Therefore, by \cite[Problem
56]{Hal}, for every $n\in \zbb_+$, the Hilbert spaces
$\mathcal M_n$ and $\mathcal M_0$ are unitarily
equivalent. Let, for $n\in \zbb_+$, $V_n\colon
\mathcal M_n \to \mathcal M_0$ be any unitary
isomorphism. Since $\hh_u^{\perp} =
\bigoplus_{n=0}^{\infty} \mathcal M_n$, we can define
the unitary isomorphism $V\colon \hh_u^{\perp} \to
\ell^2_{\mathcal M_0}$ by
   \begin{align*}
V(h_0 \oplus h_1 \oplus \ldots) = (V_0h_0, V_1h_1,
\ldots), \quad h_0 \oplus h_1 \oplus \ldots \in
\hh_u^{\perp}.
   \end{align*}
Let $S\in \B(\ell^2_{\mathcal M_0})$ be the operator
valued unilateral weighted shift with (uniformly
bounded) weights $\{V_{n+1} \varLambda_n
V_n^{-1}\}_{n=0}^{\infty} \subseteq \B(\mathcal M_0)$.
Then
   \allowdisplaybreaks
   \begin{align*}
VA(h_0 \oplus h_1 \oplus \ldots) & = V(0 \oplus
\varLambda_0 h_0 \oplus \varLambda_1 h_1 \oplus
\ldots)
   \\
& = (0, (V_1\varLambda_0 V_0^{-1}) V_0h_0,
(V_2\varLambda_1 V_1^{-1})V_1h_1, \ldots)
   \\
& = SV(h_0 \oplus h_1 \oplus \ldots), \quad h_0 \oplus
h_1 \oplus \ldots \in \hh_u^{\perp},
   \end{align*}
which means that $A$ is unitarily equivalent to $S$.
Since the weights of the $2$-isometry $S$ are
invertible in $\B(\mathcal M_0)$, we infer from
\cite[Corollary~ 2.3]{Ja-3} that $S$ is unitarily
equivalent to a $2$-isometric operator valued
unilateral weighted shift $W$ on $\ell^2_{\mathcal
M_0}$ with invertible weights $\{W_n\}_{n=0}^{\infty}
\subseteq \B(\mathcal M_0)$ such that $W_n \, \cdots\,
W_0 \Ge 0$ for all integers $n \Ge 0$. In turn, by
\cite[Lemma 1]{R-0}, $\|Wh\| \Ge \|h\|$ for all $h\in
\ell^2_{\mathcal M_0}$, which yields
   \begin{align*}
\|W_0 h_0\| = \|(0, W_0 h_0, 0, \ldots)\| = \|W(h_0,
0, \ldots)\| \Ge \|h_0\|, \quad h_0 \in \mathcal M_0.
   \end{align*}
Hence $W_0 \Ge I$. This combined with the proof of
\cite[Theorem~ 3.3]{Ja-3} implies that
   \begin{align*}
W_n = \int_{[1,\|W_0\|]} \hat\xi_n(x) E(d x), \quad
n\in \zbb_+,
   \end{align*}
where $E$ is the spectral measure of $W_0$ and
$\{\hat\xi_n\}_{n=0}^{\infty}$ is the sequence of
self-maps of the interval $[1,\infty)$ defined
recursively by
   \begin{align*}
\text{$\hat\xi_0(x) = x$ and $\hat\xi_{n+1}(x) =
\sqrt{\frac{2\hat\xi_{n}^2(x)-1}{\hat\xi_{n}^2(x)}}$
for all $x \in [1, \infty)$ and $n\in \zbb_+.$}
   \end{align*}
Using induction, one can show that $\hat\xi_n = \xi_n$
for all $n\in \zbb_+$, which gives (v).

The implication (v)$\Rightarrow$(iv) is obvious.

(iv)$\Rightarrow$(i) Let $W$ be as in \eqref{opvuws}.
Since the weights of $W$ are invertible, we infer from
\eqref{aopws} that $\ker W^* = \mathcal M \oplus \{0\}
\oplus \{0\} \oplus \ldots.$ This combined with
\eqref{aopws2} yields $W^*W(\ker W^*) = \ker W^*$,
which gives (i).

Now we turn to the proof of the ``moreover'' part.

(a) This has already been done in the proof of the
implication (iii)$\Rightarrow$(v).

(b) First, we show that $U$ and $W$ are the normal and
pure parts of $T$, respectively. Denote by $\kk$ the
Hilbert space in which the unitary operator $U$ acts.
Since $W$ is an operator valued unilateral weighted
shift with invertible weights, we infer from
\eqref{opvuws} and \eqref{aopws} that
   \begin{align*}
\kk \oplus \{0\} \subseteq \bigcap_{k=1}^{\infty}
\bigcap_{l=1}^{\infty} \ker (T^{*k}T^l - T^l T^{*k})
\subseteq \bigcap_{k=1}^{\infty} \ker (T^{*k}T^k - T^k
T^{*k}) \subseteq \kk \oplus \{0\}.
   \end{align*}
This, together with \cite[Corollary~ 1.3]{Mo}, proves
our claim.

Arguing as in the proof of the implication
(iv)$\Rightarrow$(i), we verify that $\ker W^* =
\mathcal M \oplus \{0\} \oplus \{0\} \oplus \ldots$
and $T$ satisfies (i). Therefore, it remains to show
that $W$, and consequently $T$, are $2$-isometries.
Since, by the Stone-von Neumann calculus for
selfadjoint operators, $\|W_n\| = \sup_{x\in \supp E}
\xi_n(x)$ for every $n\Ge 0$, where $\supp E$ denotes
the closed support of $E$, we get (see Lemma~
\ref{xin11})
   \begin{align} \label{unifogr}
\sup_{n\Ge 0} \|W_n\| \Le \sup_{n\Ge 0} \, \sup_{x \in
[1,\eta]} \xi_n(x) = \sup_{x \in [1,\eta]} \,
\sup_{n\Ge 0} \xi_n(x) = \sup_{x \in [1,\eta]}
\xi_0(x) = \eta,
   \end{align}
where $\eta:=\sup(\supp{E})$. This implies that $W \in
\B(\ell^2_{\mathcal M})$ and $\|W\| \Le \eta$ (see
\cite[p.\ 408]{Ja-3}). By \eqref{wagi} and the
multiplicativity of spectral integral, we have
(consult \eqref{wmndef})
   \begin{align} \label{wmn}
W_{m,n} = \int_{[1,\eta]}
\sqrt{\frac{1+m(x^2-1)}{1+n(x^2-1)}} \, E(d x), \quad
m \Ge n \Ge 0.
   \end{align}
This implies that for all integers $s\Ge 0$,
   \allowdisplaybreaks
   \begin{multline*}
\sum_{j=0}^2 (-1)^j \binom 2 j \|W_{s+j,s}f\|^2
   \\
= \int_{[1,\eta]} \frac{\sum_{j=0}^2 (-1)^j \binom 2 j
(1+(s+j)(x^2-1))}{1+s(x^2-1)} \, \langle E(d x)f,f
\rangle = 0, \quad f \in \mathcal M.
   \end{multline*}
Hence, in view of \cite[Proposition~ 2.5(i)]{Ja-3},
$W$ is a $2$-isometry. This completes the proof.
   \end{proof}
   \begin{corollary} \label{Zak1}
Let $T\in \B(\hh)$ be a $2$-isometry that satisfies
the kernel condition. Then the following statements
are equivalent{\em :}
   \begin{enumerate}
   \item[(i)] $T$ is analytic,
   \item[(ii)] $T$ is completely non-unitary,
   \item[(iii)] $T$ is pure,
   \item[(iv)] $T$ is unitarily
equivalent to an operator valued unilateral weighted
shift $W$ on $\ell^2_{\mathcal M}$ with weights
$\{W_n\}_{n=0}^{\infty}$ defined by \eqref{wagi},
where $\mathcal M = \ker T^*$.
   \end{enumerate}
   \end{corollary}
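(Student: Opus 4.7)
The plan is to extract this corollary directly from Theorem \ref{model}. The implications (i)$\Rightarrow$(ii) and (iii)$\Rightarrow$(ii) are built into the definitions: the paper already notes that every analytic operator is completely non-unitary, and an operator with no nonzero normal reducing subspace certainly has no nonzero unitary reducing subspace (since unitary operators are normal). I would first dispose of the degenerate case in which $T$ is unitary: then the kernel condition is automatic, $T^n(\hh)=\hh$ for every $n$, $\hh$ itself reduces $T$ to a unitary, and $T$ is normal on $\hh$, so each of (i), (ii), (iii), (iv) reduces to $\hh=\{0\}$ and the equivalences are trivial.

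In the remaining case $T$ is a non-unitary $2$-isometry satisfying the kernel condition, so Theorem \ref{model} produces a decomposition $T\cong U\oplus W$ with $W$ the shift in \eqref{wagi}, and by the ``moreover'' part (b) of the theorem $U$ is the normal part and $W$ is the pure part of $T$. The key supplementary observation I need is that $W$ itself is analytic: directly from \eqref{opvuws} one reads off the inclusion $W^n(\ell^2_{\mathcal M})\subseteq \{0\}^{\oplus n}\oplus \ell^2_{\mathcal M}$, so $\bigcap_{n\Ge 1}W^n(\ell^2_{\mathcal M})=\{0\}$; in particular $W$ is completely non-unitary. Combined with Theorem \ref{model}(a), which identifies $\bigcap_{n\Ge 1}T^n(\hh)$ with the space $\hh_u$ on which $U$ acts, this shows that each of (i), (ii), (iii) for $T$ forces $U$ to act on the zero space, i.e.\ $T\cong W$. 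This is (iv) once one uses the dimension equality $\dim \mathcal M=\dim\ker T^*$ from Theorem \ref{model}(a) to transport $\mathcal M$ unitarily onto $\ker T^*$. Conversely, (iv)$\Rightarrow$(i) is precisely the analyticity calculation for $W$ just recorded, and (iv)$\Rightarrow$(iii) is immediate from Theorem \ref{model}(b) applied with trivial unitary summand.

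The main obstacle is really just verifying that the shift $W$ of \eqref{wagi} is itself analytic (hence completely non-unitary and pure), which is what ensures that the abstract normal/pure decomposition of $T$ lines up with the concrete summands $U$ and $W$; this is a one-line calculation from \eqref{opvuws}, and everything else is bookkeeping around the trivial unitary case and the unitary transport of the model space $\mathcal M$ onto $\ker T^*$.
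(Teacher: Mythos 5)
Your argument is correct and is essentially the derivation the paper intends: Corollary \ref{Zak1} is stated without proof as an immediate consequence of Theorem \ref{model}, and you supply exactly the needed ingredients — the general implications (i)$\Rightarrow$(ii) and (iii)$\Rightarrow$(ii), the identification of $\hh_u=\bigcap_{n\Ge 1}T^n(\hh)$ with the unitary (equivalently, normal) summand via parts (a) and (b) of Theorem \ref{model}, and the one-line verification from \eqref{opvuws} that the model shift $W$ is analytic, which closes the cycle through (iv). The logical structure (each of (i)--(iii) forces $\hh_u=\{0\}$, hence (iv); and (iv) gives back (i) and (iii)) is complete, and the handling of the unitary/degenerate case is harmless.
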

   \begin{corollary}
Let $U$, $W,$ $E$ and $T$ be as in Theorem~ {\em
\ref{model}(b)}. Then $T$ is an isometry if and only
if $\supp E = \{1\}.$
   \end{corollary}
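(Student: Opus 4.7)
The plan is to reduce the isometry condition on $T = U \oplus W$ to a condition on each weight $W_n$, then exploit the spectral integral representation \eqref{wagi} to translate this into a support condition on $E$.

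First I would observe that since $U$ is unitary, $T=U\oplus W$ is an isometry if and only if $W$ is an isometry, i.e., $W^*W = I$. By formula \eqref{aopws2}, $W^*W$ is the diagonal operator with entries $W_n^*W_n$, so $W$ is an isometry if and only if $W_n^*W_n = I_{\mathcal M}$ for every $n\in \zbb_+$. Because $\xi_n$ is a positive real-valued function, the spectral integral $W_n = \int_{[1,\infty)} \xi_n(x)\,E(d x)$ yields a positive operator, so by the multiplicativity of the spectral calculus $W_n^*W_n = W_n^2 = \int_{[1,\infty)} \xi_n(x)^2 \,E(d x)$. Consequently the isometry of $W$ reduces to the requirement that $\xi_n(x) = 1$ for $E$-almost every $x$, and for every $n\in \zbb_+$.

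Next I would analyze the zero set of $\xi_n - 1$. Directly from \eqref{xin}, $\xi_n(x) = 1$ is equivalent to $1+(n+1)(x^2-1) = 1+n(x^2-1)$, i.e.\ $x^2 = 1$, which on $[1,\infty)$ forces $x = 1$ (this is already reflected in Lemma \ref{xin11}(ii),(iv)). Hence the condition $W_n^2 = I$ for some (equivalently, every) $n \Ge 1$ is the same as $E$ being concentrated on $\{1\}$, i.e.\ $\supp E \subseteq \{1\}$. Since $\mathcal M$ is nonzero, $E$ cannot be the zero spectral measure, so this is equivalent to $\supp E = \{1\}$.

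For the converse direction, if $\supp E = \{1\}$ then $E = P\,\delta_1$ for some nonzero projection $P$ (in fact $P = I_{\mathcal M}$, since $E$ is a spectral measure on $[1,\infty)$ with total mass $I_{\mathcal M}$), so by Lemma \ref{xin11}(ii), $W_n = \xi_n(1)\,I_{\mathcal M} = I_{\mathcal M}$ for all $n\in \zbb_+$. Then $W$ is an isometric unilateral shift on $\ell^2_{\mathcal M}$, and $T = U \oplus W$ is an isometry. There is no real obstacle in this argument; the only point that warrants explicit mention is that $E$ is not the zero measure (which is why we need to have assumed $\mathcal M \neq \{0\}$ in the definition of an operator valued unilateral weighted shift).
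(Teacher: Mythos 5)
Your proof is correct and follows essentially the same route as the paper's: reduce isometry of $T$ to isometry of $W$, use \eqref{aopws2} to pass to the weights, and use the spectral representation \eqref{wagi} together with the fact that $\xi_n(x)=1$ only at $x=1$ to conclude $\supp E=\{1\}$. The paper's version of the forward direction is marginally shorter in that it only looks at $W_0$ (which is positive and isometric, hence equal to $I_{\mathcal M}$, and whose spectral measure is exactly $E$ since $\xi_0(x)=x$), but this is a cosmetic difference rather than a different argument.
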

   \begin{proof}
If $T$ is an isometry, then $W$ is an isometry. This
together with \eqref{aopws2} and \eqref{wagi} implies
that $W_0$ is positive and unitary, and so
   \begin{align*}
\supp E = \sigma(W_0)=\{1\}.
   \end{align*}
The reverse implication is obvious because, due to
\eqref{wagi} and Lemma~ \ref{xin11}(iii),
$W_n=I_{\mathcal M}$ for all $n\in \zbb_+$.
   \end{proof}
It is easily seen that if an operator $T\in \B(\hh)$
is such $\ker(T^*T-I)=\{0\},$ then $T$ is completely
non-unitary. A converse to this implication is not
true in general even in the case of non-isometric
$2$-isometries satisfying the kernel condition (use
\eqref{aopws2} and Corollary~ \ref{Zak1} with $\dim
\mathcal M \Ge 2$).

We conclude this section by describing cyclic analytic
$2$-isometries satisfying the kernel condition. The
description itself relies on Richter's model for
cyclic analytic $2$-isometries. Namely, by
\cite[Theorem~ 5.1]{R-1}, a cyclic analytic
$2$-isometry is unitarily equivalent to the operator
$M_{z,\mu}$ of multiplication by the coordinate
function $z$ on a Dirichlet-type space $\mathcal
D(\mu)$, where $\mu$ is a finite positive Borel
measure on the interval $[0,2\pi)$ (which can be
identified with the finite positive Borel measure on
the unit circle \mbox{$\mathbb T = \{z\in \C \colon
|z|=1\}$).} The Hilbert space $\mathcal D(\mu)$
consists of all analytic function $f$ on the open unit
disc $\mathbb D = \{z\in \C \colon |z|< 1\}$ such that
   \begin{align*}
\int_{\mathbb D} |f'(z)|^2 \varphi_{\mu}(z) d A(z) <
\infty,
   \end{align*}
   where $f'$ stands for the derivative of $f$, $A$
denotes the normalized Lebesgue area measure on
$\mathbb D$ and $\varphi_{\mu}$ is the positive
harmonic function on $\mathbb D$ defined by
   \begin{align} \label{phimu1}
\varphi_{\mu}(z) = \frac{1}{2\pi} \int_{[0,2\pi)}
\frac{1-|z|^2}{|e^{it}-z|^2} d \mu(t), \quad z \in
\mathbb D.
   \end{align}
The inner product $\langle\,\cdot\,,
\mbox{-}\rangle_{\mu}$ of $\mathcal D(\mu)$ is given
by
   \begin{align} \label{phimu2}
\langle f, g \rangle_{\mu} = \langle f,g \rangle_{H^2}
+ \int_{\mathbb D} f'(z) \overline{g'(z)}
\varphi_{\mu}(z) d A(z), \quad f,g \in \mathcal
D(\mu),
   \end{align}
where $\langle \, \cdot\, ,\mbox{-} \rangle_{H^2}$
stands for the inner product of the Hardy space $H^2$.
The induced norm of $\mathcal D(\mu)$ is denoted by
$\|\cdot\|_{\mu}$. In this model, $2$-isometries
satisfying the kernel condition can be described as
follows\footnote{\;This answers the question of Eva A.
Gallardo-Guti\'{e}rrez asked during {\em Workshop on
Operator Theory, Complex Analysis and Applications}
2016, Coimbra, Portugal, June 21-24.}.
   \begin{proposition}\label{2isokk}
Under the above assumptions, $M_{z,\mu}$ satisfies the
kernel condition if and only if $\mu=\alpha \, m$ for
some $\alpha \in \rbb_+,$ where $m$ is the Lebesgue
measure on $[0,2\pi).$
   \end{proposition}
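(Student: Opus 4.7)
The plan is to reduce the kernel condition for $M_{z,\mu}$ to an explicit condition on the Fourier coefficients of $\mu$ and then invoke uniqueness of the Fourier transform on measures. The first step is to identify $\ker M_{z,\mu}^*$. Since $1$ is cyclic for $M_{z,\mu}$, polynomials are dense in $\mathcal D(\mu)$. From \eqref{phimu2} one immediately verifies that $\langle 1, zg\rangle_\mu = 0$ for every $g \in \mathcal D(\mu)$: the $H^2$-part vanishes, and the Dirichlet part vanishes because the derivative of the constant function $1$ is zero. Since $M_{z,\mu}$ is a $2$-isometry it is bounded below, so $z \mathcal D(\mu)$ is closed; combining this with the density of polynomials yields the orthogonal decomposition $\mathcal D(\mu) = \C \cdot 1 \oplus z \mathcal D(\mu)$. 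Consequently $\ker M_{z,\mu}^* = \C \cdot 1$.

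With this identification, the kernel condition \eqref{kc} reduces to the requirement that $M_{z,\mu}^* M_{z,\mu}(1)$ be a constant function, equivalently
\begin{align*}
\langle M_{z,\mu}^* M_{z,\mu}(1), z^n \rangle_\mu = \langle z, z^{n+1}\rangle_\mu = 0 \qquad \text{for every integer } n \Ge 1.
\end{align*}
The $H^2$-part of $\langle z, z^{n+1}\rangle_\mu$ vanishes for such $n$, so the remaining task is to evaluate the Dirichlet integral $\langle z, z^{n+1}\rangle_\mu = (n+1) \int_\D \bar z^n \varphi_\mu(z)\, dA(z)$.

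Expanding the Poisson kernel that defines $\varphi_\mu$ in \eqref{phimu1} as a Fourier series, namely $\varphi_\mu(re^{i\theta}) = \sum_{k \in \zbb} r^{|k|} e^{ik\theta} \hat\mu(k)$ with $\hat\mu(k) := \frac{1}{2\pi}\int_0^{2\pi} e^{-ikt}\, d\mu(t)$, and integrating in polar coordinates, a routine calculation produces $\int_\D \bar z^n \varphi_\mu\, dA = \hat\mu(n)/(n+1)$. Hence $\langle z, z^{n+1}\rangle_\mu = \hat\mu(n)$ for every $n \Ge 1$, so the kernel condition becomes $\hat\mu(n) = 0$ for all $n \Ge 1$. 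Since $\mu$ is real, $\hat\mu(-n) = \overline{\hat\mu(n)}$, so this is the same as $\hat\mu(k) = 0$ for all $k \neq 0$. Applying the uniqueness of Fourier coefficients of measures to $\mu - \hat\mu(0)\, m$, one concludes that $\mu = \alpha\, m$ with $\alpha = \hat\mu(0) \in \rbb_+$.

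The only non-bookkeeping step is the Poisson-kernel Fourier computation inside the Dirichlet integral, and this is standard. As a painless sanity check of the easy direction: when $\mu = \alpha m$ the function $\varphi_\mu$ is identically $\alpha$, so $\|f\|_\mu^2 = \sum_n (1 + \alpha n)|a_n|^2$ for $f = \sum_n a_n z^n$, exhibiting $M_{z,\mu}$ as a diagonal unilateral weighted shift in the monomial basis; such a shift trivially satisfies the kernel condition.
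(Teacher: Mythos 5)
Your proof is correct, and it takes a genuinely different route from the paper's. The paper proves the easy direction essentially as in your sanity check, by showing that $\mu=\alpha m$ makes $\varphi_\mu\equiv\alpha$ and hence exhibits $M_{z,\mu}$ as the unilateral weighted shift with weights $\{\xi_n(\sqrt{1+\alpha})\}_{n=0}^\infty$; for the hard direction it then invokes the model theorem (Theorem \ref{model}(a)) together with $\dim\ker M_{z,\mu}^*=1$ to conclude that a kernel-condition $M_{z,\mu}$ is unitarily equivalent to such a shift, hence to some $M_{z,\alpha m}$, and finally appeals to Richter's uniqueness theorem \cite[Theorem 5.2]{R-1} to force $\mu=\alpha m$. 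You instead reduce the kernel condition directly to the scalar identities $\langle z,z^{n+1}\rangle_\mu=\hat\mu(n)=0$ for $n\Ge 1$ via the Poisson-kernel expansion of $\varphi_\mu$, and close with uniqueness of Fourier coefficients of measures on the circle; I checked the computation $\int_{\mathbb D}\bar z^n\varphi_\mu\,dA=\hat\mu(n)/(n+1)$ and the reduction of \eqref{kc} to $M_{z,\mu}^*M_{z,\mu}(1)\in\C\cdot 1$, and both are right. Your approach buys elementarity: it bypasses both the model theorem and Richter's uniqueness theorem, using only density of polynomials in $\mathcal D(\mu)$ and the fact that a $2$-isometry is bounded below (needed so that $z\,\C[z]$ is dense in the closed range $z\mathcal D(\mu)$, which justifies testing $M_{z,\mu}^*M_{z,\mu}(1)$ against the monomials $z^n$, $n\Ge 1$, only). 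What the paper's route provides for free, and yours does not, is the explicit identification of $M_{z,\mu}$ with the weighted shift of weights $\xi_n(\sqrt{1+\alpha})$, which is precisely what Remark \ref{Dirsh} extracts from the proof; if you wanted that consequence you would still have to run the easy-direction computation separately, as you in fact do at the end.
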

   \begin{proof}
Suppose $\mu=\alpha m$ for some $\alpha\in \rbb_+$.
Since the geometric series expansion of
$(1-re^{it})^{-1}$ is uniformly convergent with
respect to $t$ in $\rbb$, we infer from \eqref{phimu1}
that
   \begin{align*}
\varphi_{\mu}(re^{i\vartheta}) = \alpha, \quad
\vartheta \in \rbb, \, r \in [0,1).
   \end{align*}
This, together with \eqref{phimu2} and
\cite[Corollary~ 3.8(d)]{R-1}, implies that the
sequence $\{e_n\}_{n=0}^{\infty}$ defined by
   \begin{align*}
e_n(z)=\frac{1}{\sqrt{1+ n \alpha}} z^n, \quad z\in
\mathbb D, \, n\in \zbb_+,
   \end{align*}
is an orthonormal basis of $\mathcal D(\mu)$. Since
$M_{z,\mu} e_n = \sqrt{\frac{1+ (n+1)\alpha}{1+
n\alpha}} \, e_{n+1}$ for all $n\in \zbb_+$, we deduce
that $M_{z,\mu}$ is unitarily equivalent to the
unilateral weighted shift with weights
$\{\xi_n(\lambda)\}_{n=0}^{\infty}$, where
$\lambda:=\sqrt{1+\alpha}$. As a consequence,
$M_{z,\mu}$ satisfies the kernel condition.

Suppose now that $M_{z,\mu}$ is the operator of
multiplication by the coordinate function $z$ on
$\mathcal D(\mu)$ satisfying the kernel condition,
where $\mu$ is a finite positive Borel measure on
$[0,2\pi)$. Since $M_{z,\mu}$ is an analytic
$2$-isometry such that $\dim \ker M_{z,\mu}^*=1$ (see
\cite[Corollary~ 3.8(a)]{R-1}), we infer from Theorem~
\ref{model}(a) that $M_{z,\mu}$ is unitarily
equivalent to a $2$-isometric unilateral weighted
shift $S$ with weights
$\{\xi_n(\lambda)\}_{n=0}^{\infty}$, where $\lambda
\in [1,\infty)$ (note that the closed support of $E$
is a one-point set). In view of the previous
paragraph, $S$ is unitarily equivalent to the operator
$M_{z,\alpha m}$ of multiplication by the coordinate
function $z$ on $\mathcal D(\alpha \, m)$, where
$\alpha=\lambda^2-1\in \rbb_+$. Applying
\cite[Theorem~ 5.2]{R-1}, we conclude that $\mu=\alpha
\, m$.
   \end{proof}
   \begin{remark} \label{Dirsh}
It follows from the first paragraph of the proof of
Proposition~ \ref{2isokk} that $\mathcal D(m)$ is the
Dirichlet space and the operator of multiplication by
the coordinate function $z$ on $\mathcal D(m)$ ({\em
Dirichlet shift}\/) is unitarily equivalent to the
unilateral weighted shift with weights
$\left\{\sqrt{\frac{n+2}{n+1}}\/\right\}_{n=0}^{\infty}.$
This means that Dirichlet shift is the fundamental
example of a cyclic analytic $2$-isometry which
satisfies the kernel condition.
   \hfill $\diamondsuit$
   \end{remark}
   \section{\label{Sec6}The Cauchy dual subnormality problem via the kernel condition}
In this section, we answer the Cauchy dual
subnormality problem in the affirmative for
$2$-isometries that satisfy the kernel condition (see
Theorem~ \ref{cdsubn}). We provide two proofs, the
first of which depends on the model Theorem~
\ref{model}, while the second does not.

Before doing this, we recall some definitions and
state two useful facts related to classical moment
problems. A sequence
$\gammab=\{\gamma_n\}_{n=0}^{\infty}\subseteq \rbb$ is
said to be a {\em Hamburger} (resp., {\em Stieltjes},
{\em Hausdorff}\/) {\em moment sequence} if there
exists a positive Borel measure $\mu$ on $\rbb$
(resp., $\rbb_+$, $[0,1]$) such that $\gamma_n = \int
t^n d \mu(t)$ for every $n\in \zbb_+$; such a $\mu$ is
called a {\em representing measure} of $\gammab$. Note
that a Hausdorff moment sequence is always determinate
as a Hamburger moment sequence, i.e., it has a unique
representing measure on $\rbb$. We refer the reader to
\cite{B-C-R,sim} for more information on moment
problems. The following lemma describes representing
measures of special rational-type Hausdorff moment
sequences.
   \begin{lemma}  \label{over}
Let $a,b\in \rbb$ be such that $a + bn \neq 0$ for
every $n\in\zbb_+$ and let $\gammab_{a,b} =
\{\gamma_{a,b}(n)\}_{n=0}^{\infty}$ be a sequence
given by $\gamma_{a,b}(n) = \frac{1}{a+bn}$ for all
$n\in\zbb_+$. Then $\gammab_{a,b}$ is a Hamburger
moment sequence if and only if $a>0$ and $b \Ge 0$. If
this is the case, then $\gammab_{a,b}$ is a Hausdorff
moment sequence and its unique representing measure
$\mu_{a,b}$ is given by
   \begin{align*}
\mu_{a,b} (\varDelta) =
   \begin{cases}
   \frac{1}{b} \int_{\varDelta} t^{\frac{a}{b}-1} d t
   & \text{ if } a > 0 \text{ and } b > 0,
   \\[1ex]
   \frac 1a \delta_1(\varDelta) & \text{ if } a > 0
   \text{ and } b=0,
   \end{cases}
\quad \varDelta \in \borel{[0,1]}.
   \end{align*}
   \end{lemma}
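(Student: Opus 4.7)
The plan is to prove the two directions of the biconditional separately, with the forward direction handled by a direct integral computation and the reverse direction by extracting nonnegativity constraints from the fact that positive measures give nonnegative even moments.

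For the \emph{if} direction, I would assume $a>0$ and $b\Ge 0$ and simply verify by direct computation that the candidate measure $\mu_{a,b}$ indeed has the right moments. In the case $b=0$, the measure $\frac{1}{a}\delta_1$ is supported on $\{1\}$ and so $\int t^n\, d\mu_{a,b}(t)=\frac{1}{a}=\gamma_{a,b}(n)$. In the case $b>0$, the computation
\[
\int_{[0,1]} t^n \cdot \frac{1}{b} t^{\frac{a}{b}-1}\, dt = \frac{1}{b}\cdot \frac{1}{n+\frac{a}{b}} = \frac{1}{a+bn} = \gamma_{a,b}(n)
\]
is immediate from $a/b>0$ which guarantees integrability of $t^{a/b-1}$ at $0$. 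Since $\mu_{a,b}$ is supported in $[0,1]$, this automatically shows that $\gammab_{a,b}$ is a Hausdorff moment sequence; the determinacy (on $\rbb$) is the standard fact quoted just before the lemma.

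For the \emph{only if} direction, suppose $\gammab_{a,b}$ admits a representing positive Borel measure $\mu$ on $\rbb$. First, $\gamma_{a,b}(0)=1/a=\mu(\rbb)\Ge 0$, and since $a\neq 0$ by hypothesis this forces $a>0$. Next, for every $n\in \zbb_+$ we have
\[
\frac{1}{a+2bn} = \gamma_{a,b}(2n) = \int_{\rbb} t^{2n}\, d\mu(t) \Ge 0,
\]
so $a+2bn>0$ for all $n\in\zbb_+$. If $b<0$ this fails for all sufficiently large $n$, so necessarily $b\Ge 0$. Combined with the \emph{if} direction, this also shows that the representing measure coincides with $\mu_{a,b}$ (by Hausdorff determinacy).

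I do not expect a real obstacle: the argument is essentially a Beta-integral computation together with the observation that even moments of a positive measure are nonnegative. The only point worth a moment's care is checking that the formula in the $b>0$ case is integrable at $t=0$, which is exactly where the sign constraint $a>0$ is used, and confirming that the uniqueness claim follows from determinacy of Hausdorff moment sequences rather than requiring any additional argument.
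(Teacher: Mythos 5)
Your proposal is correct and follows essentially the same route as the paper: the necessity of $a>0$ and $b\Ge 0$ is extracted from the positivity of the even moments $\gamma_{a,b}(2n)$, and the sufficiency is a direct verification that $\mu_{a,b}$ has the stated moments via the elementary integral $\int_{[0,1]}t^{\alpha}\,dt=\frac{1}{\alpha+1}$ for $\alpha>-1$, with uniqueness coming from the determinacy of Hausdorff moment sequences noted just before the lemma. No gaps.
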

   \begin{proof}
If $\gammab_{a,b}$ is a Hamburger moment sequence,
then $\gamma_{a,b}(2n) > 0$ for all $n\in \zbb_+,$
which implies that $a > 0$ and $b\Ge 0$. Conversely,
if $a>0$ and $b\Ge 0,$ then applying the well-known
integral formula
   \begin{align} \label{log}
\int_{[0,1]} t^{\alpha} d t =
   \begin{cases}
\frac{1}{\alpha + 1} & \text{if } \alpha \in
(-1,\infty),
   \\[1ex]
\infty & \text{if } \alpha \in (-\infty, -1],
   \end{cases}
   \end{align}
one can easily verify that $\gammab_{a,b}$ is a
Hausdorff moment sequence with a representing measure
$\mu_{a,b}$.
   \end{proof}
The next property of moment sequences, whose prototype
appeared in \cite[Note on p.\ 780]{c-s-s}, can be
deduced from the Hamburger theorem \cite[Theorem~
6.2.2]{B-C-R}, the Stieltjes theorem \cite[Theorem~
6.2.5]{B-C-R} and the Hausdorff theorem \cite[Theorem~
4.6.11]{B-C-R}.
   \begin{lemma} \label{calka}
Let $(X,\mathcal A,\mu)$ be a measure space and
$\{\gamma_n\}_{n=0}^{\infty}$ be a sequence of
$\mathcal A$-measurable real valued functions on $X$.
Assume that $\{\gamma_n(x)\}_{n=0}^{\infty}$ is a
Hamburger $($resp., Stieltjes, Hausdorff\/$)$ moment
sequence for $\mu$-almost every $x\in X$ and $\int_X
|\gamma_n| d \mu < \infty$ for all $n\in \zbb_+$. Then
$\{\int_X \gamma_n d \mu\}_{n=0}^{\infty}$ is a
Hamburger $($resp., Stieltjes, Hausdorff\/$)$ moment
sequence.
   \end{lemma}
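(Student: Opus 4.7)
The plan is to reduce the lemma to the standard nonnegativity characterizations of the three classes of moment sequences. Recall that a real sequence $\{\alpha_n\}_{n=0}^{\infty}$ is a Hamburger moment sequence if and only if $\sum_{i,j=0}^N c_i c_j \alpha_{i+j} \Ge 0$ for all $N\in \zbb_+$ and all $c_0,\dots,c_N\in \rbb$; a Stieltjes moment sequence if and only if in addition $\sum_{i,j=0}^N c_i c_j \alpha_{i+j+1} \Ge 0$ under the same quantifiers; and a Hausdorff moment sequence if and only if $\sum_{j=0}^k (-1)^j \binom{k}{j} \alpha_{n+j} \Ge 0$ for all $n,k \in \zbb_+$. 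These three equivalences are precisely the Hamburger, Stieltjes and Hausdorff theorems already cited in the paragraph preceding the lemma, and each is a countable family of \emph{linear} nonnegativity conditions on the terms of the sequence, which is what makes the lemma amenable to a one-line integration argument.

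By hypothesis, there exists a single $\mu$-null set $E \in \mathcal A$ such that for every $x \in X \setminus E$ the sequence $\{\gamma_n(x)\}_{n=0}^{\infty}$ lies in the relevant class. It is worth emphasizing that $E$ does not depend on the test coefficients, since being a moment sequence is a single property of the whole sequence rather than a parametrized family of separate conditions; this automatically removes the only potentially subtle point in the proof and eliminates any need for a countable union of exceptional sets.

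To conclude, fix any admissible choice of test coefficients (either $(N, c_0,\dots,c_N)$ in the Hamburger or Stieltjes case, or $(n,k)$ in the Hausdorff case) and form the corresponding finite linear combination of the $\gamma_n$. This combination is a $\mu$-integrable function on $X$ by the hypothesis $\int_X |\gamma_n| d\mu < \infty$ and the finiteness of the sum, and it is nonnegative on $X\setminus E$. Integrating and pulling the finite sum outside the integral by linearity transfers the pointwise nonnegativity directly to the integrated sequence $\{\int_X \gamma_n d\mu\}_{n=0}^{\infty}$, and applying the respective characterization in the reverse direction yields the desired conclusion. There is no substantive obstacle: the only delicate issue (independence of the null set from the test data) is resolved automatically, and the rest is monotonicity of the integral.
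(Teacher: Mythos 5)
Your proof is correct and follows exactly the route the paper intends: the paper's own justification is the one-line remark that the lemma ``can be deduced from'' the Hamburger, Stieltjes and Hausdorff theorems, i.e.\ precisely the positivity/complete-monotonicity characterizations you integrate termwise. Your write-up simply supplies the (easy) details of that deduction.
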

   We are now in a position to prove the main result
of this section.
   \begin{theorem} \label{cdsubn}
Let $T\in \B(\hh)$ be a $2$-isometry such that $T^*T
(\ker T^*) \subseteq \ker T^*.$ Then $T'$ is a
subnormal contraction such that $T'^*T' (\ker T'^*)
\subseteq \ker T'^*$ and
   \begin{align} \label{cdformula}
T^{\prime *n}T^{\prime n} = (n
T^*T-(n-1)I)^{-1}=(T^{*n}T^n)^{-1}, \quad n \in
\zbb_+.
   \end{align}
   \end{theorem}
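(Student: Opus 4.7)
The plan is to prove the theorem in two different ways, mirroring the two-proof strategy announced in the paper. The first reduces the problem to an explicit spectral calculation inside the model of Theorem \ref{model}; the second is purely algebraic and rests on the characterizations of Lemma \ref{char-1}.

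\emph{First proof (via the model).} By Theorem \ref{model}(v)--(vi), $T\cong U\oplus W$, where $U$ is unitary and $W\in\B(\ell^2_{\mathcal M})$ is an operator valued unilateral weighted shift whose mutually commuting, positive, invertible weights are $W_n=\int_{[1,\eta]}\xi_n(x)\,E(d x)$. The statement is trivial for $U$ (since $U'=U$, $U^{*n}U^n=I$ and $U$ is subnormal), so Proposition \ref{rem} lets me work with $W$ alone. From \eqref{aopws2}, $W^*W$ is block-diagonal with $k$-th block $W_k^2$, so $W'=W(W^*W)^{-1}$ is again an operator valued unilateral weighted shift, with weights $W_n^{-1}$. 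Using \eqref{wmn} and the multiplicativity of the spectral integral, $W^{*n}W^n$ is block-diagonal with $k$-th block $\int_{[1,\eta]}\frac{1+(n+k)(x^2-1)}{1+k(x^2-1)}\,E(d x)$; a short scalar manipulation shows that this equals the $k$-th block of $nW^*W-(n-1)I$, and combined with the $2$-isometric identity $T^{*n}T^n=I+n(T^*T-I)$ this yields \eqref{cdformula}. Subnormality of $W'$ then follows from Theorem \ref{Lam}: for each $f\in\ell^2_{\mathcal M}$,
\begin{align*}
\|W'^{n}f\|^2 \;=\; \int_{[1,\eta]}\frac{1}{1+n(t-1)}\,d\mu_f(t),
\end{align*}
where $\mu_f$ is the scalar spectral measure of $W^*W$ at $f$. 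Lemma \ref{over} identifies $\{1/(1+n(t-1))\}_{n\Ge 0}$ as a Hausdorff moment sequence for each $t\Ge 1$, so Lemma \ref{calka} yields the Hausdorff (hence Stieltjes) moment property of $\{\|W'^n f\|^2\}$. The kernel condition for $T'$ is immediate from Proposition \ref{kc-Cu}, and contractivity is automatic by \eqref{2hypcon}.

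\emph{Second proof (model-free).} Setting $\triangle=T^*T-I\Ge 0$, Lemma \ref{char-1}(v) reads $T'\triangle=\triangle T$; an easy induction gives $T'^n\triangle=\triangle T^n$ and, by adjoints, $\triangle T'^{*n}=T^{*n}\triangle$. Combined with the $2$-isometric identity $T^{*n}T^n=I+n\triangle$, I would prove $T'^{*n}T'^n=(I+n\triangle)^{-1}$ by induction on $n$, organized around the target equation $(I+n\triangle)T'^{*n}T'^n=I$ and exploiting the dualities $T^{*n}T'^n=I$ and $T'^{*n}T^n=I$ that follow by iteration from $T^*T'=I$ and $T'^*T=I$. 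Once \eqref{cdformula} is in hand, Theorem \ref{Ag-Emb} applied to the contraction $T'$ reduces subnormality to the positivity of $B_m(T')=\sum_{k=0}^{m}(-1)^k\binom{m}{k}(I+k\triangle)^{-1}$; by functional calculus in $\triangle$, this amounts to the scalar inequality
\begin{align*}
\sum_{k=0}^{m}(-1)^k\binom{m}{k}\frac{1}{1+k\lambda}\;\Ge\; 0, \qquad \lambda\Ge 0,
\end{align*}
which is precisely Hausdorff's criterion applied to the moment sequence $\{1/(1+n\lambda)\}_{n\Ge 0}$ identified in Lemma \ref{over}.

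\emph{Main obstacle.} The step I expect to require the most care is the inductive derivation of \eqref{cdformula} in the second, model-free approach. The available commutation relations are one-sided: $T'^n\triangle=\triangle T^n$ moves $\triangle$ across $T'^n$ only from the right, while $\triangle T'^{*n}=T^{*n}\triangle$ does so across $T'^{*n}$ only from the left, so the induction cannot be finished simply by conjugating $(I+n\triangle)^{-1}$ by $T'$. The cleanest route is probably via the equivalent statement $T^{*n}T^n\cdot T'^{*n}T'^n=I$, combined with the auxiliary identities $T^n T'^{*n}=P_{\ran T^n}$ and $\ran T'^n\subseteq\ran T^n$; both require the kernel condition and can be obtained inductively from the equivalent form (ii) of Lemma \ref{char-1}. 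By comparison, the first proof is essentially routine spectral bookkeeping once the weights have been put in the form \eqref{wagi}.
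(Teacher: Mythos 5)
Your first proof is correct and is essentially the paper's Proof I: reduce via Theorem \ref{model} and Proposition \ref{rem} to the operator valued weighted shift $W$ with weights \eqref{wagi}, read off $W'$ as the shift with weights $W_n^{-1}$, and compute everything through the spectral measure $E$. The only cosmetic differences are that you conclude subnormality from \eqref{cdformula} via Lambert's theorem applied to all of $\ell^2_{\mathcal M}$ (the paper's Proof I instead tests only vectors of $\mathcal M$ and invokes the Stochel criterion for subnormal operator valued weighted shifts, reserving the Lambert argument for Proof II), and that the integral in your moment formula should run over $\sigma(W^*W)\subseteq[1,\eta^2]$ rather than $[1,\eta]$ --- a harmless slip. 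Since this proof is complete, the theorem is established.

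Your second proof is where the comparison is more interesting. Your diagnosis of the obstacle is exactly right: the relation $T'\triangle=\triangle T$ from Lemma \ref{char-1}(v) is one-sided, so one cannot simply conjugate $(I+n\triangle)^{-1}$ by $T'$. The paper resolves this differently from what you propose: starting from Lemma \ref{char-1}(iii) and \eqref{tpr3} it derives the identity $(T'^*T')^{-1}T'=2T'-T'T'^*T'$, and then runs the induction by multiplying $T'^{*n}T'^n\bigl(n(T'^*T')^{-1}-(n-1)I\bigr)=I$ by $T'^*$ on the left and $T'$ on the right; the displayed identity converts $\bigl(n(T'^*T')^{-1}-(n-1)I\bigr)T'$ into $(n+1)T'-nT'T'^*T'$ and the step closes after checking invertibility of $(n+1)I-nT'^*T'$. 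Your alternative route --- proving $T^{*n}T^n\,T'^{*n}T'^n=I$ from $T^{*n}T'^n=I$, $T'^nT^{*n}=P_{\ran T^n}$ and $\ran T'^n=\ran T^n$ --- is in fact viable: since $T'=T(T^*T)^{-1}$, the equality $\ran T'^n=\ran T^n$ follows inductively from $T^*T(\ran T^k)=\ran T^k$, which the kernel condition supplies, and then the idempotent $T'^nT^{*n}$ has range $\ran T'^n$ and kernel $(\ran T^n)^\perp$, hence is the orthogonal projection onto $\ran T^n$, giving $T^{*n}T^nT'^{*n}T'^n=T^{*n}P_{\ran T^n}T'^n=T^{*n}T'^n=I$. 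But as written this remains a sketch: the inductive propagation of the kernel condition to $\ran T^k$ for all $k$ is asserted, not proved, so the second proof is incomplete as it stands. Finally, your use of Agler's criterion (Theorem \ref{Ag-Emb}) together with the scalar complete monotonicity of $\{1/(1+n\lambda)\}_{n\Ge 0}$ is a correct and slightly more self-contained alternative to the paper's Lambert-plus-backward-integration argument; the two are equivalent by the Hausdorff moment theorem, as the paper itself notes.
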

   \begin{proof}[Proof I]
Applying Proposition~ \ref{kc-Cu}, we get $T'^*T'
(\ker T'^*) \subseteq \ker T'^*.$ By Proposition~
\ref{restrcd} and Theorem~ \ref{model}(a), it suffices
to consider the case of $T=W$, where $W$ is an
operator valued unilateral weighted shift on
$\ell^2_{\mathcal M}$ with weights
$\{W_n\}_{n=0}^{\infty}$ given by \eqref{wagi}. Since
the weights of $W$ are invertible, selfadjoint and
commuting, we infer from \eqref{aopws2} that
   \begin{align} \label{ajjaj}
W^{\prime}(h_0, h_1, \ldots) = (0, (W_0)^{-1} h_0,
(W_1)^{-1} h_1, \ldots), \quad (h_0, h_1, \ldots) \in
\ell^2_{\mathcal M},
   \end{align}
   which means that $W^{\prime}$ is an operator valued
unilateral weighted shift on $\ell^2_{\mathcal M}$
with weights $\{(W_n)^{-1}\}_{n=0}^{\infty}$. Thus, by
the commutativity of $\{W_n\}_{n=0}^{\infty}$ and the
inversion formula for spectral integral, we have (see
\eqref{wmndef})
   \begin{align*}
(W^{\prime})_{n,0} =
(W_{n,0})^{-1}\overset{\eqref{wmn}}=\int_{[1,\eta]}
\frac{1}{\sqrt{1+n(x^2-1)}} \, E(d x), \quad n \in
\zbb_+,
   \end{align*}
where $\eta:=\sup(\supp{E})$. This implies that
   \begin{align*}
\|(W^{\prime})_{n,0}f\|^2 = \int_{[1,\eta]}
\frac{1}{1+n(x^2-1)} \, \langle E(d x)f, f \rangle
\quad n \in \zbb_+, \, f \in \mathcal M.
   \end{align*}
Using Lemmata~ \ref{over} and \ref{calka}, we deduce
that $\{\|(W^{\prime})_{n,0}f\|^2\}_{n=0}^{\infty}$ is
a Stieltjes moment sequence for every $f\in \mathcal
M$. Therefore, by \cite[Corollary~ 3.3]{St} (cf.\
\cite[Theorem~ 3.2]{lam}), $W^{\prime}$ is a subnormal
operator which, by \eqref{2hypcon}, is a contraction.

It remains to prove \eqref{cdformula}. Using
\eqref{opvuws} and \eqref{aopws} as well as the fact
that the weights of $W$ are selfadjoint, invertible
and commuting, we get
   \begin{gather}  \notag
W^{*n}W^n = \bigoplus_{j=0}^{\infty}
(W_{n+j,j})^*W_{n+j,j} = \bigoplus_{j=0}^{\infty}
(W_{n+j,j})^2, \quad n \in \zbb_+,
   \\ \label{wprim}
W^{\prime *n} W^{\prime n} = \bigoplus_{j=0}^{\infty}
((W^{\prime})_{n+j,j})^2 = \bigoplus_{j=0}^{\infty}
(W_{n+j,j})^{-2}, \quad n \in \zbb_+.
   \end{gather}
This implies that $W^{\prime *n} W^{\prime n}
=(W^{*n}W^n)^{-1}$ for all integers $n\Ge 0$. Since
   \begin{align*}
(W_{n+j,j})^{-2} \overset{\eqref{wmn}} =
\int_{[1,\eta]} \frac{1+j(x^2-1)}{1+(n+j)(x^2-1)} \,
E(d x) \overset{\eqref{wagi}} = (n W_j^*W_j - (n-1)
I)^{-1}
   \end{align*}
for all $j, n \in \zbb_+$, we infer from
\eqref{aopws2} and \eqref{wprim} that
   \begin{align*}
W^{\prime *n} W^{\prime n} = (n W^*W-(n-1)I)^{-1},
\quad n \in \zbb_+,
   \end{align*}
which completes Proof I of Theorem~ \ref{cdsubn}.
   \end{proof}
   \begin{proof}[Proof II]
By Proposition~ \ref{kc-Cu}, $T'^*T' (\ker T'^*)
\subseteq \ker T'^*.$ It follows from \eqref{tpr3} and
Lemma~ \ref{char-1}(iii) that
   \begin{align*}
(T'^*T')^{-1}T' - 2T' + T' T'^*T' & = T^*T T' - 2T' +
T' (T^*T)^{-1}
   \\
   &= (T^*T^2 - 2T + T') (T^*T)^{-1} = 0,
   \end{align*}
which implies that
   \begin{align} \label{eqn}
(T'^*T')^{-1}T' = 2T' - T' T'^*T'.
   \end{align}
Since $T^*T \Ge I$ (see \cite[Lemma 1]{R-0}), it
follows that $\sigma(T^*T)\subseteq [1, \|T\|^2].$
Applying \eqref{tpr3}, we get
   \begin{align} \notag
n(T'^*T')^{-1}-(n-1)I &= nT^*T-(n-1)I
   \\  \label{ciach}
&=n\left(T^*T-\frac{n-1}{n}I\right), \quad n\in \nbb,
   \end{align}
which implies that the operator
$n(T'^*T')^{-1}-(n-1)I$ is invertible in $\B(\hh)$ for
every $n \in \nbb$.

Now we prove the first equality in \eqref{cdformula}
by induction on $n$. The cases $n=0,1$ are obvious.
Assume that this equality holds for a fixed $n \in
\nbb$. Then, by the induction hypothesis and
\eqref{ciach}, we have
   \begin{align*}
T^{\prime *n}T^{\prime n} (n(T'^*T')^{-1}-(n-1)I)=I,
\quad n\in \nbb.
   \end{align*}
Multiplying by $T'^*$ and $T'$ from the left-hand side
and the right-hand side, respectively, both sides of
the above equality, we get
   \begin{align} \notag
T'^*T'&=T^{\prime *(n+1)}T^{\prime n}
(n(T'^*T')^{-1}-(n-1)I)T'
   \\      \notag
&\hspace{-.8ex}\overset{\eqref{eqn}}= T^{\prime
*(n+1)}T^{\prime n}((n+1)T' - nT' T'^*T')
   \\            \label{ciach1}
&= T^{\prime *(n+1)}T^{\prime (n+1)}((n+1)I -
nT'^*T'), \quad n\in \nbb.
   \end{align}
Noting that
   \begin{align} \label{tpr5}
((n+1)I - nT'^*T') (T'^*T')^{-1}
\overset{\eqref{tpr3}}= (n+1)T^*T-nI, \quad n \in
\nbb,
   \end{align}
we infer from \eqref{ciach} that the operator $(n+1)I
- nT'^*T'$ is invertible in $\B(\hh)$ for every $n \in
\nbb$. This combined with \eqref{ciach1} yields
   \begin{align*}
T^{\prime *(n+1)}T^{\prime (n+1)} = T'^*T'((n+1)I -
nT'^*T')^{-1} \overset{\eqref{tpr5}}=
((n+1)T^*T-nI)^{-1}.
   \end{align*}
This completes the induction argument.

Since $T$ is a $2$-isometry, we deduce by using
induction on $n$ that (see also \cite[Proposition~
4.5]{Ja})
   \begin{align*}
T^{*n}T^n = nT^*T-(n-1)I, \quad n \in \zbb_+.
   \end{align*}
This combined with the first equality in
\eqref{cdformula} gives the second one in
\eqref{cdformula}.

It follows from the first equality in
\eqref{cdformula} and the Stone-von Neumann calculus
for selfadjoint operators that
   \begin{align} \label{tprhaus}
\|T'^n f\|^2 = \int_{[1, \|T\|^2]} \frac{1}{1 + n
(x-1)} \langle G(d x)f, f \rangle, \quad n\in \zbb_+,
\, f \in \hh,
   \end{align}
where $G$ is the spectral measure of $T^*T$. This
together with Lemmata~ \ref{over} and \ref{calka}
implies that $\{\|T'^n f\|^2\}$ is a Stieltjes moment
sequence for every $f\in \hh$. By Lambert's theorem
(see Theorem~ \ref{Lam}), $T'$ is a subnormal operator
which, by \eqref{2hypcon}, is a contraction. This
completes the proof.
   \end{proof}
   \begin{remark}
Regarding Theorem~ \ref{cdsubn}, it is worth
mentioning that due to \eqref{tprhaus}, for every
$f\in \hh,$ the Hausdorff moment sequence $\{\|T'^n
f\|^2\}_{n=0}^{\infty}$ comes from the Hausdorff
moment sequences $\{\gammab_{1,x-1} \colon x\in
[1,\infty)\}$ appearing in Lemma~ \ref{over} via the
integration procedure described in Lemma~ \ref{calka}.
   \hfill $\diamondsuit$
   \end{remark}
We now state a few corollaries to Theorem~
\ref{cdsubn}.
   \begin{corollary}
If $T\in \B(\hh)$ is a $2$-isometry satisfying the
kernel condition, then the family $\{T^{*n}T^n\colon
n\in \zbb_+\} \cup \{T^{\prime *n}T^{\prime n}\colon
n\in \zbb_+\}$ consists of commuting selfadjoint
operators.
   \end{corollary}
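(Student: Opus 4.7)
The plan is to observe that every operator in the stated family is a rational function of the single selfadjoint operator $T^*T$, so they all sit inside the abelian $C^*$-subalgebra of $\B(\hh)$ generated by $T^*T$, and commutativity plus selfadjointness are then automatic.

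Concretely, I would proceed in three short steps. First, I would record the identity
\begin{align*}
T^{*n}T^n = nT^*T - (n-1)I, \quad n\in \zbb_+,
\end{align*}
which holds for every $2$-isometry $T$ (a standard consequence of $I - 2T^*T + T^{*2}T^2 = 0$, proved by an easy induction on $n$; this is exactly the identity used at the very end of Proof II of Theorem~\ref{cdsubn}). Second, I would invoke the first equality of \eqref{cdformula} from Theorem~\ref{cdsubn} to write
\begin{align*}
T^{\prime *n}T^{\prime n} = (nT^*T - (n-1)I)^{-1}, \quad n\in \zbb_+.
\end{align*}

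Third, I would note that since $T^*T$ is selfadjoint and $\sigma(T^*T)\subseteq [1,\|T\|^2]$ (so $nT^*T - (n-1)I$ is a positive invertible operator for every $n\in \zbb_+$), both $nT^*T - (n-1)I$ and its inverse lie in the abelian unital $C^*$-subalgebra $\mathcal A\subseteq \B(\hh)$ generated by $T^*T$. Thus the whole family $\{T^{*n}T^n:n\in\zbb_+\}\cup\{T^{\prime*n}T^{\prime n}:n\in\zbb_+\}$ is contained in $\mathcal A$, and every element of this family is selfadjoint (either directly as $(T^n)^*T^n$, or as the inverse of a selfadjoint invertible operator). Commutativity of any two such operators then follows from the commutativity of $\mathcal A$.

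There is no real obstacle here; the content is essentially a bookkeeping consequence of Theorem~\ref{cdsubn}. The only minor point worth stating carefully is the invertibility of $nT^*T-(n-1)I$ for all $n$, which is immediate from $T^*T\Ge I$.
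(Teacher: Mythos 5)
Your argument is correct and is exactly the intended one: the paper states this as an immediate corollary of Theorem \ref{cdsubn}, whose formula \eqref{cdformula} exhibits every member of the family as $nT^*T-(n-1)I$ or its inverse, i.e., as an element of the abelian unital $C^*$-algebra generated by the selfadjoint operator $T^*T$. Nothing is missing.
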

   \begin{corollary} \label{nthpow}
Suppose $T\in \B(\hh)$ is a $2$-isometry satisfying
the kernel condition. Then for every $n\in \nbb,$
$T^n$ is a $2$-isometry satisfying the kernel
condition and $(T^n)'$ is a subnormal contraction
satisfying the kernel condition. In particular, this
is the case for $2$-isometric unilateral weighted
shifts.
   \end{corollary}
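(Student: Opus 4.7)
My strategy is to reduce the corollary to a single application of Theorem~\ref{cdsubn}, by verifying that $T^n$ itself belongs to the class considered there, i.e., that $T^n$ is a $2$-isometry satisfying the kernel condition. Once this is established, Theorem~\ref{cdsubn} applied to $T^n$ immediately gives that $(T^n)'$ is a subnormal contraction satisfying the kernel condition. The final claim about $2$-isometric unilateral weighted shifts then follows from the remark made just before the weighted translation example that any positive integral power of a unilateral weighted shift satisfies the kernel condition; in particular a $2$-isometric unilateral weighted shift is itself in the class covered by Theorem~\ref{cdsubn}, so the main assertion of the corollary applies.

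To check that $T^n$ is a $2$-isometry, I would invoke the identity $T^{*k}T^k = kT^*T-(k-1)I$ for every $k\in\zbb_+$, which is established by induction in Proof~II of Theorem~\ref{cdsubn}. Applying this with $k=n$ and $k=2n$ yields, after an elementary cancellation,
   \begin{align*}
B_2(T^n) = I - 2T^{*n}T^n + T^{*2n}T^{2n}
= I - 2\bigl(nT^*T-(n-1)I\bigr) + \bigl(2nT^*T-(2n-1)I\bigr) = 0,
   \end{align*}
so $T^n$ is a $2$-isometry. In particular, $T^{*n}T^n$ is an affine function of $T^*T$, so verifying the kernel condition $T^{*n}T^n(\ker(T^n)^*)\subseteq\ker(T^n)^*$ reduces to showing that $T^*T$ leaves $\ker(T^n)^*$ invariant.

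For the latter, the cleanest route is via the model of Theorem~\ref{model}. Since $T$ is non-unitary or unitary, I would write $T\cong U\oplus W$ with $U$ unitary and $W\in\B(\ell^2_{\mathcal M})$ an operator valued unilateral weighted shift with invertible positive commuting weights $\{W_n\}_{n=0}^\infty$ given by \eqref{wagi}; the unitary case being trivial. Then $T^n\cong U^n\oplus W^n$, and the formulas \eqref{aopws}--\eqref{aopws2} together with the commutativity of $\{W_n\}_{n=0}^\infty$ show directly that $(W^n)^*W^n$ acts ``diagonally'' on $\ell^2_{\mathcal M}$ (each coordinate subspace is preserved), while $\ker(W^n)^* = \mathcal M\oplus\cdots\oplus\mathcal M\oplus\{0\}\oplus\{0\}\oplus\cdots$ with $n$ copies of $\mathcal M$; hence $(W^n)^*W^n(\ker(W^n)^*)\subseteq\ker(W^n)^*$. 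Proposition~\ref{rem}(i) then lifts the kernel condition from $U^n\oplus W^n$ to $T^n$. The main obstacle I anticipate is not any single step but rather the bookkeeping in Step~2: a model-free argument would require the decomposition $\ker(T^n)^* = \bigoplus_{k=0}^{n-1} T^k(\ker T^*)$ (from Theorem~\ref{model}(iv) and Shimorin's Wold-type decomposition) together with an inductive proof that $T^*T$ leaves each $T^k(\ker T^*)$ invariant, using the identity $T^*T^2 = 2T-T'$ from Lemma~\ref{char-1}(iii) and the fact \eqref{factaa} to invert $T^*T$ on invariant closed subspaces; going through the model as above is considerably shorter.
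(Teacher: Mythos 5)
Your proposal is correct and follows essentially the same route as the paper: reduce via Theorem~\ref{model} (and Proposition~\ref{rem}) to an operator valued unilateral weighted shift $W$ with invertible weights, read off from \eqref{opvuws}--\eqref{aopws2} that $\ker(W^n)^*$ consists of the first $n$ coordinate copies of $\mathcal M$ and is preserved by the diagonal operator $(W^n)^*W^n$, and then apply Theorem~\ref{cdsubn} (together with Proposition~\ref{kc-Cu}) to $T^n$. The only cosmetic difference is that you verify that $T^n$ is a $2$-isometry directly from the identity $T^{*k}T^k=kT^*T-(k-1)I$, where the paper simply cites the known fact that positive integral powers of $2$-isometries are $2$-isometries.
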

   \begin{proof}
Using the fact that positive integral powers of
$2$-isometries are $2$-isometries (see \cite[Theorem~
2.3]{Ja}), Theorem \ref{cdsubn} and Proposition
\ref{kc-Cu}, it suffices to prove that $T^n$ satisfies
the kernel condition. In view of Theorem~
\ref{model}(a), we may assume without loss of
generality that $T=W$, where $W$ is as in this
theorem. Then $\ker W^*=\mathcal M \oplus \{0\} \oplus
\{0\} \oplus \ldots.$ Using induction and the formulas
\eqref{opvuws} and \eqref{aopws}, we deduce that for
every $n\in \nbb$, $W^n$ satisfies the kernel
condition.
   \end{proof}
The next corollary is of some importance because the
single equality of the form $p(T,T^*)=0,$ where $p$ is
a polynomial in two non-commuting variables of degree
$5$, yields subnormality of $T.$ The reader is
referred to \cite[Theorem~ 5.4 and Proposition~
7.3]{St0} for an example of an unbounded non-subnormal
formally normal operator annihilated by a polynomial
$p(z,\bar z)$ of (the lowest possible) degree $3$.
   \begin{corollary} \label{special}
Suppose $T\in \B(\hh).$ Then
   \begin{enumerate}
   \item[(i)] $T'$ is a subnormal contraction if
$T$ is left-invertible and
   \begin{align} \label{eqeq1}
(T^*T^2 T^*-2TT^* + I)T=0,
   \end{align}
   \item[(ii)] $T$ is a subnormal contraction if
$T$ is left-invertible and
   \begin{align} \label{eqeq2}
(T^*T^2 T^*-2T^*T + I)T=0.
   \end{align}
   \end{enumerate}
Moreover, in both cases $T$ and $T'$ satisfy the
kernel condition.
   \end{corollary}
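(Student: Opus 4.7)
My plan is to reduce both parts to Theorem~\ref{cdsubn} by way of the algebraic characterization of $2$-isometries satisfying the kernel condition provided by Lemma~\ref{char-1}. The two statements then amount to recognizing \eqref{eqeq1} as the Lemma~\ref{char-1}(iv) condition for $T$, and \eqref{eqeq2} as the Lemma~\ref{char-1}(iii) condition for the Cauchy dual $T'$.

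For part (i), the hypothesis \eqref{eqeq1} is literally condition (iv) of Lemma~\ref{char-1}. Since $T$ is left-invertible, Lemma~\ref{char-1} yields at once that $T$ is a $2$-isometry satisfying $T^*T(\ker T^*) \subseteq \ker T^*$. Theorem~\ref{cdsubn} then delivers that $T'$ is a subnormal contraction and that both $T$ and $T'$ satisfy the kernel condition (the latter via Proposition~\ref{kc-Cu}).

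For part (ii), the key step is to verify that \eqref{eqeq2} is precisely condition (iii) of Lemma~\ref{char-1} written for $T'$ in place of $T$. Since $T$ is left-invertible, so is $T'$ with $(T')' = T$ by \eqref{tpr0}. Lemma~\ref{char-1}(iii) applied to $T'$ states that $T'$ is a $2$-isometry satisfying the kernel condition if and only if
\begin{equation*}
T - 2T' + (T')^*(T')^2 = 0.
\end{equation*}
Using $(T')^* = (T^*T)^{-1}T^*$ together with \eqref{tpr3}, one computes $(T')^*(T')^2 = (T^*T)^{-1}T(T^*T)^{-1}$, so the displayed equation becomes $T - 2T(T^*T)^{-1} + (T^*T)^{-1}T(T^*T)^{-1} = 0$. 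Multiplying on the right and then on the left by $T^*T$ (both operations are reversible, as $T^*T$ is invertible in $\B(\hh)$) turns this identity into $T^*T^2T^*T - 2T^*T^2 + T = 0$, which is exactly \eqref{eqeq2}.

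Once this equivalence is established, part (ii) follows by applying Theorem~\ref{cdsubn} to $T'$: its Cauchy dual $(T')' = T$ is then a subnormal contraction, $T$ satisfies the kernel condition, and $T'$ does so as well (being a $2$-isometry with the kernel condition by the above reduction). The only delicate point is the algebraic manipulation identifying \eqref{eqeq2} with Lemma~\ref{char-1}(iii) for $T'$; once seen, both assertions are immediate consequences of Theorem~\ref{cdsubn}.
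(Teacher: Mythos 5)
Your proposal is correct and follows essentially the same route as the paper: part (i) is Lemma~\ref{char-1}(iv) for $T$ fed into Theorem~\ref{cdsubn}, and part (ii) is obtained by applying the same reduction to $T'$ via \eqref{tpr0} (the paper invokes part (i) for $T'$, i.e.\ Lemma~\ref{char-1}(iv), while you verify the equivalent condition (iii) of that lemma for $T'$ with the algebra written out — your computation checks out). The only difference is that you make explicit the identification of \eqref{eqeq2} with the corresponding condition on $T'$, which the paper leaves to the reader.
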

   \begin{proof}
(i) Combining Proposition~ \ref{kc-Cu} and Lemma~
\ref{char-1} with Theorem~ \ref{cdsubn} yields (i) and
shows that $T$ and $T'$ satisfy the kernel condition.

(ii) Apply (i) to $T'$ in place of $T$ and use
\eqref{tpr0}.
   \end{proof}
   \begin{remark} \label{irownii}
A careful look at the proof of Corollary~
\ref{special} reveals that the assertions (i) and (ii)
are equivalent. In fact, a left-invertible operator
$T\in \B(\hh)$ satisfies \eqref{eqeq1} (resp.\
\eqref{eqeq2}) if and only if $T$ (resp.\ $T'$) is a
$2$-isometry which satisfies the kernel condition.
   \hfill $\diamondsuit$
   \end{remark}
The following example shows that some classical
operators on Hilbert spaces of analytic functions are
closely related to Corollary~ \ref{special}.
   \begin{example}
For $l \in \nbb,$ consider the reproducing kernel
   \begin{align*}
\kappa_{l}(z, w) = \frac{1}{(1-z\overline{w})^{l}},
\quad z,w \in \mathbb D,
   \end{align*}
where $\mathbb D = \{z\in \C \colon |z|< 1\}$ and
$\overline{w}$ stands for the complex conjugate of
$w.$ Let $\mathscr H_{l}$ denote reproducing kernel
Hilbert space associated with $\kappa_{l}$ and let
$M_{z,l}$ be the operator of multiplication by the
coordinate function $z$ on $\mathscr H_{l}$. It is
well-known that $M_{z, l}$ is a subnormal contraction
for every $l \in \nbb$ (see \cite[Section~ 1]{Ag}).
Note that $\mathscr H_{1}$ is the Hardy space $H^2$
and the operator $M_{z, 1}$ ({\em Szeg\"{o} shift}) is
an isometry, i.e., $M^*_{z, 1}M_{z, 1}=I.$ In turn,
$\mathscr H_{2}$ is the Bergman space. Using the
standard orthonormal basis of $\mathscr H_{2},$ we
deduce that the operator $M_{z, 2}$ ({\em Bergman
shift}) is unitarily equivalent to the unilateral
weighted shift with weights
$\Big\{\sqrt{\frac{n+1}{n+2}}\/\Big\}_{n=0}^{\infty}.$
It is now easily seen that $M_{z, 2}$ is
left-invertible and satisfies the following identity
   \begin{align*}
M^*_{z, 2}M^2_{z, 2}M^*_{z, 2} -2M^*_{z, 2}M_{z, 2} +
I =0.
   \end{align*}
This together with Remark~ \ref{irownii} implies that
the Cauchy dual $M'_{z, 2}$ of $M_{z, 2}$ is a
$2$-isometry satisfying the kernel condition (cf.\
Remark~ \ref{Dirsh}).
   \hfill $\diamondsuit$
   \end{example}
Below we show that Theorem~ \ref{cdsubn} is no longer
true for $3$-isometries. Since $m$-isometries are
$(m+1)$-isometries (see \cite[p.\ 389]{Ag-St}), we see
that Theorem~ \ref{cdsubn} is not true for
$m$-isometries with $m\Ge 3.$
   \begin{example} \label{treiso}
Let $T$ be the unilateral weighted shift in
$\ell^2(\zbb_+)$ with weights
$\Big\{\sqrt{\frac{\phi(n+1)}{\phi(n)}}\/\Big\}_{n=0}^{\infty}$,
where $\phi(n)=n^2+1$ for $n\in \zbb_+$. It is a
matter of routine to verify that $T$ is a $3$-isometry
(one can also use \cite[Theorem~ 1]{Ab-Le}). Clearly,
$T$ is left-invertible and satisfies the kernel
condition. Since the Cauchy dual $T'$ of $T$ is the
unilateral weighted shift with weights $\Big\{\sqrt{
\frac{\phi(n)}{\phi(n+1)}}\/ \Big\}_{n=0}^{\infty}$
(see \eqref{ajjaj}), we verify easily that $T'$ is a
contraction and
   \begin{align*}
\sum_{n=0}^4 (-1)^n {4 \choose n}\|T'^n e_0\|^2 =
\sum_{n=0}^4 (-1)^n {4 \choose n} \frac{1}{\phi(n)} <
0.
   \end{align*}
In view of Theorem~ \ref{Ag-Emb}, the Cauchy dual
operator $T'$ is not subnormal.
   \hfill $\diamondsuit$
   \end{example}
   \section{\label{Sec7}The Cauchy dual subnormality problem for
quasi-Brownian isometries} This section deals with a
class of $2$-isometries which we propose to call
quasi-Brownian isometries. Our goal here is to solve
the Cauchy dual subnormality problem affirmatively
within this class (see Theorem~ \ref{BrownianG}). It
turns out that quasi-Brownian isometries do not
satisfy the kernel condition unless they are
isometries (see Corollary~ \ref{binkc}). In Section~
\ref{Sec11}, we exhibit an example of a $2$-isometry
$T \in \B(\hh)$ such that $T$ does not satisfy the
kernel condition, $T$ is not a quasi-Brownian isometry
and the Cauchy dual operator $T'$ is a subnormal
contraction (see Example~ \ref{nbnkcsub}).

We say that an operator $T\in \B(\hh)$ is a {\em
quasi-Brownian isometry} if $T$ is a $2$-isometry such
that
   \begin{align} \label{qbdef}
\text{$\triangle_T T = \triangle_T^{1/2} T
\triangle_T^{1/2},$ where $\triangle_T :=T^*T-I.$}
   \end{align}
(Recall that $\triangle_T \Ge 0$ for any $2$-isometry
$T.$) In \cite{Maj} such operators are called {\em
$\triangle_T$-regular} $2$-isometries. As we see below
(Corollary~ \ref{brareqbr} and Example~ \ref{mewa}),
the notion of a quasi-Brownian isometry generalizes
that of a Brownian isometry introduced by Agler and
Stankus in \cite{Ag-St}; the latter notion arose in
the study of the time shift operator on a modified
Brownian motion process. Here we do not include the
rather technical definition of a Brownian isometry as
we do not need it. Instead, we define a Brownian
isometry by using \cite[Theorem~ 5.48]{Ag-St}. Namely,
an operator $T\in \B(\hh)$ is said to be a {\it
Brownian isometry} if $T$ is a $2$-isometry such that
   \begin{align} \label{defbi}
\triangle_T \triangle_{T^*} \triangle_T = 0.
   \end{align}

Before proving the main result of this section, we
state slightly improved versions of \cite[Proposition~
5.1]{Maj} and \cite[Proposition~ 5.37 and Theorem~
5.48]{Ag-St}.
   \begin{theorem} \label{Maj-Mb-Su}
If $T\in \B(\hh),$ then the following conditions are
equivalent{\em :}
   \begin{enumerate}
   \item[(i)] $T$ is a quasi-Brownian isometry $($resp.,
Brownian isometry$)$,
   \item[(ii)] $T$ has the block matrix form
   \begin{align} \label{brep}
T = \left[\begin{array}{cc} {V } & {E}\\ {0} & {U}
\end{array}\right]
   \end{align}
with respect to an orthogonal decomposition $\hh=\hh_1
\oplus \hh_2$ $($one of the summands may be absent$)$,
where $V\in \B(\hh_1),$ $E\in \B(\hh_2,\hh_1)$ and
$U\in \B(\hh_2)$ are such that
   \begin{gather}  \label{vsvi2}
\text{$V^*V=I$, $V^*E=0$, $U^*U=I$ and $UE^*E=E^*EU$}
   \\ \label{vsvi} \text{$($resp., $V^*V=I$, $V^*E=0$, $U^*U=I=UU^*$
and $UE^*E=E^*EU$$),$}
   \end{gather}
   \item[(iii)] $T$ is either isometric or it
has the block matrix form \eqref{brep} with respect to
a nontrivial orthogonal decomposition $\hh=\hh_1
\oplus \hh_2,$ where $V\in \B(\hh_1),$ $E\in
\B(\hh_2,\hh_1)$ and $U\in \B(\hh_2)$ satisfy
\eqref{vsvi2} $($resp., \eqref{vsvi}$)$ and
\mbox{$\ker E = \{0\}$}.
   \end{enumerate}
   \end{theorem}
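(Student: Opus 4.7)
The plan is to prove the theorem through the chain (ii)$\,\Rightarrow\,$(i), (i)$\,\Rightarrow\,$(ii), (iii)$\,\Rightarrow\,$(ii), and (ii)$\,\Rightarrow\,$(iii), where the first two implications are essentially repackagings of the structure theorems in \cite{Maj, Ag-St} and the genuine new content lies in the last one.

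For (ii)$\,\Rightarrow\,$(i), I would perform direct block-matrix computations. Using $V^*V=I$ and $V^*E=0$ one obtains $T^*T=I\oplus(I+E^*E)$, so $\triangle_T=0\oplus E^*E$. A second matrix product, simplified by $U^*E^*EU=(U^*U)E^*E=E^*E$ (immediate from $UE^*E=E^*EU$ together with $U^*U=I$), yields $T^{*2}T^2=I\oplus(I+2E^*E)=2T^*T-I$, so $T$ is a $2$-isometry. The same commutation ensures that $U$ commutes with $|E|=(E^*E)^{1/2}$, so both $\triangle_T T$ and $\triangle_T^{1/2} T\triangle_T^{1/2}$ equal $0\oplus E^*EU$, which gives \eqref{qbdef}. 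In the Brownian case the extra condition $UU^*=I$ makes the $(2,2)$-block of $\triangle_{T^*}$ vanish, and $\triangle_T\triangle_{T^*}\triangle_T=0$ follows from a short product computation.

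For (i)$\,\Rightarrow\,$(ii), I would take $\hh_1:=\ker\triangle_T$ and $\hh_2:=\hh_1^\perp$. The subspace $\hh_1$ is $T$-invariant: if $\triangle_T h=0$ then the $2$-isometry identity $T^{*2}T^2=2T^*T-I$ gives $\|T^2 h\|^2=\|h\|^2=\|Th\|^2$, so $Th\in\ker\triangle_T$. Writing $T$ as a block matrix with respect to $\hh_1\oplus\hh_2$, the vanishing of $\triangle_T$ on $\hh_1$ immediately yields both $V^*V=I$ and $V^*E=0$. The remaining identities $U^*U=I$ and $UE^*E=E^*EU$ (respectively, $UU^*=I$ in the Brownian case) are then extracted from \eqref{qbdef} (respectively, \eqref{defbi}) exactly as in \cite[Proposition 5.1]{Maj} (respectively, \cite[Theorem 5.48]{Ag-St}). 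The only enhancement beyond the cited statements is the convention that one summand may be absent, which handles the isometric degenerate case $\triangle_T=0$ by taking $\hh_2=\{0\}$.

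The implication (iii)$\,\Rightarrow\,$(ii) is immediate (when $T$ is isometric, take $\hh_2=\{0\}$; otherwise the two statements coincide). For (ii)$\,\Rightarrow\,$(iii), assume $T$ is not isometric, so $E\neq 0$. The plan is to refine the decomposition by setting $\hh_1':=\hh_1\oplus\ker E$ and $\hh_2':=\hh_2\ominus\ker E$. Two observations drive the argument: first, $\ker E$ is $U$-invariant, since for $h\in\ker E$ we have $E^*EUh=UE^*Eh=0$, so $\|EUh\|^2=\langle E^*EUh,Uh\rangle=0$; second, $U$ commutes with the selfadjoint operator $E^*E$ and hence, by the spectral theorem, with the orthogonal projection $P_{\ker E}=P_{\ker E^*E}$, so that $\hh_2'$ reduces $U$ within $\hh_2$. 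A routine verification then shows that $V':=T|_{\hh_1'}$ is an isometry, that $T$ admits the block decomposition $\bigl(\begin{smallmatrix} V' & E' \\ 0 & U'\end{smallmatrix}\bigr)$ with respect to $\hh_1'\oplus\hh_2'$, that the new triple $(V',E',U')$ satisfies \eqref{vsvi2} (respectively, \eqref{vsvi}), and that $\ker E'=\{0\}$ by construction. The hardest part, in my estimation, is precisely this verification: one must confirm all four identities in \eqref{vsvi2} for $(V',E',U')$, and in the Brownian case also check that $U'$ inherits unitarity from $U$, which is exactly where the fact that $\hh_2'$ \emph{reduces} (rather than merely being invariant for) $U$ becomes essential.
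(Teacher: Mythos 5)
Your proposal is correct, and it is organized differently from the paper's own proof, which disposes of the theorem in three lines: (i)$\Rightarrow$(iii) is attributed to \cite[Proposition 5.1]{Maj} (resp.\ the proof of \cite[Theorem 5.48]{Ag-St}), (iii)$\Rightarrow$(ii) is declared obvious, and (ii)$\Rightarrow$(i) is left as a matter of routine. Your (ii)$\Rightarrow$(i) computation is precisely the routine the paper omits, and your (i)$\Rightarrow$(ii) via $\hh_1=\ker\triangle_T$ is the same canonical decomposition that underlies the cited results, so to that extent the approaches coincide. The genuine divergence is your separate (ii)$\Rightarrow$(iii) step: the paper never needs it because the references already deliver the normalized form with $\ker E=\{0\}$, whereas you recover it by hand, absorbing $\ker E$ into $\hh_1$ and using that $U$ commutes with $E^*E$, hence with $P_{\ker E}$, so that $\hh_2\ominus\ker E$ reduces $U$; this buys a self-contained passage from an arbitrary block representation satisfying \eqref{vsvi2} to the normalized one, at the cost of one extra implication in the cycle. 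All the verifications you flag as routine do go through (in particular $U^*E^*EU=E^*E$, the commutation of $U$ with $(E^*E)^{1/2}$, and the fact that $U'$ inherits $U'^*U'=I$, resp.\ unitarity, because $\hh_2\ominus\ker E$ reduces rather than merely being invariant for $U$). One observation would have let you skip the refinement entirely: with $\hh_1:=\ker\triangle_T$ one gets $\ker E=\{0\}$ for free, since $Eh_2=0$ forces $\|Th_2\|=\|Uh_2\|=\|h_2\|$ and hence $h_2\in\ker\triangle_T\cap\hh_2=\{0\}$; thus your (i)$\Rightarrow$(ii) already proves (i)$\Rightarrow$(iii), which is exactly the chain the paper uses.
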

   \begin{proof}
That (i) implies (iii) follows from \cite[Proposition~
5.1]{Maj} (resp., the proof of \cite[Theorem~
5.48]{Ag-St}). Obviously, (iii) implies (ii). Finally,
it is a matter of routine to show that (ii) implies
(i).
   \end{proof}
   \begin{corollary} \label{brareqbr}
Every Brownian isometry is a quasi-Brownian isometry.
   \end{corollary}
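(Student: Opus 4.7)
The plan is to deduce Corollary \ref{brareqbr} as an immediate consequence of Theorem \ref{Maj-Mb-Su}. The key observation is that the block matrix characterizations of Brownian and quasi-Brownian isometries differ only in one condition: for Brownian isometries the corner operator $U$ is required to be unitary ($U^*U=I=UU^*$), while for quasi-Brownian isometries $U$ only needs to be an isometry ($U^*U=I$). All other structural conditions ($V^*V=I$, $V^*E=0$, and $UE^*E=E^*EU$) are identical in \eqref{vsvi2} and \eqref{vsvi}.

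Thus, given a Brownian isometry $T\in\B(\hh)$, I would first apply the implication (i)$\Rightarrow$(ii) of Theorem \ref{Maj-Mb-Su} in the Brownian case to produce an orthogonal decomposition $\hh=\hh_1\oplus \hh_2$ (possibly with one summand absent) and operators $V\in\B(\hh_1)$, $E\in\B(\hh_2,\hh_1)$, $U\in\B(\hh_2)$ satisfying \eqref{vsvi} such that $T$ has the block form \eqref{brep}. Since \eqref{vsvi} is strictly stronger than \eqref{vsvi2}, the same decomposition and operators automatically satisfy the quasi-Brownian conditions \eqref{vsvi2}. Invoking the implication (ii)$\Rightarrow$(i) of Theorem \ref{Maj-Mb-Su} in the quasi-Brownian case then yields that $T$ is a quasi-Brownian isometry.

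There is essentially no obstacle; the content of the corollary is entirely absorbed into the two directions of Theorem \ref{Maj-Mb-Su}, which were stated in a form precisely designed to make this comparison transparent. The only thing worth a brief sentence in the write-up is the trivial remark that $UU^*=I$ is an extra hypothesis one simply forgets when passing from \eqref{vsvi} to \eqref{vsvi2}, so no additional verification is needed.
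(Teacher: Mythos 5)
Your proposal is correct and is exactly the argument the paper intends: Corollary \ref{brareqbr} is stated without a separate proof precisely because it follows by reading the block characterizations in Theorem \ref{Maj-Mb-Su} and noting that \eqref{vsvi} implies \eqref{vsvi2} (one just drops $UU^*=I$). Nothing further is needed.
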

   \begin{remark}
Note that if $T\in \B(\hh)$ has the block matrix form
\eqref{brep}, where $V\in \B(\hh_1),$ $E\in
\B(\hh_2,\hh_1)$ and $U\in \B(\hh_2)$ satisfy
\eqref{vsvi2}, then
   \begin{align*}
\|T^*T-I\|=\|E\|^2.
   \end{align*}
This means that the norm of the operator $E$ appearing
in \cite[Proposition~ 5.37]{Ag-St} and
\cite[Proposition~ 5.1]{Maj} must equal $1.$
   \hfill $\diamondsuit$
   \end{remark}
The converse to Corollary~ \ref{brareqbr} is not true.
   \begin{example}  \label{mewa}
Let $V\in \B(\hh_1),$ $E\in \B(\hh_2,\hh_1)$ and $U\in
\B(\hh_2)$ be isometric operators such that $U$ is not
unitary and $V^*E=0$ (which is always possible). By
Theorem~ \ref{Maj-Mb-Su}, we see that the
corresponding operator $T$ given by \eqref{brep} is a
quasi-Brownian isometry. However, $T$ is not a
Brownian isometry because
   \begin{align*}
\triangle_T \triangle_{T^*} \triangle_T =
\left[\begin{array}{cc} 0 & 0 \\ {0} & UU^* - I
\end{array}\right],
   \end{align*}
which, by the choice of $U$, implies that $\triangle_T
\triangle_{T^*} \triangle_T \neq 0$.
   \hfill $\diamondsuit$
   \end{example}
Now we are in a position to show that the Cauchy dual
operator of a quasi-Brownian isometry is a subnormal
contraction.
   \begin{theorem} \label{BrownianG}
   Suppose $T\in \B(\hh)$ is a quasi-Brownian
isometry. Then $T'$ is a subnormal contraction such
that
   \begin{align} \label{formula-2}
T'^{*n}T'^n = (I+T^*T)^{-1}(I+(T^*T)^{1- 2n}), \quad n
\in \zbb_+.
   \end{align}
   \end{theorem}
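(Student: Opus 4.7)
My plan is to use the block-matrix model of Theorem \ref{Maj-Mb-Su}: write
\[ T = \begin{bmatrix} V & E \\ 0 & U \end{bmatrix} \]
on $\hh = \hh_1 \oplus \hh_2$, where $V^*V = U^*U = I$, $V^*E = 0$, and $UE^*E = E^*EU$. A direct computation yields $T^*T = I_{\hh_1} \oplus (I+E^*E)$, so on setting $P := (I+E^*E)^{-1}$, which commutes with $E^*E$, $U$, and $U^*$ by the quasi-Brownian hypothesis, the Cauchy dual $T' = T(T^*T)^{-1}$ is upper triangular with diagonal blocks $V$ and $PU$. A routine induction then produces
\[ T'^n = \begin{bmatrix} V^n & G_n \\ 0 & U^n P^n \end{bmatrix}, \quad G_n := \sum_{k=0}^{n-1} V^k E P^{n-k} U^{n-1-k}. \]
With $Q := I + E^*E = P^{-1}$, the right-hand side of \eqref{formula-2} becomes the block-diagonal operator $I_{\hh_1} \oplus P(I+P)^{-1}(I+P^{2n-1})$, so the task reduces to computing $T'^{*n}T'^n$ and matching its $(2,2)$ block.

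The $(1,1)$ and $(1,2)$ blocks of $T'^{*n}T'^n$ are $I$ and $0$ respectively: the first is $V^{*n}V^n$, and the second vanishes because iterating $V^*E = 0$ gives $V^{*m}E = 0$ for all $m \Ge 1$. The heart of the argument, and the step I expect to be the main obstacle, is handling
\[ G_n^*G_n = \sum_{k,l=0}^{n-1} U^{*(n-1-k)} P^{n-k} E^* V^{*k} V^l E P^{n-l} U^{n-1-l}. \]
I expect every off-diagonal term ($k \neq l$) to vanish: taking the adjoint of $V^*E = 0$ gives $E^*V = 0$, so both $E^*V^j$ and $V^{*j}E$ vanish for $j \Ge 1$, eliminating the $k<l$ contributions (where $V^{*k}V^l = V^{l-k}$) and the $k>l$ contributions (where $V^{*k}V^l = V^{*(k-l)}$). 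The diagonal terms then collapse, using that $U, U^*, P$ all commute with $E^*E$, to $G_n^*G_n = E^*E \sum_{j=1}^n P^{2j}$. Substituting $E^*E = P^{-1} - I$ produces an alternating series that telescopes against the $P^{2n}$ tail, leaving
\[ G_n^*G_n + P^{2n} = P - P^2 + P^3 - \cdots + P^{2n-1} = P(I+P)^{-1}(I+P^{2n-1}), \]
which is precisely the required $\hh_2$-block, establishing \eqref{formula-2}.

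For subnormality, $T'$ is automatically a contraction by \eqref{2hypcon}, so it suffices to invoke Lambert's Theorem \ref{Lam}. Applying the spectral theorem to $T^*T$ converts \eqref{formula-2} into
\[ \|T'^n f\|^2 = \int_{[1,\|T\|^2]} \frac{1+x^{1-2n}}{1+x}\, d\mu_f(x), \quad f \in \hh, \, n \in \zbb_+, \]
where $\mu_f$ is the scalar spectral measure of $T^*T$ associated with $f$. For each fixed $x \Ge 1$, the decomposition $\frac{1+x^{1-2n}}{1+x} = \frac{1}{1+x}\cdot 1^n + \frac{x}{1+x}(x^{-2})^n$ exhibits the integrand as the $n$th moment of the Hausdorff probability measure $\frac{1}{1+x}\delta_1 + \frac{x}{1+x}\delta_{x^{-2}}$ on $[0,1]$. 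Lemma \ref{calka} then promotes this pointwise observation to the statement that $\{\|T'^n f\|^2\}_{n=0}^\infty$ is a Stieltjes moment sequence for every $f \in \hh$, and Theorem \ref{Lam} delivers subnormality of $T'$.
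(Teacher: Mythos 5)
Your proposal is correct and follows essentially the same route as the paper: both rest on the block model of Theorem \ref{Maj-Mb-Su}, the identity $T^*T=I\oplus(I+E^*E)$, and the final passage from \eqref{formula-2} to subnormality via the decomposition $\frac{1+x^{1-2n}}{1+x}=\frac{1}{1+x}+\frac{x}{1+x}(x^{-2})^{n}$, Lemma \ref{calka} and Lambert's theorem. The only difference is in verifying \eqref{formula-2}: you compute $T'^{n}$ in closed form and evaluate $G_n^*G_n$ by killing the cross terms and telescoping, whereas the paper runs a one-step induction on $T'^{*(n+1)}T'^{(n+1)}=T'^*(I\oplus r_n(Q))T'$, which is shorter but yields the same identity.
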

   \begin{proof}
It follows from Theorem~ \ref{Maj-Mb-Su} that $T$ has
the block matrix form \eqref{brep} with respect to an
orthogonal decomposition $\hh=\hh_1 \oplus \hh_2$,
where $V\in \B(\hh_1),$ $E\in \B(\hh_2,\hh_1)$ and
$U\in \B(\hh_2)$ satisfy \eqref{vsvi2}. Without loss
of generality, we may assume that $T$ is not an
isometry, which implies that $E\neq 0$. By
\eqref{vsvi2}, we have
   \begin{align} \label{ipee}
T^*T=I \oplus Q, \text{ where } Q=I+E^*E.
   \end{align}
Clearly, $Q$ is selfadjoint and invertible in
$\B(\hh).$ For $n \in \zbb_+,$ we define the rational
function $r_n\colon [1,\infty) \to (0,\infty)$ by
   \begin{align*}
r_n(x) = \frac{1+{x^{1-2n}}}{1+x} , \quad x \in [1,
\infty).
   \end{align*}
Since, by \eqref{ipee} (or by \cite[Lemma 1]{R-0}),
$T^*T \Ge I$ we see that $\sigma(T^*T)\subseteq [1,
\|T\|^2].$ Applying the functional calculus (see
\cite[Theorem~ VIII.2.6]{Con}), we deduce that
\eqref{formula-2} is equivalent~ to
   \begin{align}  \label{formula-A}
T'^{*n}T'^n = r_n(T^*T), \quad n \in \zbb_+.
   \end{align}
We prove \eqref{formula-A} by induction on $n.$ The
case $n=0$ is obviously true. Suppose that
\eqref{formula-A} holds for some unspecified $n\in
\zbb_+.$ Using \eqref{ipee} and functional calculus,
we get
   \begin{align} \label{rational}
r_k(T^*T) = I \oplus r_k(Q), \quad k\in \zbb_+.
   \end{align}
It is a matter of routine to verify that
   \begin{align} \label{rational2}
T' = \left[\begin{array}{cc} {V } & {EQ^{-1}}\\
{0} & {UQ^{-1}}
\end{array}\right].
   \end{align}
The induction hypothesis, the equalities $V^*E=0$ and
$UQ=QU$ and the functional calculus yield
   \begin{align*}
   T'^{*(n+1)}T'^{(n+1)} &\overset{\eqref{rational}}=
   T'^*(I \oplus r_n(Q))T'
   \\
&\overset{\eqref{rational2}}= \left[\begin{array}{cc} {V^* } & {0}\\
{Q^{-1}E^*} & {Q^{-1}U^*}
\end{array}\right]\left[\begin{array}{cc} {I } & {0}
   \\ {0} & {r_n(Q)}
\end{array}\right] \left[\begin{array}{cc} {V } & {EQ^{-1}}
   \\ {0} & {UQ^{-1}} \end{array}\right]
   \\[.2ex]
&\hspace{.7ex}= I \oplus (Q^{-1}-Q^{-2} +
Q^{-2}r_n(Q))
   \\
&\hspace{.7ex}= I \oplus r_{n+1}(Q)
   \\
&\overset{\eqref{rational}}= r_{n+1}(T^*T),
   \end{align*}
which completes the induction argument.

By \eqref{2hypcon}, $T'$ is a contraction. It follows
from \eqref{formula-A} and the Stone-von Neumann
calculus for selfadjoint operators that
   \begin{align} \label{tprhaus-qB}
\|T'^n f\|^2 = \int_{[1,\|T\|^2]} \left(\frac{1}{1+x}
+ \frac{x}{1+x} (x^{-2})^n\right)\langle G(d x)f, f
\rangle, \quad n\in \zbb_+, \, f \in \hh,
   \end{align}
where $G$ is the spectral measure of $T^*T.$ Now
applying Lemma~ \ref{calka}, we see that $\{\|T'^n
f\|^2\}_{n=0}^{\infty}$ is a Stieltjes moment sequence
for every $f\in \hh$. This together with Lambert's
theorem (see Theorem~ \ref{Lam}) completes the proof.
   \end{proof}
The techniques developed in proofs of Theorems~
\ref{cdsubn} and \ref{BrownianG} give the following.
   \begin{corollary}  \label{binkc}
Suppose $T\in \B(\hh)$ is a quasi-Brownian isometry
that satisfies the kernel condition. Then $T$ is an
isometry.
   \end{corollary}
   \begin{proof} Let $G$ be the spectral measure of
$T^*T$. Note that $0 \Le \frac{1}{1 + n (x-1)} \Le 1$
for all $x\in [1,\infty)$ and $n\in \zbb_+$, and
$\lim_{n\to \infty} \frac{1}{1 + n (x-1)} =
\chi_{\{1\}}(x)$ for all $x \in [1,\infty)$. Applying
Lebesgue's dominated convergence theorem to
\eqref{tprhaus}, we deduce that the sequence
$\{T^{\prime *n}T^{\prime n}\}_{n=1}^{\infty}$ of
positive operators converges to $G(\{1\})$ in the weak
and consequently in the strong operator topology. A
similar argument applied to \eqref{tprhaus-qB} shows
that $\{T^{\prime *n}T^{\prime n}\}_{n=1}^{\infty}$
converges to $\frac{1}{2}G(\{1\}) + (I + T^*T)^{-1}$
in the strong operator topology. Hence $G(\{1\})=I$
and thus $T$ is an isometry.
   \end{proof}
The so-called Brownian shifts introduced in
\cite[Definition 5.5]{Ag-St} are examples of Brownian
isometries which are not isometric, and thus by
Corollary~ \ref{binkc} they do not satisfy the kernel
condition. In turn, using Theorem~ \ref{Maj-Mb-Su},
one can show that the composition operator $C_{\phi}$
that appeared in \cite[Example~ 4.4]{Ja-1} (in
connection with the study of $2$-hyperexpansive
operators) with constant parameter sequence
$\{a_n\}_{n=-\infty}^{\infty}$ is a non-isometric
Brownian isometry which is not unitarily equivalent to
a Brownian shift.
   \section{\label{Sec8}$2$-isometric weighted shifts
on directed trees} Here we focus our attention on
$2$-isometric weighted shifts on directed trees. We
refer the reader to \cite[Chapters 2 and 3]{JJS} for
all definitions pertaining to directed trees and
weighted shifts on directed trees.

Let $\tcal = (V,E)$ be a directed tree (if not stated
otherwise, $V$ and $E$ stand for the sets of vertices
and edges of $\tcal$ respectively). If $\tcal$ has a
root, we denote it by $\rot$.
 We write $V^{\circ}=V\setminus \{\rot\}$ if $\tcal$
is rooted and $V^{\circ}=V$ otherwise. Given
$W\subseteq V$ and $n\in \zbb_+,$ we set
$\childn{n}{W}=W$ if $n=0$ and
$\childn{n}{W}=\child{\childn{n-1}{W}}$ if $n\Ge 1$,
where $\child{W} = \bigcup_{u\in W} \{v\in V \colon
(u,v) \in E\}$. We also set $\des{W} =
\bigcup_{n=0}^{\infty} \childn{n}{W}$. For brevity, we
write $\child{v}=\child{\{v\}}$,
$\childn{n}{v}=\childn{n}{\{v\}}$ and
$\des{v}=\des{\{v\}}$ whenever $v\in V$ and $n\in
\zbb_+$. A member of $\child{v}$ (resp., $\des{v}$) is
called a {\em child} (resp., a {\em descendant}) of
$v$. For $v\in V^{\circ}$, a unique $u\in V$ such that
$(u,v)\in E$ is called the {\em parent} of $v$ and
denoted by $\parent{v}$. We put $V^{\prime} = \{u \in
V\colon \child{u} \neq \emptyset\}$. If
$V=V^{\prime}$, we say that $\tcal$ is {\em leafless}.
By the {\em degree} of a vertex $v\in V$, in notation
$\deg{v}$, we understand the cardinality of
$\child{v}$. A directed tree whose each vertex is of
finite degree is called {\em locally finite}. If
$\tcal$ is rooted, then (see \cite[Corollary~
2.1.5]{JJS})
   \begin{align} \label{ind1}
V = \des{\rot} = \bigsqcup_{n=0}^\infty \childn{n}
{\rot} \quad \text{(the disjoint sum)}.
   \end{align}

   Below we give examples of directed trees playing an
essential role in this~ paper.
   \begin{example} \label{trzykrow}
(a) We begin with two classical directed trees, namely
   \begin{align*}
\text{$(\zbb_+,\{(n,n+1)\colon n \in \zbb_+\})$ and
$(\zbb,\{(n,n+1)\colon n \in \zbb\}),$}
   \end{align*}
which will be denoted simply by $\zbb_+$ and $\zbb,$
respectively. The directed tree $\zbb_+$ is rooted,
$\zbb$ is rootless and both are leafless.

(b) Following \cite[page 67]{JJS}, we define the
directed tree $\tcal_{\eta,\kappa} = (V_{\eta,\kappa},
E_{\eta,\kappa})$ by \allowdisplaybreaks
   \begin{align*}
   \begin{aligned} V_{\eta,\kappa} & = \big\{-k\colon k\in
J_\kappa\big\} \cup \{0\} \cup \big\{(i,j)\colon i\in
J_\eta,\, j \in J_{\infty}\big\},
   \\
E_{\eta,\kappa} & = E_\kappa \cup
\big\{(0,(i,1))\colon i \in J_\eta\big\} \cup
\big\{((i,j),(i,j+1))\colon i\in J_\eta,\, j\in
J_{\infty}\big\},
   \\
E_\kappa & = \big\{(-k,-k+1) \colon k\in
J_\kappa\big\},
   \end{aligned}
   \end{align*}
where $\eta \in \{2,3,4,\ldots\} \cup \{\infty\}$,
$\kappa \in \zbb_+ \cup \{\infty\}$ and $J_\iota = \{k
\in \zbb\colon 1 \Le k\Le \iota\}$ for $\iota \in
\zbb_+ \sqcup \{\infty\}$. The directed tree
$\tcal_{\eta,\kappa}$ is leafless and $0$ is the only
vertex of $\tcal_{\eta,\kappa}$ of degree greater than
$1$. It is rooted if and only if $\kappa < \infty$,

(c) Let $l\in \{2,3,4, \ldots\}.$ We say that a
directed tree $\tcal$ is a {\em quasi-Brownian
directed tree of valency} $l$ (or simply a {\em
quasi-Brownian directed tree}) if
   \begin{align} \notag
&\bullet\text{there exists $u_0\in V$ such that
$\deg{u_0}=l,$}
   \\  \label{(B2)}
&\bullet\text{each vertex $u\in V$ is of degree $1$ or
$l,$}
   \\ \label{(B3)}
&\bullet\text{if $u\in V$ is such that $\deg{u}=1$ and
$v\in \child{u},$ then $\deg{v}=1,$}
   \\  \label{(B4)}
&\bullet\left\{\begin{minipage}{70ex} for every $u\in
V$ with $\deg{u}=l,$ there is exactly one $v\in
\child{u}$ such that $\deg{v}=l$ and the remaining
$l-1$ vertices in $\child{u}$ are of degree~ $1.$
   \end{minipage}\right.
   \end{align}
A quasi-Brownian directed tree of valency $l \Ge 3$
can be defined as follows:
   \begin{align*}
V & = X \times V_{l-1,0},
   \\
E &= \Big\{\big((n,0), (n+1,0)\big)\colon n\in X\Big\}
   \\
   &\hspace{10ex} \sqcup \bigsqcup_{n\in X}
\Big\{\big((n,u),(n,v)\big)\colon u,v \in V_{l-1,0},
\, (u,v) \in E_{l-1,0}\Big\},
   \end{align*}
where $X=\zbb_+$ in the rooted case and $X=\zbb$ in
the rootless case. Geometrically, it is obtained by
``gluing'' to each $n\in X$ a copy of the directed
tree $\tcal_{l-1,0}$ defined in (b). A similar
construction can be performed for $l=2$. Using
\cite[Proposition~ 2.1.4]{JJS} and \cite[Proposition~
2.2.1]{BJJS}, one can verify that there are only two
(up to graph isomorphism) quasi-Brownian directed
trees of valency $l,$ one with root, the other
without.
   \hfill $\diamondsuit$
   \end{example}
Applying induction on $n$, we see that if $\tcal$ is a
quasi-Brownian directed tree of valency $l$, then for
every $u\in V$ with $\deg{u}=l$ and for every $n\in
\zbb_+,$
   \begin{enumerate}
   \item[$\bullet$]
there is exactly one vertex $v\in \childn{n}{u}$ such
that $\deg{v}=l$ and the remaining vertices in
$\childn{n}{u}$ are of degree $1,$
   \item[$\bullet$] $\childn{n}{u}$ consists of $1 + n(l-1)$
vertices.
   \end{enumerate}
Obviously, quasi-Brownian directed trees are locally
finite and leafless.

The following lemma characterizes rooted
quasi-Brownian directed trees.
   \begin{lemma} \label{qbchar}
Let $\tcal$ be a rooted and leafless directed tree
such that $\deg{\omega} \in \{2,3,4, \ldots\}.$ Set
$l=\deg{\omega}$. Then the following conditions are
equivalent{\em :}
   \begin{enumerate}
   \item[(i)] $\tcal$ is a quasi-Brownian directed
tree of valency $l,$
   \item[(ii)] $\tcal$ satisfies \eqref{(B3)} and \eqref{(B4)},
   \item[(iii)] $\tcal$ satisfies \eqref{(B2)} and
the following equation
   \begin{align} \label{Hak3}
\sum_{v\in \child{u}} \deg{v} = 2 \deg{u}-1, \quad u
\in V,
   \end{align}
   \item[(iv)] $\tcal$ satisfies \eqref{Hak3} and
the following condition
   \begin{align} \label{Hak2}
\text{$\deg{u}=\deg{v}$ whenever $u\in V,$ $v\in
\child{u}$ and $\deg{v}\Ge 2.$}
   \end{align}
   \end{enumerate}
   \end{lemma}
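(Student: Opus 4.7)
The plan is to treat the four implications needed to close the cycle, noting that conditions (ii), (iii), (iv) all follow immediately once (i) is established, so the real content is in the converses. Throughout, I would exploit the generational decomposition $V=\bigsqcup_{n=0}^{\infty}\childn{n}{\rot}$ from \eqref{ind1}, together with leaflessness (every vertex has at least one child).

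First, (i)$\Rightarrow$(ii) is trivial. For (i)$\Rightarrow$(iii), if $\deg{u}=1$ then the unique child has degree $1$ by \eqref{(B3)}, giving $\sum_{v\in\child{u}}\deg{v}=1=2\cdot 1-1$; if $\deg{u}=l$ then by \eqref{(B4)} one child contributes $l$ and the other $l-1$ children each contribute $1$, yielding $l+(l-1)=2l-1$. For (i)$\Rightarrow$(iv), \eqref{Hak3} is already in (iii); to verify \eqref{Hak2}, note that if $v\in\child{u}$ has $\deg{v}\geq 2$ then $\deg{v}=l$ by \eqref{(B2)}, and then $\deg{u}\neq 1$ by \eqref{(B3)}, so $\deg{u}=l$ as well.

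For (ii)$\Rightarrow$(i), I would induct on $n$ to show that every $v\in \childn{n}{\rot}$ has degree $1$ or $l$. The base case is $\deg{\rot}=l$, given. For the inductive step, a degree-$1$ vertex propagates degree $1$ to all descendants by \eqref{(B3)}, while a degree-$l$ vertex splits its children into one of degree $l$ and $l-1$ of degree $1$ by \eqref{(B4)}. For (iii)$\Rightarrow$(i), the derivation is purely arithmetic: for $u$ with $\deg{u}=1$, leaflessness and \eqref{(B2)} force the single child to have degree $1$ (it cannot have degree $l\geq 2$ since the sum in \eqref{Hak3} equals $1$), giving \eqref{(B3)}; for $u$ with $\deg{u}=l$, writing $k$ for the number of children of degree $l$ (all others being of degree $1$ by \eqref{(B2)} and leaflessness), \eqref{Hak3} becomes $kl+(l-k)=2l-1$, whence $k(l-1)=l-1$ and $k=1$, giving \eqref{(B4)}.

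The main obstacle, and the step requiring the most care, is (iv)$\Rightarrow$(iii), because here we must deduce the dichotomy \eqref{(B2)} from \eqref{Hak2} and \eqref{Hak3}. I would proceed by induction on the generation $n$ of $v\in\childn{n}{\rot}$. For $n=0$, $\deg{\rot}=l$ by hypothesis. Assume every vertex in $\childn{n}{\rot}$ has degree $1$ or $l$, and take $u$ in that generation. If $\deg{u}=1$, then by leaflessness $u$ has a unique child $w$ with $\deg{w}\geq 1$, and \eqref{Hak3} forces $\deg{w}=1$. If $\deg{u}=l$, then every child $w\in\child{u}$ either has $\deg{w}=1$ (the only possibility when $\deg{w}\leq 1$, by leaflessness) or has $\deg{w}\geq 2$, in which case \eqref{Hak2} yields $\deg{w}=\deg{u}=l$; the same counting as in (iii)$\Rightarrow$(i) applied to \eqref{Hak3} then shows exactly one child has degree $l$. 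This completes the induction, establishing \eqref{(B2)} and hence (iii). Concatenating these, all four conditions are equivalent.
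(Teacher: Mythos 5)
Your proof is correct and follows essentially the same route as the paper: the generational decomposition \eqref{ind1} drives the inductions, and the arithmetic count $kl+(l-k)=2l-1$ handles \eqref{Hak3}. The only cosmetic difference is in (iv)$\Rightarrow$(iii), where the paper walks up the parent chain from a vertex of degree $\Ge 2$ to the root applying \eqref{Hak2} at each step, whereas you induct downward over generations; both hinge on the same propagation of degrees along edges.
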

   \begin{proof}
The implications (i)$\Rightarrow$(ii),
(iii)$\Rightarrow$(i) and (iii)$\Rightarrow$(iv) are
easily seen to be true. The implication
(ii)$\Rightarrow$(iii) follows from \eqref{ind1} by
induction.

   (iv)$\Rightarrow$(iii) Suppose $v \in V^{\circ}$ is
such that $\deg{v}\Ge 2.$ By \cite[Proposition~
2.2.1]{BJJS}, there exists $n\in \nbb$ such that
$\mathsf{par}^n(v)=\omega.$ It follows from
\eqref{Hak2} that $\deg{\parent{v}}=\deg{v}.$ By
induction, $\deg{v}=\deg{\omega}=l,$ which shows that
$\tcal$ satisfies \eqref{(B2)}.
   \end{proof}
Let $\tcal=(V,E)$ be a directed tree. In what follows
$\ell^2(V)$ stands for the Hilbert space of square
summable complex functions on $V$ equipped with the
standard inner product. If $W$ is a nonempty subset of
$V,$ then we regard the Hilbert space $\ell^2(W)$ as a
closed vector subspace of $\ell^2(V)$ by identifying
each $f\in \ell^2(W)$ with the function $\widetilde f
\in \ell^2(V)$ which extends $f$ and vanishes on the
set $V \setminus W$. Note that the set $\{e_u\}_{u\in
V}$, where $e_u\in \ell^2(V)$ is the characteristic
function of $\{u\}$, is an orthonormal basis of
$\ell^2(V)$. Given a system $\lambdab =
\{\lambda_v\}_{v\in V^{\circ}} \subseteq \C$, we
define the operator $\slam$ in $\ell^2(V)$, called a
{\em weighted shift on} $\tcal$ with weights
$\lambdab$ (or simply a weighted shift on $\tcal$), as
follows
   \begin{align*}
   \begin{aligned}
\mathscr D(\slam) & = \{f \in \ell^2(V) \colon
\varLambda_\tcal f \in \ell^2(V)\},
   \\
\slam f & = \varLambda_\tcal f, \quad f \in \mathscr
D(\slam),
   \end{aligned}
   \end{align*}
where $\mathscr D(\slam)$ stands for the {\em domain}
of $\slam$ and $\varLambda_\tcal\colon \C^{V} \to
\C^{V}$ is defined by
   \begin{align*}
(\varLambda_\tcal f) (v) =
   \begin{cases}
\lambda_v \cdot f\big(\parent v\big) & \text{if } v\in
V^\circ,
   \\
   0 & \text{if } v \text{ is a root of } \tcal,
   \end{cases}
   \qquad f \in \C^{V}.
   \end{align*}

Now we collect some properties of weighted shifts on
directed trees that are needed in this paper. We also
show that weighted shifts on rooted directed trees are
completely non-unitary. This is no longer true even
for isometric weighted shifts on rootless directed
trees.

From now on, we adopt the convention that $\sum_{v\in
\emptyset} x_v = 0$. Recall also that $\triangle_T
=T^*T-I$ whenever $T\in \B(\hh)$ (see \eqref{qbdef}).
   \begin{lemma} \label{basicws}
Let $\slam$ be a weighted shift on $\tcal$ with
weights $\lambdab=\{\lambda_v\}_{v\in V^{\circ}}$.
Then
   \begin{enumerate}
   \item[(i)] $e_u$ is in $\mathcal D(\slam)$ if and
only if $\sum_{v \in \child{u}}|\lambda_v|^2 <
\infty;$ if $e_u \in \mathscr D(\slam)$, then $\slam
e_u = \sum_{v \in \child{u}}\lambda_v e_v$ and
$\|\slam e_u\|^2 = \sum_{v \in
\child{u}}|\lambda_v|^2,$
   \item[(ii)]
$\slam \in \B(\ell^2(V))$ if and only if $\sup_{u\in
V} \sum_{v \in \child{u}}|\lambda_v|^2 < \infty;$ if
this is the case, then $\|\slam\|^2=\sup_{u\in V}
\|\slam e_u\|^2 = \sup_{u\in V} \sum_{v \in
\child{u}}|\lambda_v|^2.$
   \end{enumerate}
Moreover, if $\slam \in \B(\ell^2(V))$, then
   \begin{enumerate}
   \item[(iii)]
$\slam^* e_u = \bar{\lambda}_ue_{\parent{u}}$ if $u\in
V^{\circ}$ and $S^*_{\lambda}e_u=0$ otherwise,
   \item[(iv)] $\ker{\slam^*} =
   \begin{cases}
\langle e_{\rot} \rangle \oplus \bigoplus_{u \in
V^\prime} \big(\ell^2(\child{u}) \ominus \langle
\lambdab^u \rangle\big) & \text{if $\tcal$ is rooted,}
   \\[.5ex]
\bigoplus_{u \in V^\prime} \big(\ell^2(\child{u})
\ominus \langle \lambdab^u \rangle\big) &
\text{otherwise,}
   \end{cases}$
\\[1ex]
where $\lambdab^u
\in \ell^2(\child{u})$ is given by $\lambdab^u\colon
\child{u} \ni v \to \lambda_v \in \C$,
   \item[(v)]
$|\slam| e_u = \|\slam e_u\|e_u$ for all $u\in V,$
   \item[(vi)] $\triangle_{\slam} (e_u) = (\|\slam e_u\|^2 - 1)e_u$
for every $u\in V,$
   \item[(vii)]
   $
\triangle_{\slam^*} (e_u) =
   \begin{cases} \big(\sum_{v\in \child{\parent{u}}}
\lambda_v \bar{\lambda}_u e_v\big) - e_u & \text{if }
u \in V^{\circ},
   \\    - e_u & \text{if $\tcal$ is rooted
and } u=\omega,
   \end{cases}
   $
   \item[(viii)]
$\slam$ is analytic $($and thus completely
non-unitary$)$ if $\tcal$ is rooted.
   \end{enumerate}
   \end{lemma}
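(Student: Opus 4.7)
The plan is to derive parts (i)--(vii) by direct computations from the action of $\slam$ on the canonical orthonormal basis $\{e_u\}_{u\in V}$, and to prove (viii) by a short induction that uses the finite-depth structure of a rooted tree.

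For (i) and (ii), I would unpack the definition of $\varLambda_\tcal$: one sees that $(\varLambda_\tcal e_u)(v)$ equals $\lambda_v$ for $v \in \child{u}$ and vanishes otherwise, which yields (i) at once. For (ii), decomposing $f = \sum_{u\in V} f(u) e_u$ and using the disjoint decomposition $V^{\circ} = \bigsqcup_{u\in V^\prime}\child{u}$ (a defining property of the edge structure of a directed tree) gives
\[
\|\slam f\|^2 \;=\; \sum_{u\in V^\prime} |f(u)|^2\, \|\slam e_u\|^2,
\]
from which both the boundedness criterion and the norm formula follow.

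For (iii) I would take adjoints of the identity in (i): a one-line inner-product computation yields $\langle e_w, \slam e_u\rangle = \bar\lambda_w\,[w\in\child{u}]$, hence $\slam^* e_w = \bar\lambda_w\, e_{\parent{w}}$ for $w\in V^\circ$ and $\slam^* e_\rot = 0$ when $\tcal$ is rooted. Part (iv) then follows by extending (iii) linearly: $\slam^* f = \sum_{u\in V^\prime}\langle f|_{\child{u}},\lambdab^u\rangle_{\ell^2(\child{u})}\, e_u$, so $\slam^* f = 0$ is equivalent to $f|_{\child{u}}\perp \lambdab^u$ in $\ell^2(\child{u})$ for every $u\in V^\prime$, together with $f(\rot)$ being arbitrary when $\tcal$ is rooted; these conditions match the stated orthogonal decomposition. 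For (v), composing (i) and (iii) yields $\slam^*\slam e_u = \|\slam e_u\|^2 e_u$, so $\slam^*\slam$ is diagonal with non-negative eigenvalues in $\{e_u\}$, and hence its unique positive square root $|\slam|$ is diagonal with eigenvalues $\|\slam e_u\|$. Parts (vi) and (vii) are immediate, the former from (v) and the latter from applying (iii) twice to compute $\slam\slam^* e_u$ in the cases $u\in V^\circ$ and $u=\rot$ separately.

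For (viii), assuming $\tcal$ is rooted, I would prove by induction on $n$ that $(\slam^n f)(v) = 0$ whenever $f \in \ell^2(V)$ and $d(v) < n$, where $d(v):=\min\{k\in\zbb_+\colon \mathsf{par}^k(v)=\rot\}$ is the depth of $v$, finite for every $v\in V$ by \eqref{ind1}. The inductive step is one line once one observes that $(\slam g)(\rot)=0$ and $(\slam g)(v)=\lambda_v\, g(\parent{v})$ for $v\in V^\circ$, combined with $d(\parent{v})=d(v)-1$. Consequently any $g\in\bigcap_{n\ge 1}\slam^n(\ell^2(V))$ vanishes pointwise on $V$, so $\slam$ is analytic, and therefore completely non-unitary. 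The principal obstacle is purely bookkeeping, namely carefully separating the rooted and rootless cases in (iv) and the cases $u=\rot$ versus $u\in V^\circ$ in (iii), (iv) and (vii); no single step poses a substantive difficulty.
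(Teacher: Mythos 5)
Your proposal is correct and takes essentially the same route as the paper: the paper simply cites \cite{JJS} for parts (i)--(v) (whose proofs are exactly the direct basis computations you write out), obtains (vi) from (v) and (vii) from (i) and (iii) just as you do, and proves (viii) by the same observation that $\slam^n(\ell^2(V))$ is supported on vertices of generation at least $n$, so the intersection of the ranges is trivial. Your depth function $d(v)$ is just a pointwise reformulation of the paper's sets $\varOmega_n$, and all of your computations (including the decomposition $V^{\circ}=\bigsqcup_{u\in V^{\prime}}\child{u}$ underlying (ii) and (iv)) check out.
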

   \begin{proof}
The assertions (i)-(v) follow from \cite[Propositions~
3.1.3, 3.1.8, 3.4.1, 3.4.3 and 3.5.1]{JJS}. The
assertion (vi) can be deduced from (v), while the
assertion (vii) can be inferred from (i) and (iii). To
prove the assertion (viii), assume that $\tcal$ is
rooted. It follows from \cite[Corollary~ 2.1.5 and
Lemma 6.1.1]{JJS} that
   \begin{align*}
\slam^{n}(\ell^2(V)) \subseteq \chi_{\varOmega_n}
\cdot \ell^2(V), \quad n\in \zbb_+,
   \end{align*}
where $\varOmega_n=\bigsqcup_{j=n}^{\infty}
\childn{n}{\rot}$ for $n\in \zbb_+$. This implies that
$\bigcap_{n=0}^{\infty} \slam^{n}(\ell^2(V)) = \{0\}$,
which means that $\slam$ is analytic and so completely
non-unitary.
   \end{proof}
For a weighted shift $\slam\in \B(\ell^2(V))$ on
$\tcal$, we define $d_{\slam}\colon V \times \zbb_+
\to \rbb_+$ by
   \begin{align} \label{defdun}
d_{\slam}(u,n) = \|\slam^n e_u\|^2, \quad u \in V,\, n
\in \zbb_+.
   \end{align}
We show that $\slam^{*n}\slam^{n}$ is a diagonal
operator with respect to the orthonormal basis
$\{e_u\}_{u\in V}$ with diagonal elements
$\{d_{\slam}(u,n)\}_{u\in V}$,
   \begin{lemma} \label{powers}
Let $\slam \in \B(\ell^2(V))$ be a weighted shift on
$\tcal.$ Then
   \begin{align} \label{dun}
\slam^{*n} \slam^{n} e_u = d_{\slam}(u,n) e_u, \quad u
\in V, \, n\in \zbb_+.
   \end{align}
The function $d_{\slam}$ satisfies the following
recurrence relation{\em :}
   \begin{align}   \label{snssn1}
d_{\slam}(u,0) & =1, \quad u\in V,
   \\  \label{snssn2}
d_{\slam}(u,n+1) & = \sum_{v\in \child{u}}
|\lambda_v|^2 d_{\slam}(v,n), \quad u \in V, \, n \in
\zbb_+.
   \end{align}
   \end{lemma}
   \begin{proof}
We will use induction on $n$. The case of $n=0$ is
obvious. Assume that \eqref{dun} holds for a fixed
$n\in \zbb_+$. Then, by Lemma~ \ref{basicws}, we have
   \allowdisplaybreaks
   \begin{align}  \notag
\slam^{*(n+1)}\slam^{(n+1)}e_{u} & = \slam^{*} \sum_{v
\in \child{u}} \lambda_v \slam^{*n} \slam ^{n} e_{v}
   \\ \notag
&\hspace{-.3ex}\overset{(*)}= \sum_{v \in \child{u}}
\lambda_v d_{\slam}(v,n) \slam^{*} e_{v}
   \\ \notag
& = \sum_{v \in \child{u}} |\lambda_v|^2
d_{\slam}(v,n) e_{u}, \quad u \in V,
   \end{align}
where ($*$) is due to the induction hypothesis. This
completes the proof.
   \end{proof}
Given a weighted shift $\slam \in \B(\ell^2(V))$ on
$\tcal$, we~ set
   \begin{align*}
\{\lambdab\neq 0\}=\{v \in V^{\circ} \colon \lambda_v
\neq 0\} \quad \text{and} \quad V_{\lambdab}^+ =
\{u\in V\colon \|\slam e_u\| > 0\}.
   \end{align*}
It follows from Lemma~ \ref{basicws}(i) that
   \begin{align}  \label{parv}
V_{\lambdab}^+ = \big\{u\in V \colon \child{u} \cap
\{\lambdab\neq 0\} \neq \emptyset\big\} = \{u\in
V'\colon \lambdab^u \neq 0\} = \parent{\{\lambdab\neq
0\}}.
   \end{align}
Note that if $V_{\lambdab}^+=V$, then $\tcal$ is
leafless (but not conversely), and if $\tcal$ is
leafless and $\{\lambdab\neq 0\}=V^{\circ}$, then
$V_{\lambdab}^+=V$ (but not conversely).

Now we show that the operation of taking Cauchy dual
is an inner operation in the class of weighted shifts
on directed trees.
   \begin{lemma}\label{cisws}
Let $\slam \in \B(\ell^2(V))$ be a left-invertible
weighted shift on $\tcal$ with weights
$\lambdab=\{\lambda_v\}_{v\in V^{\circ}}$. Then
$V_{\lambdab}^+ = V$ and the Cauchy dual $\slam'$ of
$\slam$ is a weighted shift on $\tcal$ with weights
$\big\{\lambda_v\|\slam
e_{\parent{v}}\|^{-2}\big\}_{v\in V^{\circ}}$.
   \end{lemma}
   \begin{proof}
In view of \cite[Proposition~ 3.4.3(iv)]{JJS},
$(\slam^*\slam f)(u) = \|\slam e_u\|^2 f(u)$ for all
$u \in V$ and $f \in \ell^2(V)$. Since, by the
left-invertibility of $\slam$, $\slam^*\slam$ is
invertible in $\B(\ell^2(V))$, we deduce that
$V_{\lambdab}^+ = V$. Clearly, $((\slam^*\slam)^{-1}
f)(u) = \|\slam e_u\|^{-2} f(u)$ for all $u \in V$ and
$f \in \ell^2(V)$. This and the definition of $\slam$
complete the proof.
   \end{proof}
The question of when a weighted shift on a directed
tree satisfies the kernel condition has the following
explicit answer.
   \begin{lemma} \label{kcfws}
Let $\slam \in \B(\ell^2(V))$ be a weighted shift on
$\tcal$. Then the following conditions are
equivalent{\em :}
   \begin{enumerate}
   \item[(i)] $\slam^*\slam(\ker{\slam^*}) \subseteq
\ker{\slam^*}$,
   \item[(ii)] there exists a family $\{\alpha_v\}_{v\in V_{\lambdab}^+}
\subseteq \rbb_+$ such that
   \begin{align*}
\|\slam e_u\|=\alpha_{\parent{u}}, \quad u \in
\{\lambdab\neq 0\}.
   \end{align*}
   \end{enumerate}
Moreover, if $\tcal$ is leafless and $\slam$ has
nonzero weights, then {\em (i)} is equivalent to
   \begin{enumerate}
   \item[(iii)] there exists a family $\{\alpha_v\}_{v\in V}
\subseteq \rbb_+$ such that
   \begin{align}  \label{hypo+}
\|\slam e_u\|=\alpha_{\parent{u}}, \quad u \in
V^{\circ}.
   \end{align}
   \end{enumerate}
   \end{lemma}
   \begin{proof}
Given $v\in V$, we denote by $M_v$ the operator in
$\ell^2(\child{v})$ of multiplication by the function
$\child{v} \ni u\mapsto \|\slam {e_u}\|^2 \in \rbb_+.$
It follows from Lemma~ \ref{basicws}(ii) that $M_v \in
\B(\ell^2(\child{v}))$. Using \cite[Proposition~
2.1.2]{JJS} and Lemma~ \ref{basicws}(v) we get
   \begin{align*}
\slam^*\slam =
   \begin{cases}
\bigoplus_{v\in V'} M_v & \text{if $\tcal$ is
rootless,}
   \\[1ex]
\|\slam e_{\rot}\|^2 \cdot I_{\langle e_{\rot}
\rangle} \oplus \bigoplus_{v\in V'} M_v & \text{if
$\tcal$ is rooted.}
   \end{cases}
   \end{align*}
Hence, by Lemma~ \ref{basicws}(iv), the condition (i)
holds if and only if
   \begin{align} \label{num-r}
M_{v}\big(\ell^2(\child{v}) \ominus \langle
\lambdab^{v}\rangle\big) \subseteq \ell^2(\child{v})
\ominus \langle \lambdab^{v}\rangle, \quad v\in
V^{\prime}.
   \end{align}
Since $M_v=M_v^*$, \eqref{num-r} holds if and only if
$M_v(\langle \lambdab^{v}\rangle) \subseteq \langle
\lambdab^{v}\rangle$ for all $v\in V^{\prime}$, or
equivalently, if and only if $M_v(\langle
\lambdab^{v}\rangle) \subseteq \langle
\lambdab^{v}\rangle$ for all $v\in V_{\lambdab}^+$
(see \eqref{parv}), and the latter is equivalent to
(ii).

The ``moreover'' part is obvious due to \eqref{parv}
and the equivalence (i)$\Leftrightarrow$(ii).
   \end{proof}
   $2$-isometric weighted shifts on directed trees can
be characterized as follows.
   \begin{lemma} \label{2-is-sl}
A weighted shift $\slam \in \B(\ell^2(V))$ on $\tcal$
is a $2$-isometry if and only if either of the
following two equivalent conditions holds{\em :}
   \begin{gather} \label{char2iso}
1- 2 \|\slam e_u\|^2 + \sum_{v \in
\child{u}}|\lambda_v|^2 \|\slam e_v\|^2=0, \quad u \in
V,
   \\  \label{char2iso2}
\sum_{v\in \child{u}} |\lambda_v|^2 (2-\|\slam
e_v\|^2)=1, \quad u \in V.
   \end{gather}
If $\slam$ is a $2$-isometry, then $\|\slam e_u\| \Ge
1$ for all $u\in V,$ $V_{\lambdab}^+ = V$ and $\tcal$
is leafless.
   \end{lemma}
   \begin{proof}
Using \eqref{dun}, \eqref{snssn1} and \eqref{snssn2}
(see Lemma~ \ref{powers}), we deduce that $\slam$ is
$2$-isometric if and only if \eqref{char2iso} holds.
By Lemma~ \ref{basicws}(i), the conditions
\eqref{char2iso} and \eqref{char2iso2} are equivalent
(note that all series appearing in \eqref{char2iso}
and \eqref{char2iso2} are convergent).

The ``moreover part'' follows from \cite[Lemma 1]{R-0}
and Lemma~ \ref{basicws}(viii).
   \end{proof}
   \begin{remark} \label{codla2}
Let $\slam \in \B(\ell^2(V))$ be a $2$-isometric
weighted shift on a directed tree $\tcal$ with nonzero
weights $\lambdab=\{\lambda_v\}_{v \in V^{\circ}}$.
Since $\{\lambdab\neq 0\}=V^{\circ}$ and, by Lemma~
\ref{2-is-sl}, $\tcal$ is leafless, we infer from
Lemma~ \ref{kcfws} that $\slam$ satisfies the kernel
condition if and only if \eqref{hypo+} holds for some
$\{\alpha_v\}_{v\in V} \subseteq \rbb_+$.
   \hfill $\diamondsuit$
   \end{remark}
   Now we characterize $2$-isometric weighted shifts
on rooted directed trees which satisfy the condition
\eqref{hypo+}.
   \begin{lemma}\label{alpha2}
Suppose $\slam\in \B(\ell^2(V))$ is a weighted shift
on a rooted directed tree $\mathscr T$ which satisfies
the condition \eqref{hypo+} for some
$\{\alpha_v\}_{v\in V} \subseteq \rbb_+.$ Then the
following conditions are equivalent{\em :}
   \begin{enumerate}
   \item[(i)] $\slam$ is a $2$-isometry,
   \item[(ii)] $1 - 2 \|\slam e_{\rot}\|^2 +
\alpha_{\rot}^2 \|\slam e_{\rot}\|^2 =0$ and $1- 2
\alpha^2_{\parent{u}} + \alpha^2_u
\alpha^2_{\parent{u}} = 0$ for every $u \in
V^{\circ}.$
   \end{enumerate}
Moreover, if $\slam$ is a $2$-isometry, then $($see
\eqref{xin}$)$
   \begin{enumerate}
   \item[(iii)] $\|\slam e_{\rot}\| \Ge 1$ and\/\footnote{\;This
implies that $\alpha_u \in [1,\sqrt{2}\,)$ for all
$u\in V.$} $\alpha_u = \xi_{n+1}(\|\slam e_{\rot}\|)$
for all $u \in \childn{n}{\rot}$ and $n\in \zbb_+$,
   \item[(iv)] $\slam$ is an isometry if and only if
$\|\slam e_v\|=1$ for some $v\in V.$
   \end{enumerate}
   \end{lemma}
   \begin{proof}
The equivalence (i)$\Leftrightarrow$(ii) is a direct
consequence of \eqref{hypo+} and Lemmata~
\ref{basicws}(i) and \ref{2-is-sl}.

To prove the ``moreover'' part, assume that $\slam$ is
a $2$-isometry.

(iii) By \cite[Lemma 1]{R-0}, $\|\slam e_{\rot}\| \Ge
1$. We will use induction to prove that
   \begin{align} \label{indn0}
\alpha_u = \xi_{n+1}(\|\slam e_{\rot}\|), \quad u \in
\childn{n}{\rot},
   \end{align}
for every $n\in \zbb_+$. The case of $n=0$ follows
from the first equality in (ii). Assume that
\eqref{indn0} holds for a fixed $n\in \zbb_+$. Take $u
\in \childn{n+1}{\rot}$. Then, by \eqref{ind1},
$\parent{u} \in \childn{n}{\rot}$. It follows from the
induction hypothesis that
   \begin{align} \label{ind2}
\alpha_{\parent{u}} = \xi_{n+1}(\|\slam e_{\rot}\|)
\Ge 1.
   \end{align}
Using the second equation in (ii) and Lemma~
\ref{xin11}(ii), we get
   \begin{align*}
\alpha_u = \xi_1(\alpha_{\parent{u}})
\overset{\eqref{ind2}} = \xi_1(\xi_{n+1}(\|\slam
e_{\rot}\|)) = \xi_{n+2}(\|\slam e_{\rot}\|),
   \end{align*}
which completes the induction argument. Hence (iii)
holds.

(iv) Only the ``if'' part needs proof. Note that by
Lemma~ \ref{2-is-sl}, $\tcal$ is leafless. If $\|\slam
e_{\rot}\| = 1$, then by (iii) we have
   \begin{align*}
1=\xi_1(\|\slam e_{\rot}\|)=\alpha_{\rot}
\overset{\eqref{hypo+}} = \|\slam e_{\tilde v}\|,
\quad \tilde v \in \child{\rot}.
   \end{align*}
Hence, without loss of generality we can assume that
$\|\slam e_v\| = 1$ for some $v\in V^{\circ}$. Set
$u=\parent{v}$. By \eqref{ind1}, there exists $n\in
\zbb_+$ such that $v \in \childn{n+1}{\rot}$ and thus
$u \in \childn{n}{\rot}$. Then
   \begin{align*}
\xi_{n+1}(\|\slam e_{\rot}\|)
\overset{\mathrm{(iii)}}= \alpha_{u}
\overset{\eqref{hypo+}}= \|\slam e_v\| = 1,
   \end{align*}
which implies that $\|\slam e_{\rot}\|=1.$ By
\eqref{ind1} and (iii), $\alpha_w=1$ for all $w \in
V.$ Hence, in view of \eqref{hypo+}, $\|\slam e_w\|=1$
for all $w \in V.$ This combined with Lemma~
\ref{basicws}(vi) shows that $\slam$ is an isometry.
This completes the proof.
   \end{proof}
   \begin{proposition} \label{2iso-kc}
If $\slam\in \B(\ell^2(V))$ is a weighted shift on a
rooted directed tree $\tcal$, then the following
conditions are equivalent{\em :}
   \begin{enumerate}
   \item[(i)] $\slam$ is a $2$-isometry  satisfying
the condition \eqref{hypo+} for some
$\{\alpha_v\}_{v\in V} \subseteq \rbb_+,$
   \item[(ii)] $\|\slam e_{\rot}\| \Ge 1$ and
$\|\slam e_v\| = \xi_n(\|\slam e_{\rot}\|)$ for all $v
\in \childn{n}{\rot}$ and $n\in \zbb_+$.
   \end{enumerate}
   \end{proposition}
   \begin{proof}
The implication (i)$\Rightarrow$(ii) follows from
Lemma~ \ref{alpha2}(iii), \eqref{hypo+} and
\eqref{ind1}. To prove the reverse implication, define
$\{\alpha_v\}_{v\in V} \subseteq \rbb_+$ by $\alpha_u
= \xi_{n+1}(\|\slam e_{\rot}\|)$ for all $u \in
\childn{n}{\rot}$ and $n\in \zbb_+$, and verify, using
\eqref{ind1}, that the conditions \eqref{hypo+} and
(ii) of Lemma~ \ref{alpha2} are satisfied. Hence, by
this proposition, (i) holds.
   \end{proof}
Below, we will show that $2$-isometric weighted shifts
on rootless directed trees satisfying \eqref{hypo+}
must be isometric (clearly, each isometric weighted
shift on a directed tree satisfies \eqref{hypo+}).
This is somehow related to \cite[Theorem~
7.2.1(iii)]{JJS}.
   \begin{proposition} \label{bilat}
Let $\slam\in \B(\ell^2(V))$ be a $2$-isometric
weighted shift on a rootless directed tree $\tcal$,
which satisfies the condition \eqref{hypo+} for some
$\{\alpha_v\}_{v\in V} \subseteq \rbb_+.$ Then $\slam$
is an isometry.
   \end{proposition}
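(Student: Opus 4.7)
My plan is to reduce Proposition \ref{bilat} to a one-dimensional dynamical argument along any ``parent chain'' in the rootless tree. First, I would combine the $2$-isometry condition \eqref{char2iso2} with the hypothesis \eqref{hypo+} and the identity $\sum_{v\in\child u}|\lambda_v|^2=\|\slam e_u\|^2=\alpha_{\parent u}^2$, which is valid for every $u\in V$ because $\tcal$ is rootless (so $V^{\circ}=V$). Since for $v\in\child u$ one has $\|\slam e_v\|^2=\alpha_{\parent v}^2=\alpha_u^2$, a direct computation then yields the scalar relation
\[
\alpha_{\parent u}^2\,(2-\alpha_u^2)=1,\qquad u\in V,
\]
equivalently $\alpha_u^2=2-\alpha_{\parent u}^{-2}$. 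In particular $\alpha_u^2<2$, while $\alpha_u\Ge 1$ thanks to $\|\slam e_u\|\Ge 1$ (cf.\ \cite[Lemma 1]{R-0}).

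Next, I would linearize this recurrence by substituting $\gamma_u:=\alpha_u^2-1\in[0,1)$, which transforms the relation above into
\[
\gamma_u=\frac{\gamma_{\parent u}}{1+\gamma_{\parent u}},\qquad u\in V.
\]
In particular, $\gamma_{\parent u}=0$ forces $\gamma_u=0$, so (contrapositively) $\gamma_u>0$ forces $\gamma_{\parent u}>0$; and whenever $\gamma_u>0$ one may take reciprocals to obtain the telescoping identity
\[
\frac{1}{\gamma_u}-\frac{1}{\gamma_{\parent u}}=1.
\]

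Finally, suppose toward a contradiction that $\gamma_{u_0}>0$ for some $u_0\in V$. Since $\tcal$ is rootless, the iterated parent $\parent^n u_0$ is defined for every $n\in\nbb$ and, by the preceding observation, $\gamma_{\parent^n u_0}>0$ for all $n\in\zbb_+$. Telescoping the displayed identity then gives
\[
\frac{1}{\gamma_{\parent^n u_0}}=\frac{1}{\gamma_{u_0}}-n,\qquad n\in\zbb_+,
\]
whose right-hand side is negative for $n$ sufficiently large, contradicting the positivity of $\gamma_{\parent^n u_0}$. Hence $\gamma_u=0$, i.e.\ $\alpha_u=1$, for every $u\in V$. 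By \eqref{hypo+}, $\|\slam e_u\|=\alpha_{\parent u}=1$ for every $u\in V^{\circ}=V$, and Lemma \ref{basicws}(vi) yields $\triangle_{\slam}=0$, so $\slam$ is an isometry. The only step that needs care is extracting the scalar recurrence from \eqref{char2iso2} and \eqref{hypo+} in the rootless setting; once that is in hand, the linearization and telescoping argument crucially exploits the fact that parent chains in a rootless tree are infinite, which is what kills the possibility of a nontrivial $\gamma_{u_0}$.
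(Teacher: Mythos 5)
Your proof is correct. The reduction is the same as the paper's: both arguments turn \eqref{char2iso2} and \eqref{hypo+} into a scalar recurrence along a parent chain and then exploit the fact that, in a rootless tree, that chain extends infinitely far back. The difference is in how the scalar recurrence is killed. The paper takes a bi-infinite chain $\{v_n\}_{n\in\zbb}$ through a fixed vertex, sets $\beta_n=\|\slam e_{v_n}\|$, proves $\beta_{n+1}\Le\beta_n$ from $1-2\beta_n^2+\beta_n^2\beta_{n+1}^2=0$, and passes to the limit at both ends of $\zbb$ to force every $\beta_n$ to equal $1$ — a soft monotonicity-plus-limits argument. You instead linearize: the substitution $\gamma_u=\alpha_u^2-1$ converts $\alpha_u^2=2-\alpha_{\parent u}^{-2}$ into the statement that $1/\gamma_u$ drops by exactly $1$ at each passage to the parent, so a single nonzero $\gamma_{u_0}$ is contradicted after finitely many ancestor steps. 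This is more quantitative (no limits, and only the ancestor half of the chain is used, which is precisely what rootlessness provides), and it is essentially the same M\"obius linearization that underlies the closed form \eqref{xin} of the weights $\xi_n$ elsewhere in the paper. The paper also sketches a second proof via invertibility of a bilateral weighted shift and \cite[Proposition 1.23]{Ag-St}; yours is more elementary than either.
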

   \begin{proof}
In view of Lemma~ \ref{basicws}(vi), it suffices to
show that $\|\slam e_v\|=1$ for every $v\in V$. Fix
$v\in V$. Since $\tcal$ is rootless and leafless (see
Lemma~ \ref{2-is-sl}), an induction argument shows
that there exists a (necessarily injective) sequence
$\{v_n\}_{n = -\infty}^{\infty}\subseteq V$ such that
$v_0=v$ and $v_{n}=\parent{v_{n+1}}$ for all $n\in
\zbb$. Set $\beta_n=\|\slam e_{v_n}\|$ for $n\in
\zbb$. Clearly, $\{\beta_n\}_{n\in\zbb} \subseteq
[0,\|\slam\|]$. According to \eqref{hypo+},
\eqref{char2iso} and Lemma~ \ref{basicws}(i), we have
   \begin{align*}
1- 2 \beta_n^2 + \beta_n^2 \beta_{n+1}^2 = 0, \quad
n\in \zbb.
   \end{align*}
Hence, by Lemma~ \ref{2-is-sl}, the bilateral weighted
shift $W$ in $\ell^2(\zbb)$ with weights
$\{\beta_n\}_{n\in\zbb}$ is a $2$-isometry with dense
range. Since $W$ is left-invertible, we deduce that
$W$ is invertible in $\B(\ell^2(\zbb))$. Therefore, by
\cite[Proposition~ 1.23]{Ag-St} (see also
\cite[Remark~ 3.4]{Sho-Ath}), $W$ is unitary. This
implies that $\beta_n=1$ for all $n\in \zbb$. In
particular, $\|\slam e_v\|=\|\slam
e_{v_0}\|=\beta_0=1$, which completes the proof.
   \end{proof}
   We conclude this section by showing that Brownian
isometric weighted shifts on rooted directed trees are
isometric (cf.\ Proposition~ \ref{stwfa}).
   \begin{proposition}\label{briai}
Let $\slam\in \B(\ell^2(V))$ be a Brownian isometric
weighted shift on a rooted directed tree $\mathscr T$.
Then $\slam$ is an isometry.
   \end{proposition}
   \begin{proof}
We split the proof into a few steps.

{\sc Step} 1. If $u\in V$ is such that either
$u=\omega$ or $u\in V^{\circ}$ and $\lambda_u=0,$ then
$\|\slam e_u\|=1.$

Indeed, it follows from \eqref{defbi} and the
assertions (vi) and (vii) of Lemma~ \ref{basicws} that
   \begin{align*}
0=\triangle_{\slam} \triangle_{\slam^*}
\triangle_{\slam} (e_u) & = (\|\slam
e_u\|^2-1)\triangle_{\slam} \triangle_{\slam^*} (e_u)
   \\
& = -(\|\slam e_u\|^2-1)\triangle_{\slam} (e_u)
   \\
& = -(\|\slam e_u\|^2-1)^2 e_u,
   \end{align*}
which means that $\|\slam e_u\|=1.$

{\sc Step} 2. If $u \in V$ is such that $\|\slam
e_u\|=1,$ then $\|\slam e_v\|=1$ for all $v\in
\child{u}.$

Indeed, by \eqref{char2iso} and Lemma~
\ref{basicws}(i), we see that
   \begin{align*}
\sum_{v \in \child{u}}|\lambda_v|^2 \|\slam e_v\|^2 =
1 = \|\slam e_u\|^2 = \sum_{v \in \child{u}}
|\lambda_v|^2.
   \end{align*}
Hence, we have
   \begin{align} \label{sumeqo}
\sum_{v \in \child{u}}|\lambda_v|^2 (\|\slam e_v\|^2
-1) =0.
   \end{align}
Since by Lemma~ \ref{2-is-sl}, $\|\slam e_v\|^2 - 1
\Ge 0$ for all $v\in V,$ we infer from \eqref{sumeqo}
that $\|\slam e_v\| = 1$ for all $v\in \child{u}$ such
that $\lambda_v\neq 0.$ On the other hand, if
$\lambda_v=0$ for some $v\in \child{u},$ then by Step
1, $\|\slam e_v\|=1,$ which completes the proof of
Step 2.

Finally, induction together with \eqref{ind1} and
Steps 1 and 2 shows that $\|\slam e_u\|=1$ for all
$u\in V,$ which implies that $\slam$ is an isometry
(see Lemma~ \ref{basicws}(vi)).
    \end{proof}
   \section{\label{Sec10}The Cauchy dual subnormality
problem via perturbed kernel condition}
   Remark~ \ref{codla2} suggests considering a wider
class of $2$-isometric weighted shifts on directed
trees which satisfy a less restrictive condition than
\eqref{hypo+}. In this section, we will discuss the
question of subnormality of the Cauchy dual of a
$2$-isometric weighted shift $\slam\in \B(\ell^2(V))$
on a rooted directed tree $\mathscr T$ for which there
exist $k\in \nbb$ and a family $\{\alpha_v\}_{v\in
\des{\childn{k}{\rot}}} \subseteq \rbb_+$ such that
   \begin{align} \label{hypok}
\|\slam e_u\|=\alpha_{\parent{u}}, \quad u \in
\des{\childn{k+1}{\rot}}.
   \end{align}
A complete answer to this question is given in
Theorem~ \ref{main2}. This enables us to solve the
Cauchy dual subnormality problem in the negative (see
Example~ \ref{glowny}; see also Example~ \ref{przadj}
for the case of adjacency operators). For a pictorial
comparison of the conditions \eqref{hypo+} and
\eqref{hypok} in the case of $k=1$, we refer the
reader to Figure~ \ref{Fig1a}. The quantities $\|\slam
e_v\|,$ $v\in \childn{2}{\omega}$, appearing therein
can be calculated by using \eqref{hypo} and
\eqref{2-iso-0}.

We begin by establishing an explicit formula for
$d_{\slam^{\prime}}$ (see Lemma~ \ref{cisws} and
\eqref{defdun}), where $\slam$ is a $2$-isometric
weighted shift on a rooted directed tree which
satisfies the condition \eqref{hypok} for $k=1$.
   \begin{figure}[t]
   \begin{center}
\subfigure {
\includegraphics[scale=0.39]{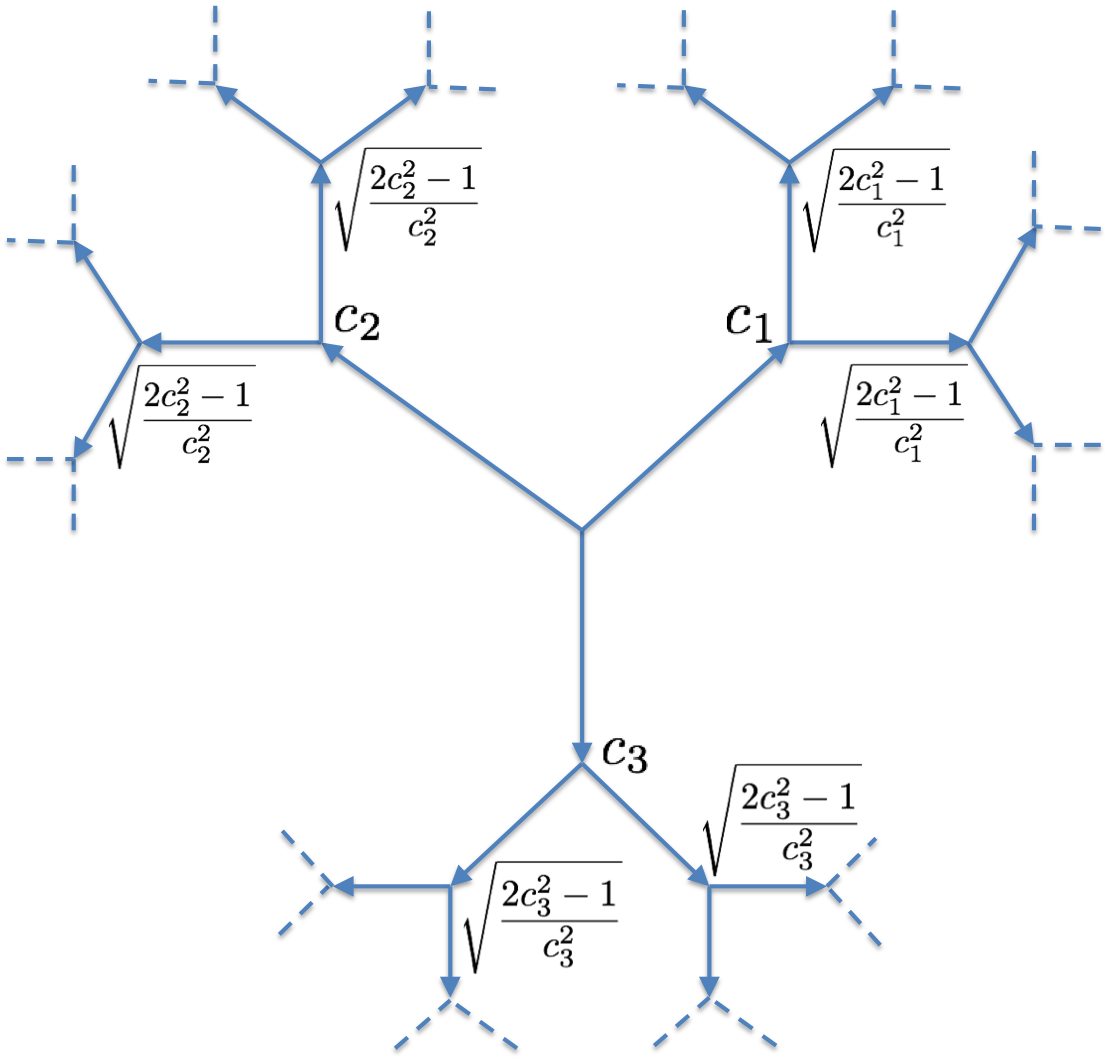}
} \caption{A weighted shift $\slam$ on a rooted
directed tree $\tcal$ which satisfies \eqref{hypok}
for $k=1$; it satisfies \eqref{hypo+} exclusively when
$c_1=c_2=c_3$ ($\Vert \slam e_v \Vert$ is the label of
a vertex $v$).} \label{Fig1a}
   \end{center}
   \end{figure}
   \begin{lemma} \label{d-graph}
Let $\slam\in \B(\ell^2(V))$ be a $2$-isometric
weighted shift on a rooted directed tree $\mathscr T$
such that
   \begin{align}  \label{hypo}
\|\slam e_u\|=\alpha_{\parent{u}}, \quad u \in
\des{\childn{2}{\rot}},
   \end{align}
for some $\{\alpha_v\}_{v\in V^{\circ}} \subseteq
\rbb_+$. Then $\tcal$ is leafless, $\|\slam e_u\| \Ge
1$ for every $u\in V$ and
   \begin{gather} \label{moment-a}
d_{\slam^{\prime}}(\rot,n) = \frac{1}{\|\slam
e_{\rot}\|^4} \sum_{v \in \child{\rot}}
\frac{|\lambda_v|^2}{(n-1)\| \slam e_v\|^2 - (n-2)},
\quad n\in \nbb,
   \\ \label{moment-b}
d_{\slam^{\prime}}(v,n) = \frac{\|\slam
e_v\|^{-2}}{(n-1) \alpha^2_v - (n-2)}, \quad v \in
V^{\circ}, \, n\in \nbb.
   \end{gather}
   \end{lemma}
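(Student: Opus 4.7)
The plan is to first dispense with the two qualitative assertions about $\tcal$ and $\|\slam e_u\|$, and then to establish the moment formulas by extracting a local algebraic identity for $\alpha_v$ and running an induction on $n$. Leaflessness of $\tcal$ and the estimates $\|\slam e_u\| \Ge 1$ are immediate from the ``moreover'' part of Proposition \ref{2-is-sl}. By Proposition \ref{cisws}, $\slam'$ is a weighted shift on $\tcal$ with weights $\lambda_u \|\slam e_{\parent u}\|^{-2}$, and Proposition \ref{powers} applied to $\slam'$ yields the recurrence
\begin{align*}
d_{\slam'}(v, n+1) = \frac{1}{\|\slam e_v\|^4} \sum_{u \in \child v} |\lambda_u|^2\, d_{\slam'}(u, n), \quad v \in V,\, n \in \zbb_+.
\end{align*}

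The key preparatory step is to prove the identity
\begin{align*}
\alpha_v^2 = 2 - \|\slam e_v\|^{-2}, \quad v \in V^\circ.
\end{align*}
Since $V^\circ = \bigcup_{n\Ge 1}\childn{n}{\rot}$, every $v \in V^\circ$ satisfies $\child v \subseteq \des{\childn{2}{\rot}}$, so \eqref{hypo} forces $\|\slam e_u\| = \alpha_v$ for all $u \in \child v$. Substituting this into the $2$-isometry identity \eqref{char2iso} at $v$, together with Lemma \ref{basicws}(i), gives $1 - 2\|\slam e_v\|^2 + \alpha_v^2 \|\slam e_v\|^2 = 0$, and the identity follows.

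I would then prove \eqref{moment-b} by induction on $n \in \nbb$. The case $n=1$ is a direct computation from the recurrence together with $\sum_{u \in \child v} |\lambda_u|^2 = \|\slam e_v\|^2$. For the inductive step at $v \in V^\circ$, observe that for $u \in \child v \subseteq V^\circ$ the condition \eqref{hypo} yields $\|\slam e_u\| = \alpha_v$, and the preparatory identity applied at $u$ gives $\alpha_u^2 = 2 - \alpha_v^{-2}$. Plugging the inductive formula for $d_{\slam'}(u,n)$ into the recurrence and simplifying $(n-1)\alpha_u^2 - (n-2) = n - (n-1)\alpha_v^{-2}$, the sum over $\child v$ collapses via the same identity $\sum_{u \in \child v} |\lambda_u|^2 = \|\slam e_v\|^2$, delivering \eqref{moment-b} at $n+1$. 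For $v = \rot$ the same recurrence reads
\begin{align*}
d_{\slam'}(\rot, n) = \frac{1}{\|\slam e_\rot\|^4} \sum_{v \in \child \rot} |\lambda_v|^2\, d_{\slam'}(v, n-1), \quad n \in \nbb;
\end{align*}
for $n \Ge 2$, substitute \eqref{moment-b} at $v \in \child\rot$ and apply the preparatory identity at such $v$ to rewrite $(n-2)\alpha_v^2 - (n-3) = ((n-1)\|\slam e_v\|^2 - (n-2))/\|\slam e_v\|^2$. After cancellation with the factor $\|\slam e_v\|^{-2}$ in \eqref{moment-b}, formula \eqref{moment-a} emerges. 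The case $n=1$ reduces directly to $d_{\slam'}(\rot,1) = \|\slam e_\rot\|^{-2}$, which matches \eqref{moment-a} at $n=1$.

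The main obstacle is purely algebraic bookkeeping: one must carefully distinguish the root from $v \in V^\circ$ (the preparatory identity is unavailable at $\rot$ precisely because \eqref{hypo} does not constrain $\|\slam e_v\|$ for $v \in \child\rot$), and confirm that \eqref{hypo} supplies exactly the equalities $\|\slam e_u\| = \alpha_{\parent u}$ needed to drive both the derivation of the preparatory identity at $V^\circ$ and the inductive step at every vertex strictly below the root.
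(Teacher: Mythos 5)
Your proposal is correct and follows essentially the same route as the paper: leaflessness and $\|\slam e_u\|\Ge 1$ from Proposition \ref{2-is-sl}, the recurrence for $d_{\slam'}$ from Propositions \ref{cisws} and \ref{powers}, an induction on $n$ for \eqref{moment-b}, and then \eqref{moment-a} deduced from \eqref{moment-b} at the children of the root. Your single identity $\alpha_v^2 = 2 - \|\slam e_v\|^{-2}$ on $V^\circ$ is just a unified packaging of the paper's two relations (its equations for $u\in\child{\rot}$ and for $u\in\des{\childn{2}{\rot}}$), so the argument is the same in substance.
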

   \begin{proof}
By Lemma~ \ref{2-is-sl}, $\tcal$ is leafless and
$\|\slam e_u\| \Ge 1$ for all $u\in V$. Hence, by
\eqref{hypo}, the expressions appearing in
\eqref{moment-a} and \eqref{moment-b} make sense. It
follows from Lemma~ \ref{2-is-sl} and \eqref{hypo}
that
   \begin{gather} \label{2-iso-0}
   1- 2 \|\slam e_u\|^2 + \alpha^2_u
\|\slam e_u\|^2 = 0, \quad u \in \child{\rot},
   \\ \label{2-iso}
1- 2 \alpha^2_{\parent{u}} + \alpha^2_u
\alpha^2_{\parent{u}} = 0, \quad u \in V^{\circ}
\setminus
\child{\rot}\overset{\eqref{ind1}}=\des{\childn{2}{\rot}}.
   \end{gather}

Below we shall use Lemmata~ \ref{powers} and
\ref{cisws} without explicitly mentioning them. We
prove the equality \eqref{moment-b} by induction on
$n$. That it holds for $n=1$ follows from Lemma~
\ref{basicws}(i). Assume it holds for a fixed $n\in
\nbb$. Then we have
   \allowdisplaybreaks
   \begin{align*}  d_{\slam^{\prime}}(u,n+1) & = \sum_{v \in \child{u}}
\frac{|\lambda_v|^2} {\|\slam e_{\parent{v}}\|^4} \,
d_{\slam^{\prime}}(v,n)
   \\ &\hspace{-.3ex}\overset{(*)}= \frac{1}{\|\slam
e_{u}\|^4} \sum_{v \in \child{u}}
\frac{|\lambda_v|^2}{\|\slam e_{v}\|^2((n-1)
\alpha^2_v -(n-2))}
   \\ &\hspace{-.7ex}\overset{\eqref{hypo}}=
\frac{1}{\|\slam e_{u}\|^4} \sum_{v \in \child{u}}
\frac{|\lambda_v|^2}{\alpha_u^2((n-1) \alpha^2_v
-(n-2))}
   \\  & \hspace{-.7ex}\overset{\eqref{2-iso}}= \frac{\|\slam
e_u\|^{-2}}{n \alpha^2_u - (n-1)}, \quad u\in
V^{\circ},
   \end{align*}
where ($*$) is due to the induction hypothesis. Hence,
\eqref{moment-b} holds.

The equality \eqref{moment-a} will be deduced from
\eqref{moment-b}. The case of $n=1$ follows from
Lemma~ \ref{basicws}(i). Let us fix an integer $n\Ge
2$. Then we have \allowdisplaybreaks
   \begin{align}  \notag
d_{\slam^{\prime}}(\rot, n) & = \frac{1}{\|\slam e_{\rot}\|^4} \sum_{v \in \child{\rot}} |\lambda_v|^2 d_{\slam^{\prime}}(v, n-1)
   \\ \notag
&\hspace{-.7ex} \overset{\eqref{moment-b}}= \frac{1}{\|\slam e_{\rot}\|^4}\sum_{v
\in \child{\rot}} \frac{\|\slam
e_v\|^{-2} |\lambda_v|^2}{(n-2) \alpha^2_v -(n-3)}
   \\ \notag
&\hspace{-.7ex} \overset{\eqref{2-iso-0}}= \frac{1}{\|\slam e_{\rot}\|^4} \sum_{v \in
\child{\rot}} \frac{\|\slam
e_v\|^{-2} |\lambda_v|^2}{(n-2) (2- \|\slam
e_v\|^{-2}) -(n-3)},
   \\ \notag
&= \frac{1}{\|\slam e_{\rot}\|^4}\sum_{v \in \child{\rot}}
\frac{|\lambda_v|^2}{(n-1)\| \slam e_v\|^2 - (n-2)},
   \end{align}
which completes the proof.
   \end{proof}
   \begin{remark}
Note that the formula \eqref{moment-b} can be derived
from \eqref{cdformula} by using the fact that the
operator $T_{\lambdab}:=\slam |_{\mathcal M^{\perp}}$,
where $\mathcal M = \langle e_{\rot} \rangle,$ is a
$2$-isometry which satisfies the kernel condition and
the assumptions of Proposition \ref{restrcd} with
$\mathcal L=\mathcal M^{\perp}$. One may refer to
$\slam $ as a {\it rank one $2$-isometric extension}
of $T_{\lambdab}.$ We will show in Example
\ref{glowny} that the Cauchy dual subnormality problem
has a negative solution even for rank one
$2$-isometric extensions of $2$-isometries which
satisfy the kernel condition. \hfill $\diamondsuit$
   \end{remark}
Recall a criterion for a Stieltjes moment sequence to
have a backward extension.
   \begin{lemma}[\mbox{\cite[Lemma
6.1.2]{JJS}}] \label{bext} Let
$\{\gamma_n\}_{n=1}^\infty \subseteq \rbb_+$ be a
sequence such that $\{\gamma_{n+1}\}_{n=0}^\infty$ is
a Stieltjes moment sequence. Set $\gamma_0=1$. Then
the following conditions are equivalent{\em :}
   \begin{enumerate}
   \item[(i)] $\{\gamma_n\}_{n=0}^\infty$ is a
Stieltjes moment sequence,
   \item[(ii)] there exists a representing measure $\mu$
of $\{\gamma_{n+1}\}_{n=0}^\infty$ concentrated on
$\rbb_+$ such that\/\footnote{\;We adhere to the
convention that $\frac 1 0 := \infty$. Hence,
$\int_{\rbb_+} \frac 1 t d \mu(t) < \infty$ implies
$\mu(\{0\})=0$.} $\int_{\rbb_+} \frac 1 t d \mu(t) \Le
1$.
   \end{enumerate}
If $\mu$ is as in {\em (ii)}, then the positive Borel
measure $\nu$ on $\rbb$ defined by
   \begin{align*}
\nu(\varDelta) = \int_{\varDelta} \frac 1 t d \mu(t) +
\bigg(1-\int_{\rbb_+} \frac 1 t d \mu(t)\bigg)
\delta_0(\varDelta), \quad \varDelta \in \borel{\rbb},
   \end{align*}
is a representing measure of
$\{\gamma_n\}_{n=0}^\infty$ concentrated on $\rbb_+;$
moreover, $\nu(\{0\})=0$ if and only if $\int_{\rbb_+}
\frac 1 t d \mu(t)=1$.
   \end{lemma}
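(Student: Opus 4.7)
The plan is to prove the two implications by direct construction of the measures, then read off the supplementary claim about $\nu(\{0\})$ from the construction itself. The whole argument is essentially bookkeeping with the substitution ``divide/multiply by $t$'' on measures, and the convention $\frac{1}{0} := \infty$ ensures that the atom at $0$ is handled cleanly (any measure with finite integral of $\frac{1}{t}$ must be atomless at $0$).

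For the implication (ii)$\Rightarrow$(i), I would take the measure $\mu$ given by (ii) and simply verify that the explicit $\nu$ in the statement is a representing measure of $\{\gamma_n\}_{n=0}^\infty$. The convention $\frac{1}{0} := \infty$ combined with $\int_{\rbb_+} \frac{1}{t}\,d\mu(t) < \infty$ forces $\mu(\{0\}) = 0$, so the integral $\int_{\varDelta} \frac{1}{t}\,d\mu$ in the definition of $\nu$ makes sense and $\nu$ is a positive Borel measure concentrated on $\rbb_+$. A one-line computation gives, for $n \Ge 1$,
\begin{align*}
\int_{\rbb} t^n\,d\nu(t) = \int_{\rbb_+} t^{n-1}\,d\mu(t) = \gamma_n,
\end{align*}
since $\mu$ represents $\{\gamma_{n+1}\}_{n=0}^\infty$; for $n=0$ the two summands in the definition of $\nu$ add to $1 = \gamma_0$.

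For (i)$\Rightarrow$(ii), suppose $\nu$ is a representing measure of $\{\gamma_n\}_{n=0}^\infty$ concentrated on $\rbb_+$, and define $\mu$ by $\mu(\varDelta) := \int_{\varDelta} t\,d\nu(t)$. Then $\int_{\rbb_+} t^k\,d\mu(t) = \int_{\rbb_+} t^{k+1}\,d\nu(t) = \gamma_{k+1}$, so $\mu$ represents $\{\gamma_{n+1}\}_{n=0}^\infty$, and
\begin{align*}
\int_{\rbb_+} \tfrac{1}{t}\,d\mu(t) = \int_{(0,\infty)} d\nu(t) = \nu(\rbb_+) - \nu(\{0\}) \Le \gamma_0 = 1.
\end{align*}
This also identifies $\nu(\{0\}) = 1 - \int_{\rbb_+} \frac{1}{t}\,d\mu(t)$ for this particular $\mu$; however, since the Stieltjes moment problem associated to $\{\gamma_{n+1}\}_{n=0}^\infty$ may be indeterminate, I should point out that the direction (i)$\Rightarrow$(ii) only asserts the \emph{existence} of some suitable $\mu$ (namely, $t\,d\nu$), which is precisely what we have produced.

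Finally, for the supplementary part: given any $\mu$ as in (ii), the measure $\nu$ in the statement satisfies $\nu(\{0\}) = \int_{\{0\}} \frac{1}{t}\,d\mu(t) + \bigl(1 - \int_{\rbb_+} \frac{1}{t}\,d\mu\bigr)\delta_0(\{0\})$; the first summand vanishes because $\mu(\{0\}) = 0$ (again from the convention $\frac{1}{0} := \infty$), so $\nu(\{0\}) = 1 - \int_{\rbb_+} \frac{1}{t}\,d\mu(t)$, which is zero iff $\int_{\rbb_+} \frac{1}{t}\,d\mu(t) = 1$. There is no real obstacle here; the only subtlety to flag is the role of the $\frac{1}{0} := \infty$ convention in ruling out an atom at zero in $\mu$, which ties the formula together.
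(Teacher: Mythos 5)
Your proof is correct and complete. Note that the paper itself offers no proof of this lemma --- it is imported verbatim as \cite[Lemma 6.1.2]{JJS} --- so there is nothing in the text to compare against; your argument (passing from $\nu$ to $\mu$ via $d\mu = t\,d\nu$ and back via $d\nu = \frac{1}{t}\,d\mu$ plus a compensating atom at $0$, with the convention $\frac{1}{0}:=\infty$ forcing $\mu(\{0\})=0$) is the standard backward-extension argument, and your remark that (i)$\Rightarrow$(ii) only requires exhibiting \emph{some} admissible representing measure correctly handles the possible indeterminacy of the Stieltjes problem for $\{\gamma_{n+1}\}_{n=0}^\infty$.
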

The next lemma is an essential ingredient of the proof
of the implication (i)$\Rightarrow$(ii) of Theorem~
\ref{main2}. In fact, it covers the case of $k=1$ of
this implication.
   \begin{lemma} \label{d-graph2}
Let $\mathscr T$ and $\slam$ be as in Lemma~ {\em
\ref{d-graph}}. Assume that $\lambda_v \neq 0$ for
every $v\in \child{\rot}$. If the Cauchy dual
$\slam^\prime$ of $\slam$ is subnormal, then there
exists $\alpha \in \rbb_+$ such that $\|\slam e_v\| =
\alpha$ for all $v \in \child{\rot}.$
   \end{lemma}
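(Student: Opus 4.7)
The plan is to apply Lambert's criterion (Theorem \ref{Lam}) at the single vector $e_{\rot}$: subnormality of $\slam^\prime$ forces $\{d_{\slam^\prime}(\rot,n)\}_{n=0}^{\infty}$ to be a Stieltjes moment sequence. Because $d_{\slam^\prime}(\rot,0)=1$, this is a backward-extension question, so I would feed it to Lemma \ref{bext} after first identifying the (unique) representing measure of the tail $\{d_{\slam^\prime}(\rot,n+1)\}_{n=0}^{\infty}$.

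The identification goes as follows. Using the elementary rewriting $(n-1)\|\slam e_v\|^2-(n-2)=1+(n-1)(\|\slam e_v\|^2-1)$ in \eqref{moment-a} and invoking Lemma \ref{over} with $a=1$, $b=\|\slam e_v\|^2-1\Ge 0$, each summand of the tail is a Hausdorff moment sequence whose representing probability measure $\mu_{1,\|\slam e_v\|^2-1}$ on $[0,1]$ is explicitly given there. Therefore the tail has unique representing measure
\begin{equation*}
\mu=\frac{1}{\|\slam e_{\rot}\|^4}\sum_{v\in\child{\rot}}|\lambda_v|^2\,\mu_{1,\|\slam e_v\|^2-1}.
\end{equation*}
A direct use of \eqref{log} shows that $\int\frac{1}{t}\,d\mu_{1,b}$ equals $\frac{1}{1-b}$ when $0\Le b<1$ and diverges when $b\Ge 1$. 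Since $\lambda_v\neq 0$ on $\child{\rot}$, the condition $\int\frac{1}{t}\,d\mu\Le 1$ delivered by Lemma \ref{bext} already forces $\|\slam e_v\|^2<2$ for every $v\in\child{\rot}$, and then reduces to
\begin{equation*}
\sum_{v\in\child{\rot}}\frac{|\lambda_v|^2}{2-\|\slam e_v\|^2}\Le \Bigg(\sum_{v\in\child{\rot}}|\lambda_v|^2\Bigg)^{\!2}.
\end{equation*}

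The crux of the proof, and the step I expect to be the main obstacle until the reformulation above makes it visible, is that this last inequality is the exact reverse of the Cauchy--Schwarz inequality forced by the $2$-isometric identity of Proposition \ref{2-is-sl} (eq.\ \eqref{char2iso2}) at $u=\rot$, namely $\sum_v|\lambda_v|^2(2-\|\slam e_v\|^2)=1$. Indeed, applying Cauchy--Schwarz to the pair $\sqrt{|\lambda_v|^2(2-\|\slam e_v\|^2)}$ and $\sqrt{|\lambda_v|^2/(2-\|\slam e_v\|^2)}$ yields $(\sum_v|\lambda_v|^2)^2\Le\sum_v|\lambda_v|^2/(2-\|\slam e_v\|^2)$, the reverse of the displayed inequality. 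Consequently equality must hold throughout; the equality case of Cauchy--Schwarz, together with $\lambda_v\neq 0$ on $\child{\rot}$, forces $2-\|\slam e_v\|^2$ to be constant on $\child{\rot}$, and this common value determines the sought constant $\alpha$ with $\|\slam e_v\|=\alpha$ for all $v\in\child{\rot}$.
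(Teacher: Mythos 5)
Your proposal is correct and follows essentially the same route as the paper's proof: identify the unique representing measure of the tail $\{d_{\slam'}(\rot,n+1)\}_{n=0}^{\infty}$ as a weighted sum of the measures $\mu_{1,\|\slam e_v\|^2-1}$ from Lemma \ref{over}, use the backward-extension criterion of Lemma \ref{bext} together with \eqref{log} to force $\|\slam e_v\|^2<2$ and the inequality $\sum_{v}\frac{|\lambda_v|^2}{2-\|\slam e_v\|^2}\Le\|\slam e_{\rot}\|^4$, and then recognize this as the reverse of the Cauchy--Schwarz inequality coming from \eqref{char2iso2} at $u=\rot$, so that the equality case yields the constancy of $\|\slam e_v\|$ on $\child{\rot}$. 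The only cosmetic difference is that the paper splits $\child{\rot}$ into the sets where $\|\slam e_v\|=1$ and $\|\slam e_v\|>1$ before evaluating $\int\frac{1}{t}\,d\rho$, whereas you treat both cases uniformly via $\int\frac{1}{t}\,d\mu_{1,b}=\frac{1}{1-b}$ for $0\Le b<1$.
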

   \begin{proof}
Assume that $\slam^{\prime}$ is subnormal. It follows
from \eqref{defdun} applied to $\slam^{\prime}$ and
\eqref{moment-a} that
   \begin{align*}
\|\slam^{\prime (n+1)} e_{\rot}\|^2 = \frac{1}{\|\slam
e_{\rot}\|^4} \sum_{v \in \child{\rot}}
\frac{|\lambda_v|^2}{1+n(\|\slam e_v\|^2 - 1)}, \quad
n\in \zbb_+.
   \end{align*}
This combined with Lemmata~ \ref{over} and
\ref{d-graph} (as well as with the Lebesgue monotone
convergence theorem) implies that $\|\slam e_v\| \Ge
1$ for all $v\in V$ and $\{\|\slam^{\prime (n+1)}
e_{\rot}\|^2\}_{n=0}^{\infty}$ is a Hausdorff moment
sequence with a (unique) representing measure $\rho$
given by
   \begin{align}  \label{rmiar}
\rho = \frac{1}{\|\slam e_{\rot}\|^4} \sum_{v \in
\child{\rot}} |\lambda_v|^2 \mu_{1,\|\slam e_v\|^2 -
1}.
   \end{align}
Since, by Lambert's theorem (see Theorem~ \ref{Lam}),
$\{\|\slam^{\prime n} e_{\rot}\|^2\}_{n=0}^{\infty}$
is a Stieltjes moment sequence, we infer from Lemma~
\ref{bext} that
   \begin{align} \label{nier1}
\int_{[0,1]} \frac{1}{t} d \rho(t) \Le 1.
   \end{align}
Set $\varSigma_1 = \{v \in \child{\rot}\colon \|\slam
e_v\|=1\}$ and $\varSigma_2 = \{v \in
\child{\rot}\colon \|\slam e_v\|>1\}$. Note that
$\child{\rot} = \varSigma_1 \sqcup \varSigma_2$ (the
disjoint sum). It follows from Lemma~ \ref{over} that
      \begin{align} \notag
\|\slam e_{\rot}\|^4 \int_{[0,1]} \frac{1}{t} & d
\rho(t) \overset{\eqref{rmiar}}= \sum_{v \in
\child{\rot}} |\lambda_v|^2 \int_{[0,1]} \frac{1}{t} d
\mu_{1,\|\slam e_v\|^2 - 1}(t)
   \\ \label{calka1}
&= \sum_{v \in \varSigma_1} |\lambda_v|^2 + \sum_{v
\in \varSigma_2} \frac{|\lambda_v|^2}{\|\slam e_v\|^2
- 1} \int_{[0,1]} t^{\frac{1}{\|\slam e_v\|^2 - 1} -
2} d t.
   \end{align}
Since, by assumption, $\lambda_v \neq 0$ for all $v\in
\varSigma_2$, the conditions \eqref{log},
\eqref{nier1} and \eqref{calka1} imply that $\|\slam
e_v\|^2 < 2$ for all $v\in \varSigma_2$, and thus for
all $v\in \child{\rot}$. Therefore, by \eqref{log} and
\eqref{calka1}, we have
   \begin{align*}
\|\slam e_{\rot}\|^4 \int_{[0,1]} \frac{1}{t} d
\rho(t) = \sum_{v \in \varSigma_1} |\lambda_v|^2 +
\sum_{v \in \varSigma_2}
\frac{|\lambda_v|^2}{2-\|\slam e_v\|^2} = \sum_{v \in
\child{\rot}} \frac{|\lambda_v|^2}{2-\|\slam e_v\|^2}.
   \end{align*}
This together with \eqref{nier1} yields
   \begin{align} \label{sum2-4}
\sum_{v \in \child{\rot}}
\frac{|\lambda_v|^2}{2-\|\slam e_v\|^2} \Le \|\slam
e_{\rot}\|^4.
   \end{align}
Now observe that
   \allowdisplaybreaks
   \begin{align*}
\sum_{v \in \child{\rot}}
\frac{|\lambda_v|^2}{2-\|\slam e_v\|^2} & \Le
\bigg(\sum_{v\in\child{\rot}} |\lambda_v|^2\bigg)^2
\quad \quad \quad \Big(\text{by \eqref{sum2-4} and
Lemma~ \ref{basicws}(i)}\Big)
   \\
& = \bigg(\sum_{v\in\child{\rot}}
\frac{|\lambda_v|^2(2-\|\slam e_v\|^2)}{2-\|\slam
e_v\|^2}\bigg)^2
   \\
& \hspace{-.3ex}\overset{(*)}\Le
\sum_{v\in\child{\rot}} \frac{|\lambda_v|^2(2-\|\slam
e_v\|^2)}{(2-\|\slam e_v\|^2)^2} \cdot
\sum_{v\in\child{\rot}} |\lambda_v|^2 (2-\|\slam
e_v\|^2)
   \\
& = \sum_{v\in\child{\rot}}
\frac{|\lambda_v|^2}{2-\|\slam e_v\|^2} \quad \quad
\quad (\text{apply \eqref{char2iso2} to $u=\rot$}),
   \end{align*}
where ($*$) follows from the Cauchy-Schwarz
inequality. Hence, equality holds in the
Cauchy-Schwarz inequality ($*$). This means that
$\Big\{\frac{\lambda_v\sqrt{2-\|\slam
e_v\|^2}}{2-\|\slam e_v\|^2}\Big\}_{v\in\child{\rot}}$
and $\Big\{\lambda_v\sqrt{2-\|\slam
e_v\|^2}\Big\}_{v\in\child{\rot}}$ are linearly
dependent vectors in $\ell^2(\child{\rot})$. Since the
weights $\{\lambda_v\}_{v\in \child{\rot}}$ are
nonzero, we deduce that there exists $\alpha \in
\rbb_+$ such that $\|\slam e_v\| = \alpha$ for every
$v \in \child{\rot},$ which completes the proof.
   \end{proof}
We are in a position to prove the main result of this
section, which can be thought of as a reconstruction
theorem.
   \begin{theorem} \label{main2}
Let $\slam\in \B(\ell^2(V))$ be a $2$-isometric
weighted shift on a rooted directed tree $\mathscr T,$
which satisfies \eqref{hypok} for some $k\in \nbb$ and
$\{\alpha_v\}_{v\in \des{\childn{k}{\rot}}} \subseteq
\rbb_+$. Let $\lambda_v \neq 0$ for all $v \in
\bigsqcup_{i=1}^k \childn{i} {\rot}$. Then the
following conditions are equivalent{\em :}
   \begin{enumerate}
   \item[(i)] the Cauchy dual $\slam^\prime$ of
$\slam$ is subnormal,
   \item[(ii)] there exists a family $\{\alpha_v\}_{v\in
\bigsqcup_{i=0}^{k-1} \childn{i} {\rot}} \subseteq
\rbb_+$ such that
   \begin{align*}
\|\slam e_u\|=\alpha_{\parent{u}}, \quad u \in
\bigsqcup_{i=1}^k \childn{i} {\rot},
   \end{align*}
   \item[(iii)] $\slam$ satisfies the condition {\em
(iii)} of Lemma~ {\em \ref{kcfws}},
   \item[(iv)] $\slam^*\slam(\ker{\slam^*}) \subseteq
\ker{\slam^*}$.
   \end{enumerate}
   \end{theorem}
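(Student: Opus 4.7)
The proof pivots on the implication (i) $\Rightarrow$ (ii); the other directions are routine consequences of earlier material. Indeed, (iii) $\Rightarrow$ (i) is Theorem \ref{cdsubn}, and (iii) $\Leftrightarrow$ (iv) follows from Proposition \ref{kcfws} (since $\tcal$ is leafless by Proposition \ref{2-is-sl}, and the nonzero weight hypothesis at generations $1, \ldots, k$ combined with \eqref{hypok} is enough to upgrade item (ii) of that proposition to its item (iii) on all of $V$). The implication (ii) $\Rightarrow$ (iii) is a patching argument: the family supplied by (ii) covers generations $0, \ldots, k-1$, while \eqref{hypok} supplies one on generations $\geq k$; together they produce the single family $\{\alpha_v\}_{v \in V}$ required by the (iii) of Proposition \ref{kcfws}.

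For (i) $\Rightarrow$ (ii), my plan is downward induction on $j \in \{k, k-1, \ldots, 1\}$, where at step $j$ I prove that for every $v \in \childn{j-1}{\rot}$ the norm $\|\slam e_w\|$ is constant for $w \in \child{v}$. The base datum is \eqref{hypok} itself, which already yields this constancy for every $v$ at generation $\geq k$. For the inductive step, fix $v \in \childn{j-1}{\rot}$ and let $\tcal_v$ denote the rooted directed subtree with root $v$ and vertex set $\des{v}$. Because $\slam^*\slam$ is diagonal with respect to the basis $\{e_u\}_{u \in V}$ (Proposition \ref{powers}) and $\ell^2(\des{v})$ is plainly $\slam$-invariant, Proposition \ref{restrcd} applies and shows that $\slam_v := \slam|_{\ell^2(\des{v})}$ is left-invertible with Cauchy dual $\slam^\prime|_{\ell^2(\des{v})}$; the latter is subnormal, as the restriction of the subnormal operator $\slam^\prime$ to an invariant subspace. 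Now $\slam_v$ is itself a $2$-isometric weighted shift on $\tcal_v$, and the inductive hypothesis translates on $\tcal_v$ into the norm condition at generations $\geq 2$ of $\tcal_v$, which is exactly the hypothesis \eqref{hypo} of Lemma \ref{d-graph}. Moreover, $\lambda_w \neq 0$ for each $w \in \child{v}$ because these children lie at generation $j \leq k$ in $\tcal$, where nonzero weights are assumed. Lemma \ref{d-graph2} applied to $\slam_v$ therefore produces the required constant $\alpha_v$ with $\|\slam e_w\| = \alpha_v$ for all $w \in \child{v}$.

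Running the downward induction from $j=k$ to $j=1$ assembles the full family $\{\alpha_v\}_{v\in \bigsqcup_{i=0}^{k-1}\childn{i}{\rot}}$ required by (ii). The main obstacle lies in the structural bookkeeping around the restriction procedure: one must verify that $\slam_v$ genuinely inherits the $2$-isometry property and the (appropriately shifted) form of the perturbed kernel condition on $\tcal_v$, and that the Cauchy dual commutes with restriction to $\ell^2(\des{v})$ as claimed by Proposition \ref{restrcd}. Once these structural facts are in place, all of the analytical content is already encapsulated in Lemma \ref{d-graph2}, whose proof in turn rests on the backward-extension criterion for Stieltjes moment sequences (Lemma \ref{bext}) and the explicit formulas for $d_{\slam^\prime}$ from Lemma \ref{d-graph}.
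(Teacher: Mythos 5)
Your proposal is correct and follows essentially the same route as the paper: the nontrivial implication (i)$\Rightarrow$(ii) is handled by restricting $\slam$ to $\ell^2(\des{v})$ for $v$ at generation $j-1$ (using that $\slam^*\slam$ is diagonal so that Proposition \ref{restrcd} applies and the Cauchy dual commutes with restriction), invoking Lemma \ref{d-graph2} on the resulting subtree shift, and running a reverse induction from generation $k$ down to $1$; the remaining implications are dispatched via Proposition \ref{kcfws} and Theorem \ref{cdsubn} exactly as in the paper. The structural points you flag at the end (the restriction being a $2$-isometric weighted shift on $\tcal_{\des{v}}$ satisfying \eqref{hypo}) are precisely what the paper verifies via \cite[Propositions 2.1.8 and 2.1.10]{JJS}, so no genuine gap remains.
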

   \begin{proof}
(i)$\Rightarrow$(ii) Fix $v\in \childn{k-1}{\rot}$.
Note that the space $\ell^2(\des{v})$ (which is
identified with the closed vector subspace
$\chi_{\des{v}}\cdot \ell^2(V)$ of $\ell^2(V)$) is
invariant for $\slam$ and $\slam|_{\ell^2(\des{v})}$
coincides with the weighted shift $S_{\lambdab^{|v
\rangle}}$ on the directed tree
$\tcal_{\des{v}}:=\big(\des{v}, (\des{v} \times
\des{v}) \cap E\big)$ with weights $\lambdab^{|v
\rangle}:=\{\lambda_u\}_{u\in \des{v}\setminus \{v\}}$
(see \cite[Proposition~ 2.1.8]{JJS} for more details).
It follows from \cite[Proposition~ 2.1.10]{JJS} and
the fact that $v$ is a root of $\tcal_{\des{v}}$ that
   \begin{gather}  \notag
\childs{\tcal_{\des{v}}}{v} = \child{v} \subseteq
\child{\childn{k-1}{\rot}} = \childn{k}{\rot},
   \\ \notag
\parents{\tcal_{\des{v}}}{u} = \parent{u} \text{ for all }
u \in \des{v}\setminus \{v\},
   \\ \label{relat1}
\dess{\tcal_{\des{v}}}{\childns{\tcal_{\des{v}}}{2}{v}}
= \des{\childn{2}{v}} \subseteq
\des{\childn{k+1}{\rot}}.
   \end{gather}
(The expressions $\parents{\tcal_{\des{v}}}{\cdot}$,
$\childs{\tcal_{\des{v}}}{\cdot}$,
$\childns{\tcal_{\des{v}}}{2}{\cdot}$ and
$\dess{\tcal_{\des{v}}}{\cdot}$ are understood with
respect to the directed subtree $\tcal_{\des{v}}$.)
This and \eqref{hypok} imply that $S_{\lambdab^{|v
\rangle}}$ satisfies the assumptions of Lemma~
\ref{d-graph2}. Applying Lemma~ \ref{basicws}(v) and
Proposition~ \ref{restrcd} to $T=\slam$ and $\mathcal
L= \ell^2(\des{v})$, we deduce that the Cauchy dual
$S_{\lambdab^{|v \rangle}}^\prime$ of $S_{\lambdab^{|v
\rangle}}$ is subnormal. Hence, by Lemma~
\ref{d-graph2}, there exists $\alpha_{v} \in \rbb_+$
such that (cf.\ \eqref{relat1})
   \begin{align*}
\|\slam e_u\|=\|S_{\lambdab^{|v \rangle}}
(e_u|_{\des{v}})\| = \alpha_{v} =
\alpha_{\parent{u}}\text{ for all } u\in
\childs{\tcal_{\des{v}}}{v} = \child{v}.
   \end{align*}
Summarizing, we have proved that there exists a family
$\{\alpha_v\}_{v\in \childn{k-1} {\rot}} \subseteq
\rbb_+$ such that $\|\slam e_u\|=\alpha_{\parent{u}}$
for all $u \in \child{v}$ and $v \in \childn{k-1}
{\rot}$. Since, by \cite[(2.2.6)]{BJJS},
$\childn{k}{\rot} = \bigsqcup_{v\in
\childn{k-1}{\rot}} \child{v}$, we see that $\|\slam
e_u\|=\alpha_{\parent{u}}$ for all $u\in
\childn{k}{\rot}$. Now, using reverse induction on
$k$, we conclude that (ii) holds.

Since, in view of \eqref{ind1}, the implication
(ii)$\Rightarrow$(iii) is obvious and the implications
(iii)$\Rightarrow$(iv) and (iv)$\Rightarrow$(i) are
direct consequences of Lemma~ \ref{kcfws} and Theorem~
\ref{cdsubn} respectively, the proof is complete.
   \end{proof}
We conclude this section by answering the Cauchy dual
subnormality problem in the negative. The
counterexample presented below is built over the
directed tree $\tcal_{2,0}$ (see Example~
\ref{trzykrow}(b)). It is easily seen that similar
counterexamples can be built over any directed tree of
the form $\tcal_{\eta,0}$, where $\eta \in
\{2,3,4,\ldots\} \cup \{\infty\}$.
   \begin{example} \label{glowny}
Let $y_1,y_2\in \rbb$ be such that $1 < y_1, y_2 <
\sqrt{2}$ and $y_1\neq y_2$. Then there exist positive
real numbers $x_1$ and $x_2$ such that
   \begin{align*}
\sum_{i=1}^2 x_i^2(2-y_i^2)=1 \qquad \text{(e.g.,
$x_i=\frac{1}{\sqrt{2(2-y_i^2)}}$ for $i=1,2$).}
   \end{align*}
Let $\slam$ be the weighted shift on $\tcal_{2,0}$
with weights $\lambdab=\{\lambda_v\}_{v\in
V_{2,0}^{\circ}}$ defined by
   \begin{align*}
\lambda_{i,j}=
   \begin{cases}
x_i & \text{if } j = 1,
   \\
\xi_{j-2}(y_i) & \text{if } j\Ge 2,
   \end{cases}
\qquad i = 1,2.
   \end{align*}
By Lemma~ \ref{basicws}(ii), $\slam \in
\B(\ell^2(V_{2,0}))$ and $\|\slam\|=\max\Big\{y_1,y_2,
\sqrt{\sum_{i=1}^2 x_i^2}\,\Big\}$. It is a matter of
routine to show that $\slam$ satisfies the condition
\eqref{char2iso2} and thus, by Lemma~ \ref{2-is-sl},
$\slam$ is a $2$-isometry. Moreover, by Lemma~
\ref{basicws}(viii), $\slam$ is completely
non-unitary. Note that $\slam$ satisfies the condition
\eqref{hypo} for $\{\alpha_v\}_{v\in V_{2,0}^{\circ}}
\subseteq \rbb_+$ given by $\alpha_{i,j} = \xi_j(y_i)$
for $i\in \{1,2\}$ and $j\in \{1,2,\dots\}$. Since the
weights of $\slam$ are nonzero and $\|\slam e_{1,1}\|
= y_1 \neq y_2 = \|\slam e_{2,1}\|$, we infer from
Theorem~ \ref{main2} with $k=1$ (see also Lemma~
\ref{d-graph2}) that the Cauchy dual $\slam^{\prime}$
of $\slam$ is not subnormal.
   \hfill $\diamondsuit$
   \end{example}
   \section{\label{Sec11}The Cauchy dual subnormality problem for
adjacency operators} In this section, we turn our
attention to weighted shifts on directed trees with
weights whose moduli are constant on $\child{u}$ for
every vertex $u$. This class of operators contains the
class of adjacency operators of directed trees; the
latter class plays an important role in graph theory
(see \cite{JJS} for more details). We prove that the
Cauchy dual of a $2$-isometric adjacency operator of a
directed tree is subnormal if the directed tree
satisfies certain degree constraints (see Theorem~
\ref{constant-t}). However, as shown in Example~
\ref{przadj} below, the Cauchy dual subnormality
problem has a negative solution even in the class of
adjacency operators of directed trees.

We begin by proving a preparatory lemma.
   \begin{lemma} \label{adja1}
Let $\slam \in \B(\ell^2(V))$ be a weighted shift on a
leafless and locally finite directed tree $\tcal$ such
that
   \begin{align} \label{beta}
|\lambda_{v}| = \beta_{\parent{v}}, \quad v\in
V^{\circ},
   \end{align}
for some $\{\beta_u\}_{v\in V} \subseteq (0,\infty)$.
Then the following statements hold{\em :}
   \begin{enumerate}
   \item[(i)] $\slam $ is a $2$-isometry if and only if
   \begin{align*}
\sum_{v \in \child{u}} \beta_{v}^2 \deg{v} =
\frac{2\beta_u^2 \deg{u} - 1}{\beta_u^2}, \quad u \in
V,
   \end{align*}
   \item[(ii)]
if $\slam$ is left-invertible, then
$d_{\slam^{\prime}}(u, 0)=1$ for all $u \in V$ and
   \begin{align*}
d_{\slam^{\prime}}(u, n+1) =
\frac{1}{{\beta^{2}_u}{(\deg{u})^{2}}} \sum_{v \in
\child{u}} d_{\slam^{\prime}}(v, n), \quad u \in V, \,
n \in \zbb_+,
   \end{align*}
where $d_{\slam^{\prime}}$ is given by \eqref{defdun}
with $\slam^{\prime}$ in place of $\slam$ $($see
Lemma~ {\em \ref{cisws}}$)$.
   \end{enumerate}
   \end{lemma}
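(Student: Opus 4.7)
Both assertions will follow by straightforward computation from the results already established in this section. The key observation is that under \eqref{beta}, Lemma \ref{basicws}(i) gives
\[
\|\slam e_u\|^2 = \sum_{v \in \child{u}} |\lambda_v|^2 = \sum_{v \in \child{u}} \beta_u^2 = \beta_u^2 \deg{u}, \quad u \in V,
\]
since $\tcal$ is locally finite and each $v \in \child{u}$ has $\parent{v} = u$.

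For (i), plug this expression into the criterion \eqref{char2iso} from Proposition \ref{2-is-sl}: $\slam$ is a $2$-isometry if and only if
\[
1 - 2\beta_u^2 \deg{u} + \sum_{v \in \child{u}} \beta_u^2 \beta_v^2 \deg{v} = 0, \quad u \in V.
\]
Since $\beta_u > 0$, this rearranges to the displayed identity in (i).

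For (ii), note that left-invertibility of $\slam$ together with Proposition \ref{cisws} tells us that $\slam'$ is itself a weighted shift on $\tcal$, whose weight at $v \in V^{\circ}$ has modulus
\[
\frac{|\lambda_v|}{\|\slam e_{\parent{v}}\|^2} = \frac{\beta_{\parent{v}}}{\beta_{\parent{v}}^2 \deg{\parent{v}}} = \frac{1}{\beta_{\parent{v}} \deg{\parent{v}}}.
\]
Applying the recurrence \eqref{snssn2} of Proposition \ref{powers} to $\slam'$ and using that $\parent{v} = u$ whenever $v \in \child{u}$, one gets for all $u \in V$ and $n \in \zbb_+$,
\[
d_{\slam'}(u, n+1) = \sum_{v \in \child{u}} \frac{1}{\beta_u^2 (\deg{u})^2}\, d_{\slam'}(v,n) = \frac{1}{\beta_u^2 (\deg{u})^2} \sum_{v \in \child{u}} d_{\slam'}(v,n),
\]
while $d_{\slam'}(u,0) = 1$ is \eqref{snssn1}.

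There is no real obstacle here: the lemma is essentially a bookkeeping consequence of Lemma \ref{basicws}, Proposition \ref{2-is-sl}, Proposition \ref{cisws} and Proposition \ref{powers}. The only point to be careful about is to invoke local finiteness so that the sums $\sum_{v \in \child{u}} \beta_u^2$ are finite products $\beta_u^2 \deg{u}$ (otherwise the boundedness assumption $\slam \in \B(\ell^2(V))$ would already force $\beta_u = 0$ for some $u$, contradicting $\beta_u > 0$). Leaflessness guarantees $\deg{u} \Ge 1$ so no vacuous issues arise.
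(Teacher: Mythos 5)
Your proof is correct and follows essentially the same route as the paper: compute $\|\slam e_u\|^2=\beta_u^2\deg{u}$ from Lemma \ref{basicws}(i) and \eqref{beta}, deduce (i) from the $2$-isometry criterion of Proposition \ref{2-is-sl} (you use \eqref{char2iso}, the paper cites the equivalent form \eqref{char2iso2}), and deduce (ii) by applying the recurrence \eqref{snssn1}--\eqref{snssn2} to the Cauchy dual weighted shift supplied by Proposition \ref{cisws}. The paper's proof is just a terser version of the same bookkeeping.
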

   \begin{proof}
It follows from Lemma~ \ref{basicws}(i) and
\eqref{beta} that
   \begin{align} \label{normbeta}
\|\slam e_u\|^2 = \beta_u^2 \deg{u}, \quad u\in V.
   \end{align}
The statement (i) can be straightforwardly deduced
from \eqref{char2iso2}, \eqref{beta} and
\eqref{normbeta}. In turn, the statement (ii) can be
easily inferred from \eqref{snssn1} and \eqref{snssn2}
applied to $\slam^{\prime}$ by using \eqref{beta} and
\eqref{normbeta}.
   \end{proof}
By the {\em adjacency operator} of a directed tree
$\tcal,$ we understand the weighted shift $\slamj$ on
$\tcal$ all of whose weights are equal to $1$ (see
\cite[p.\ 1]{JJS} for more information). Note that in
general, adjacency operators may not even be densely
defined (see \cite[Proposition~ 3.1.3]{JJS}).

Below we describe some classes of $2$-isometric
adjacency operators including those satisfying the
kernel condition, Brownian isometries and
quasi-Brownian isometries (see Proposition~
\ref{stwfa}). The following preliminary result
characterizes isometric adjacency operators.
   \begin{lemma} \label{Hay2}
If $\slamj$ is the adjacency operator of a directed
tree $\tcal,$ then the following conditions are
equivalent{\em :}
   \begin{enumerate}
   \item[(i)] $\slamj$ is an isometry on $\ell^2(V)$,
   \item[(ii)] $\deg{u}=1$ for all $u\in V,$
   \item[(iii)] $\tcal$ is graph
isomorphic either to $\zbb_+$ or to $\zbb.$
   \end{enumerate}
   \end{lemma}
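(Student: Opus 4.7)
The plan is to prove the chain (i)$\Rightarrow$(ii)$\Rightarrow$(iii)$\Rightarrow$(i), using as a central observation that for the adjacency operator Lemma \ref{basicws}(i) gives $\|\slamj e_u\|^2 = \deg u$ for every $u\in V$. The implication (i)$\Rightarrow$(ii) is then immediate: if $\slamj$ is an isometry then $\slamj^*\slamj=I$, so $\deg u = \|\slamj e_u\|^2 = \langle e_u,e_u\rangle = 1$ for each $u\in V$. The implication (iii)$\Rightarrow$(i) is classical: the adjacency operator of $\zbb_+$ (resp.\ $\zbb$) is the unilateral (resp.\ bilateral) unweighted shift, which is an isometry (resp.\ a unitary), and any graph isomorphism of $\tcal$ with $\zbb_+$ or $\zbb$ induces a unitary on $\ell^2$ intertwining the adjacency operators.

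The real content is (ii)$\Rightarrow$(iii), which I would split into the rooted and rootless cases. Assume (ii). If $\tcal$ is rooted with root $\omega$, I would invoke the disjoint decomposition \eqref{ind1}, $V = \bigsqcup_{n=0}^\infty \childn{n}{\omega}$, and argue by induction that each $\childn{n}{\omega}$ is a singleton: $|\childn{0}{\omega}|=1$, and if $|\childn{n}{\omega}|=1$ then, because its unique vertex has exactly one child, $|\childn{n+1}{\omega}| = 1$ as well. Labelling the unique element of $\childn{n}{\omega}$ by $v_n$, the map $\zbb_+\ni n\mapsto v_n\in V$ is a bijection sending $n+1$ to the unique child of $v_n$, hence a graph isomorphism of $\zbb_+$ onto $\tcal$.

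If instead $\tcal$ is rootless, I would fix an arbitrary $v_0 \in V$ and extend bi-infinitely by setting $v_{n+1}$ to be the unique child of $v_n$ for $n\ge 0$ and $v_{-n-1} := \parent{v_{-n}}$ for $n\ge 0$; both operations are well-defined, the first because $\deg v_n = 1$ and the second because $\tcal$ is rootless. The map $\zbb\ni n\mapsto v_n$ is injective, since an equality $v_n = v_m$ with $n<m$ would yield a directed cycle $v_n\to v_{n+1}\to\cdots\to v_m=v_n$, which is forbidden in a directed tree. The main obstacle is surjectivity. For this I would use that the underlying undirected graph of $\tcal$ is connected (by the very definition of a directed tree in \cite{JJS}): given any $w\in V$, choose an undirected path $w=u_0,u_1,\ldots,u_k=v_0$ in $\tcal$, and prove by downward induction on $i$ that every $u_i$ lies in $\{v_n\colon n\in\zbb\}$. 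Indeed, if $u_{i+1}=v_m$, then the edge between $u_i$ and $u_{i+1}$ either points from $u_i$ to $v_m$, forcing $u_i = \parent{v_m} = v_{m-1}$, or points from $v_m$ to $u_i$, forcing $u_i\in\child{v_m}=\{v_{m+1}\}$ (because $\deg v_m = 1$); in either case $u_i\in\{v_n\colon n\in\zbb\}$. Thus $w=u_0\in\{v_n\colon n\in\zbb\}$, the map $n\mapsto v_n$ is a graph isomorphism of $\zbb$ onto $\tcal$, and the proof is complete.
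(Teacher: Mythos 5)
Your proof is correct and follows essentially the same route as the paper: the operator-theoretic part rests on the identity $\|\slamj e_u\|^2=\deg u$ (the special case $\beta_u\equiv 1$ of \eqref{normbeta}), and the graph-theoretic part uses the generation decomposition \eqref{ind1} in the rooted case and connectedness plus uniqueness of parents in the rootless case. The only difference is that the paper delegates both halves to citations from \cite{JJS} (Corollary 3.4.4 for (i)$\Leftrightarrow$(ii), Corollary 2.1.5 and Proposition 2.1.6 for (ii)$\Leftrightarrow$(iii)), whereas you write out the underlying arguments in full.
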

   \begin{proof}
That (i) and (ii) are equivalent follows from
\cite[Corollary~ 3.4.4]{JJS} and \eqref{normbeta}. In
turn, the equivalence (ii)$\Leftrightarrow$(iii) can
be deduced from \cite[Corollary~ 2.1.5 and
Proposition~ 2.1.6]{JJS}).
   \end{proof}
   Now we are in a position to describe the aforesaid
classes of $2$-isometric adjacency operators (see
Example~ \ref{trzykrow} for necessary definitions).
   \begin{proposition} \label{stwfa}
If $\slamj \in \B(\ell^2(V))$ is the adjacency
operator of a directed tree $\tcal,$ then the
following assertions are valid{\em :}
   \begin{enumerate}
   \item[(i)] $\slamj$ is a $2$-isometry
satisfying the kernel condition if and only if $\tcal$
is graph isomorphic either to $\zbb_+$ or to $\zbb,$
   \item[(ii)] if $\tcal$ is rooted, then $\slamj$ is
a Brownian isometry if and only if $\tcal$ is graph
isomorphic to $\zbb_+,$
   \item[(iii)] if $\tcal$ is rooted, then $\slamj$
is a quasi-Brownian isometry if and only if either
$\tcal$ is graph isomorphic to $\zbb_+$ or $\tcal$ is
a quasi-Brownian directed tree,
   \item[(iv)] if $\tcal$ is rootless, then $\slamj$
is a quasi-Brownian isometry if and only if $\slamj$
is a Brownian isometry, or equivalently, if and only
if either $\tcal$ is graph isomorphic to $\zbb$ or
$\tcal$ is a quasi-Brownian directed tree.
   \end{enumerate}
   \end{proposition}
   \begin{proof}
Since directed trees admitting $2$-isometric weighted
shifts are automatically leafless, we may assume
without loss of generality that $\tcal$ is leafless.

(i) It suffices to prove the ``only if'' part. Assume
that $\slamj\in \B(\ell^2(V))$ is a $2$-isometry
satisfying the kernel condition. First, observe that
$\slamj$ satisfies the condition \eqref{hypo+} for
some $\{\alpha_v\}_{v\in V} \subseteq \rbb_+$ (see
Remark~ \ref{codla2}). In view of Proposition~
\ref{bilat} and Lemma~ \ref{Hay2}, we can assume that
$\tcal$ has a root. It follows from Lemma~
\ref{basicws}(i) and the implication
(i)$\Rightarrow$(ii) of Proposition~ \ref{2iso-kc}
that $\deg{v} = \xi_n(\sqrt{\deg{\rot}}\,)^2$ for all
$v \in \childn{n}{\rot}$ and $n\in \zbb_+$. Hence, by
\eqref{ind1}, we see that $\deg{v}=\deg{w}$ for all
$v,w \in \child{u}$ and $u\in V$. This combined with
Lemmata~ \ref{basicws}(ii) and \ref{adja1}(i) (the
latter applied to $\beta_u\equiv 1$) implies that
$\sup_{u\in V} \deg u=\|\slamj\|^2$ and
   \begin{align*}
\deg{u} \deg{w} = \sum_{v \in \child{u}} \deg{v} = 2
\deg{u} - 1, \quad u \in V, \, w\in \child{u}.
   \end{align*}
As a consequence, we deduce that $\deg{u}=1$ for all
$u\in V$, which by Lemma~ \ref{Hay2} implies that
$\tcal$ is graph isomorphic to $\zbb_+.$

Before proving the assertions (ii)-(iv), we show that
   \begin{align} \label{sobota}
   \begin{minipage}{65ex}
if $\slamj\in \B(\ell^2(V)),$ then $\triangle_{\slamj}
\Ge 0;$ \\ if additionally $\triangle_{\slamj}
\slamj=\triangle_{\slamj}^{1/2} \slamj
\triangle_{\slamj}^{1/2}$, then \eqref{Hak2} holds.
   \end{minipage}
   \end{align}
Indeed, by Lemma~ \ref{basicws}, we see that
$\triangle_{\slamj} \Ge 0$ and
   \begin{align*}
\triangle_{\slamj} \slamj(e_u) = \triangle_{\slamj}
\bigg(\sum_{v\in \child{u}} e_v\bigg) = \sum_{v\in
\child{u}} (\deg{v}-1)e_v, \quad u \in V.
   \end{align*}
Similarly
   \begin{align*}
\triangle_{\slamj}^{1/2} \slamj
\triangle_{\slamj}^{1/2} (e_u) & = (\deg{u}-1)^{1/2}
\triangle_{\slamj}^{1/2} \slamj (e_u)
   \\
& = (\deg{u}-1)^{1/2} \sum_{v\in \child{u}}
(\deg{v}-1)^{1/2} e_v, \quad u \in V.
   \end{align*}
Hence, $\triangle_{\slamj}
\slamj=\triangle_{\slamj}^{1/2} \slamj
\triangle_{\slamj}^{1/2}$ if and only if
   \begin{align} \label{HaHa}
(\deg{u}-1)^{1/2} (\deg{v}-1)^{1/2} = (\deg{v}-1),
\quad v \in \child{u}, \, u\in V.
   \end{align}
This implies \eqref{sobota}.

(ii) This is a direct consequence of Proposition~
\ref{briai} and Lemma~ \ref{Hay2}.

(iii) Suppose $\tcal$ is rooted. Assume $\slamj$ is a
quasi-Brownian isometry. Set $l=\deg{\omega}.$
Clearly, $l<\infty.$ If $l=1,$ then in view of
\eqref{ind1}, \eqref{HaHa} and Lemma~ \ref{Hay2},
$\tcal$ is graph isomorphic to $\zbb_+.$ If $l \Ge 2,$
then by \eqref{sobota} and Lemmata~ \ref{qbchar} and
\ref{adja1}(i), $\tcal$ is a quasi-Brownian directed
tree. The converse implication is obvious in the case
when $\tcal$ is graph isomorphic to $\zbb_+$ (see
Lemma~ \ref{Hay2}). In turn, if $\tcal$ is a rooted
quasi-Brownian directed tree, then \eqref{HaHa} holds,
and consequently $\triangle_{\slamj}
\slamj=\triangle_{\slamj}^{1/2} \slamj
\triangle_{\slamj}^{1/2}$, which in view of Lemmata~
\ref{qbchar} and \ref{adja1}(i) shows that $\slamj$ is
a quasi-Brownian isometry.

(iv) Suppose $\tcal$ is rootless. Assume $\slamj$ is a
quasi-Brownian isometry. By Lemma~ \ref{Hay2}, we may
assume that there exists $u_0\in V$ such that
$l:=\deg{u_0} \in \{2,3,4,\ldots\}.$ Set $u_n=
\mathsf{par}^n(u_0)$ for $n\in \nbb.$ It follows from
\eqref{Hak2} (see \eqref{sobota}) that
$\deg{u_n}=\deg{u_{n+1}}$ for all $n\in \zbb_+.$ This
implies that $\deg{u_n}=l$ for all $n\in \zbb_+.$
Applying \eqref{sobota} and Lemmata~ \ref{qbchar} and
\ref{adja1}(i), we deduce that for every $n\in
\zbb_+,$ the rooted directed tree
   \begin{align*}
\tcal_{\des{u_n}}:=\big(\des{u_n}, (\des{u_n} \times
\des{u_n}) \cap E\big)
   \end{align*}
is a quasi-Brownian directed tree of valency $l.$ This
together with \cite[Proposition~ 2.1.6(iii)]{JJS}
implies that the directed tree $\tcal$ itself is a
quasi-Brownian directed tree of valency $l.$

In view of Corollary~ \ref{brareqbr}, it remains to
show that the adjacency operator of a rootless
quasi-Brownian directed tree is a Brownian isometry.
Clearly \eqref{Hak3} holds, so by Lemma~
\ref{adja1}(i), $\slamj$ is a $2$-isometry. It follows
from Lemma~ \ref{basicws} and the definition of a
quasi-Brownian directed tree that
   \allowdisplaybreaks
   \begin{align*}
\triangle_{\slamj} \triangle_{\slamj^*}
\triangle_{\slamj} (e_u) & =
(\deg{u}-1)\triangle_{\slamj} \triangle_{\slamj^*}
(e_u)
   \\
& = (\deg{u}-1)\triangle_{\slamj} \bigg(\sum_{v\in
\child{\parent{u}}\setminus \{u\}}e_v \bigg)
   \\
& = \sum_{v\in \child{\parent{u}} \setminus \{u\}}
(\deg{u}-1)(\deg{v}-1) e_v = 0, \quad u\in V.
   \end{align*}
Hence $\slamj$ is a Brownian isometry. This completes
the proof.
   \end{proof}
   Combining Lemmata~ \ref{qbchar} and \ref{adja1}(i)
with Proposition~ \ref{stwfa}, we get the following.
   \begin{corollary} \label{newchar}
Let $\slamj\in \B(\ell^2(V))$ be the adjacency
operator of a rooted directed tree $\tcal$ such that
$\deg{\omega} \in \{2,3,4, \ldots\}$. Set
$l=\deg{\omega}$. If $\slamj$ is a $2$-isometry, then
the following conditions are equivalent{\em :}
   \begin{enumerate}
   \item[(i)] $\slamj$ is a quasi-Brownian isometry,
   \item[(ii)] $\tcal$ is a quasi-Brownian  directed
tree,
   \item[(iii)] each vertex $u\in V^{\circ}$ has degree $1$ or
$l.$
   \end{enumerate}
   \end{corollary}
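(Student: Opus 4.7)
The plan is to close the loop (i) $\Rightarrow$ (ii) $\Rightarrow$ (iii) $\Rightarrow$ (i) by combining the three cited results, using the hypothesis $l = \deg{\omega} \ge 2$ to rule out the degenerate $\zbb_+$ alternative that appears in Theorem \ref{stwfa}(iii).

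First, I would dispose of (i) $\Leftrightarrow$ (ii). Since $\slamj$ is a $2$-isometry, Proposition \ref{2-is-sl} guarantees that $\tcal$ is leafless, so the hypotheses of Theorem \ref{stwfa}(iii) apply. That theorem asserts, for rooted $\tcal$, that $\slamj$ is a quasi-Brownian isometry if and only if either $\tcal$ is graph isomorphic to $\zbb_+$ or $\tcal$ is a quasi-Brownian directed tree. The first possibility would force $\deg{\omega}=1$ (see Lemma \ref{Hay2}), which contradicts $l\ge 2$. Hence (i) collapses to the single alternative (ii).

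For (ii) $\Rightarrow$ (iii), I would just invoke the definition of a quasi-Brownian directed tree of valency $l$ (conditions \eqref{(B2)}--\eqref{(B4)} in Example \ref{trzykrow}(c)): property \eqref{(B2)} says that every vertex has degree $1$ or $l$. Conversely, for (iii) $\Rightarrow$ (ii), the key is that $\slamj$ is a $2$-isometry with the constant-modulus weight pattern $|\lambda_v|\equiv 1$, so $\beta_u\equiv 1$ in Lemma \ref{adja1}(i). That lemma then yields exactly the algebraic identity
\begin{align*}
\sum_{v\in \child{u}} \deg{v} = 2\deg{u} - 1, \quad u \in V,
\end{align*}
which is \eqref{Hak3}. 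Combined with the assumption (iii) $=$ \eqref{(B2)}, and using that $\tcal$ is rooted and leafless with $\deg{\omega}=l\ge 2$, the equivalence (i) $\Leftrightarrow$ (iii) of Lemma \ref{qbchar} gives that $\tcal$ is a quasi-Brownian directed tree, i.e.\ (ii).

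There is no real obstacle: the substantive work has already been done in Theorem \ref{stwfa}, Lemma \ref{adja1} and Lemma \ref{qbchar}. The only point that requires minor care is verifying at the outset that $\tcal$ is leafless so that Lemma \ref{qbchar} is applicable, and noticing that the hypothesis $l\ge 2$ is precisely what excludes the $\zbb_+$ branch in Theorem \ref{stwfa}(iii), thereby collapsing the disjunction to (ii).
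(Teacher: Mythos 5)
Your argument is correct and uses exactly the ingredients the paper itself cites for this corollary (Lemma \ref{qbchar}, Lemma \ref{adja1}(i), and Theorem \ref{stwfa}(iii)), with the same role for the hypothesis $l\Ge 2$ in excluding the $\zbb_+$ alternative; the only unremarked detail is that local finiteness of $\tcal$ (needed for Lemma \ref{adja1}) follows from $\slamj\in\B(\ell^2(V))$ via Lemma \ref{basicws}(ii). This is essentially the paper's own proof, merely written out in full.
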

Below we concentrate on $2$-isometric adjacency
operators of directed trees which satisfy certain
degree constraints (see Figure \ref{figure-p1} for an
illustration of parts (i) and (ii) of Lemma~
\ref{constant}).
   \begin{figure}[t]
   \begin{center}
   \subfigure {
\includegraphics[scale=0.30]{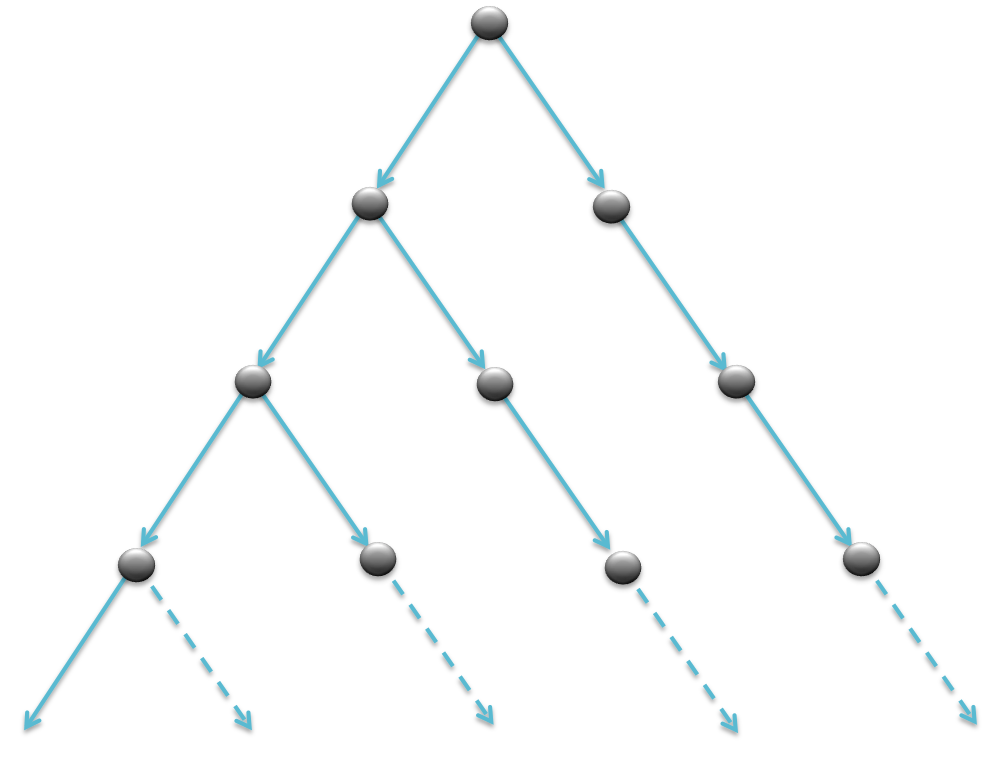}
} \subfigure {
\includegraphics[scale=0.25]{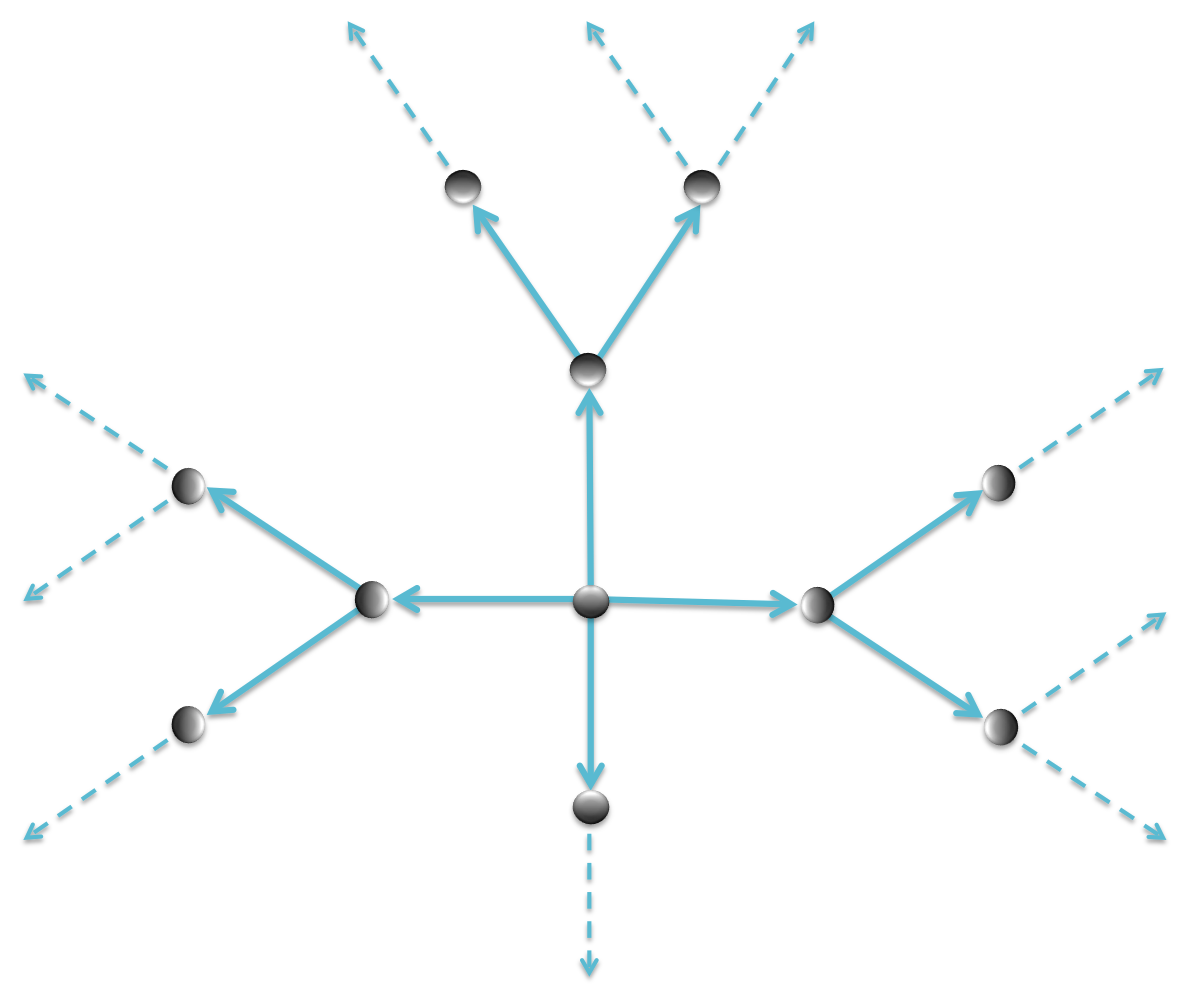}
} \caption{Examples of directed trees satisfying the
conditions (i) and (ii) of Lemma~ \ref{constant} with
$l=2$ and $l=4,$ respectively.} \label{figure-p1}
   \end{center}
   \end{figure}
   \begin{lemma} \label{constant}
Let $\mathscr T$ be a rooted, leafless and locally
finite directed tree. Set $l=\deg{\rot}$. Suppose
$\slamj\in \B(\ell^2(V))$ is a $2$-isometry and
$d_{\slamj^{\prime}}$ is given by {\em \eqref{defdun}}
with $\slamj^{\prime}$ in place of $\slam$ $($see
Lemma~ {\em \ref{cisws}}$)$. Then the following
assertions~ hold.
   \begin{enumerate}
   \item[(i)]  Assume that each vertex $u\in V^{\circ}$ has
degree $1$ or $l$. Then
   \begin{align} \label{moment1}
d_{\slamj^{\prime}}(u,n)=
   \begin{cases}
1 & \text{if } \deg{u}=1,
   \\[.5ex]
\frac{1}{l+1}\left(1 + l^{1-2n}\right) & \text{if }
\deg{u}=l,
   \end{cases}
\quad n \in \zbb_+, \, u \in V.
   \end{align}
   \item[(ii)] Assume that $l\Ge 2$ and $l- 1$ vertices of
$\child{\rot}$ have degree $2$. Then each $u\in V$ has
degree $1$, $2$ or $l$ and
   \begin{align}  \label{moment-1}
d _{\slamj^{\prime}}(u,n) =
   \begin{cases}
1 & \text{if } \deg{u}=1, \, n \in \zbb_+,
   \\[.5ex]
\frac{1}{3}\left(1 + 2^{1-2n}\right) & \text{if }
\deg{u}=2, \, n \in \zbb_+,
   \\[.5ex]
\frac{1}{3l^2}\left(l + 2 + 2(l-1) 2^{2(1-n)}\right) &
\text{if } \deg{u}=l, \, n\in \nbb,
   \end{cases}
\quad u \in V.
   \end{align}
   \end{enumerate}
   \end{lemma}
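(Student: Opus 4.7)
The backbone of both parts is the recursion provided by Lemma~\ref{adja1}(ii) specialized to $\beta_u\equiv 1$, namely $d_{\slamj^{\prime}}(u,0)=1$ and
\[
d_{\slamj^{\prime}}(u,n+1)=\frac{1}{(\deg u)^2}\sum_{v\in\child{u}}d_{\slamj^{\prime}}(v,n),
\]
which reduces everything to knowing, at every vertex $u$, both $\deg u$ and the multiset $\{\deg v:v\in\child{u}\}$. The first step in each part is therefore to pin down the local degree structure of $\tcal$; the formulas then drop out by induction on $n$.

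For Part~(i), if $l=1$ every vertex has degree $1$ and both lines of \eqref{moment1} collapse to the identity $1=1$, so we assume $l\Ge 2$. Then the hypothesis together with the $2$-isometricity of $\slamj$ puts us in case (iii) of Corollary~\ref{newchar}, so $\tcal$ is a quasi-Brownian directed tree of valency $l$. By \eqref{(B3)} and \eqref{(B4)}, a vertex of degree $1$ has a unique child of degree $1$, while a vertex of degree $l$ has exactly one child of degree $l$ and $l-1$ children of degree $1$. Setting $a_n:=1$ and $b_n:=\frac{1}{l+1}(1+l^{1-2n})$, the induction step reduces to checking $a_{n+1}=a_n$ (trivial) and $b_{n+1}=\frac{1}{l^2}\bigl(b_n+(l-1)a_n\bigr)$, the latter being a short algebraic verification using $(l-1)(l+1)=l^2-1$; together with $a_0=b_0=1$, this yields \eqref{moment1}.

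For Part~(ii), the local degree structure must be read off from the $2$-isometry condition \eqref{char2iso2}, which for $\slamj$ becomes $\sum_{v\in\child{u}}(2-\deg v)=1$. Since $\tcal$ is leafless, $\deg v\Ge 1$, and the only possibilities are: (a) if $\deg u=1$, the unique child has degree $1$; (b) if $\deg u=2$, the two children have degrees $1$ and $2$; and (c) if $\deg u=l\Ge 3$ with $l-1$ children already known to be of degree $2$, the remaining child is forced to have degree $1$. Applying (c) at $\rot$ identifies $\child{\rot}$, and then (a) and (b) propagate inductively on the generation to show that every descendant of $\rot$ has degree $1$ or $2$; hence $\rot$ is the only vertex of degree $l$ when $l\Ge 3$, while for $l=2$ the degree-$2$ and degree-$l$ lines of \eqref{moment-1} coincide by a direct algebraic check. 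With the local structure settled, \eqref{moment-1} follows by induction on $n$: the degree-$1$ line is immediate; the degree-$2$ line follows from the identity $\frac{1}{3}(1+2^{-1-2n})=\frac{1}{4}\bigl(1+\frac{1}{3}(1+2^{1-2n})\bigr)$; and the degree-$l$ line is initialized by hand at $n=1$ (using $d_{\slamj^{\prime}}(v,0)=1$ to get $d_{\slamj^{\prime}}(\rot,1)=1/l$) and propagated to $n\Ge 2$ by plugging the already-known values $d_{\slamj^{\prime}}(v,n)=1$ and $d_{\slamj^{\prime}}(v,n)=\frac{1}{3}(1+2^{1-2n})$ into the recursion at $\rot$, using $(l-1)\cdot 2^{1-2n}=2(l-1)\cdot 2^{-2n}$ to match the closed form.

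The main technical obstacle is the bookkeeping in Part~(ii): one must verify that the degree structure really propagates as advertised throughout the whole tree (so that no vertex of degree distinct from $1$, $2$, or $l$ can appear), handle the $l=2$ coincidence separately so that the two stated formulas are seen to agree, and note that the closed form in the $\deg u=l$ case fails at $n=0$ when $l\Ge 3$, which is why the induction in that case must start at $n=1$ rather than $n=0$.
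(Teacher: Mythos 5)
Your proposal is correct and follows essentially the same route as the paper: pin down the local degree structure from the $2$-isometry identity $\sum_{v\in\child{u}}\deg{v}=2\deg{u}-1$ (your $\sum_{v\in\child{u}}(2-\deg{v})=1$ is the same condition) and then induct on $n$ via the recursion of Lemma \ref{adja1}(ii), starting the degree-$l$ case at $n=1$. The only cosmetic differences are that in part (i) you route the degree bookkeeping through Corollary \ref{newchar} and the quasi-Brownian tree axioms instead of reading it off the degree-sum identity directly, and in part (ii) you redo the degree-$2$ induction rather than citing part (i) for the subtree rooted at $u$ via Proposition \ref{restrcd}; all your algebraic verifications check out.
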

   \begin{proof}
   Since $\slamj$ is a $2$-isometry satisfying
\eqref{beta} with $\beta_u\equiv 1$, Lemma~
\ref{adja1} yields
   \allowdisplaybreaks
   \begin{gather} \label{2isoadj}
\sum_{v \in \child{u}} \deg{v} = 2 \deg{u} - 1, \quad
u \in V,
   \\  \notag
d _{\slamj^{\prime}}(u,0)=1, \quad u\in V,
   \\ \label{recur2}
d _{\slamj^{\prime}}(u, n+1) = \frac{1}{(\deg{u})^{2}}
\sum_{v \in \child{u}} d _{\slamj^{\prime}}(v, n),
\quad u \in V, \, n \in \zbb_+.
   \end{gather}
These three formulas and appropriate degree
assumptions are all that are needed to prove (i) and
(ii). It follows from \eqref{2isoadj} that
   \begin{align*}
\text{$\deg{v}=1$ whenever $v\in \child{u}$ and
$\deg{u}=1.$}
   \end{align*}
Hence, by a simple induction argument based on
\eqref{recur2}, we get
   \begin{align}  \label{jeden}
\text{$d _{\slamj^{\prime}}(u,n)=1$ for all
$n\in\zbb_+$ whenever $\deg{u}=1$.}
   \end{align}
This proves the degree $1$ case of \eqref{moment1} and
\eqref{moment-1}.

(i) Without loss of generality, we may assume that
$l\Ge 2.$ Suppose that $\deg{u}=l$. Then
\eqref{2isoadj} and the assumption of (i) yield
   \begin{align}\label{1-l-l-1}
\text{$\child{u}$ consists of $l-1$ vertices of degree
$1$ and one vertex of degree $l$.}
   \end{align}
We will verify the bottom formula in \eqref{moment1}
by using induction on $n$. The case of $n=0$ is
obvious. Assume that it holds for a fixed $n\in
\zbb_+$. Then
   \begin{align*}
d _{\slamj^{\prime}}(u,n+1) &
\overset{\eqref{recur2}}= l^{-2} \sum_{v \in
\child{u}} d _{\slamj^{\prime}}(v, n)
   \\
&\hspace{.4ex}\overset{(*)}= l^{-2} \left((l -1) +
\frac{1}{l+1}\left(1 + l^{1-2n}\right)\right)
   \\
&\hspace{.7ex}= \frac{1}{l+1}\left(1 +
l^{1-2(n+1)}\right),
   \end{align*}
where ($*$) follows from \eqref{1-l-l-1},
\eqref{jeden} and the induction hypothesis. This
completes the proof of the assertion (i).

(ii) An induction argument based on \eqref{ind1} and
\eqref{2isoadj} shows that\footnote{\;The statement
(a) holds without the degree assumption of (ii).}
   \begin{enumerate}
   \item[(a)] if $u\in V$ is of degree $2$, then
$\child{u}$ consists of one vertex of degree $1$ and
one vertex of degree $2$,
   \item[(b)] $\child{\rot}$ consists of one vertex
of degree $1$ and $l- 1$ vertices of degree $2$,
   \item[(c)] if $u\in
V^{\circ}$, then $\deg{u}\in \{1,2\}$,
   \item[(d)] if $l\Ge 3$, then $\rot$ is the
only vertex of $V$ of degree $l$.
   \end{enumerate}
If $\deg{u}=2$, then by (c) the directed tree
$\big(\des{u}, (\des{u} \times \des{u}) \cap E\big)$
satisfies the assumption of (i) for $l=2$, and thus
the degree $2$ case of \eqref{moment-1} follows from
(i) by applying Proposition~ \ref{restrcd} (cf.\ the
proof of Theorem~ \ref{main2}). By (c) and (d), it
remains to consider the case of $\deg{u}=l\Ge 3$,
i.e., $u=\rot$. If $n\in \nbb$, then
   \begin{align*}
d _{\slamj^{\prime}}(\rot,n)
&\overset{\eqref{recur2}}= l^{-2} \sum_{v \in
\child{\rot}} d _{\slamj^{\prime}}(v, n-1)
   \\
& \hspace{.4ex}\overset{(*)}= l^{-2} \left(1+
\frac{l-1}{3}\left(1 + 2^{1-2(n-1)}\right)\right)
   \\
& \hspace{.8ex} = \frac{1}{3l^2}\left(l + 2 + 2(l-1)
2^{2(1-n)}\right),
   \end{align*}
where ($*$) follows from (b), \eqref{jeden} and the
degree $2$ case of \eqref{moment-1}. This completes
the proof of Lemma~ \ref{constant}.
   \end{proof}
   \begin{remark}
An inspection of the proof of Lemma~ \ref{constant}
reveals that
   \begin{enumerate}
   \item[$\bullet$] under the assumption of (i),
there are infinitely many vertices of degree $1$ and,
if $l \Ge 2$, there are infinitely many vertices of
degree $l$ (cf.\ Corollary~ \ref{newchar}),
   \item[$\bullet$] under the assumption of (ii),
there are infinitely many vertices of degree $1$ and
infinitely many vertices of degree $2$; if $l\Ge 3$,
then $\rot$ is the only vertex of degree $l$.
   \hfill $\diamondsuit$
   \end{enumerate}
   \end{remark}
Let us make some comments regarding the condition
\eqref{2isoadj}.
   \begin{remark}
Assume that $\tcal$ is a rooted, leafless and locally
finite directed tree. If $\deg{u}=1$ or $\deg{u}=2$,
then $d _{\slamj^{\prime}}(u,n)$ can be calculated
explicitly. If $\deg{u}=3$, then, by \eqref{2isoadj},
$\child{u}$ may consists of one vertex of degree $3$
and two vertices of degree $1$, or two vertices of
degree $2$ and one vertex of degree $1$. Now we have
two possibilities either the degree $3$ case appear
infinitely many times, or the process stops after a
finite number of steps and then the degree $1$ and the
degree $2$ cases appear both infinitely many times. If
$\deg{u}=4$, then, by \eqref{2isoadj} again,
$\child{u}$ may consists of one vertex of degree $4$
and three vertices of degree $1$, or one vertex of
degree $3$, one vertex of degree $2$ and two vertices
of degree $1$, or three vertices of degree $2$ and one
vertex of degree $1$, and so on.
   \hfill $\diamondsuit$
   \end{remark}
Now we show that the Cauchy dual subnormality problem
has an affirmative solution for certain adjacency
operators which, in general, do not satisfy the kernel
condition (see Theorem~ \ref{constant-t}). This leads
to Hausdorff moment sequences which are structurally
different from those in Section~ \ref{Sec10}. We also
answer the question of when such adjacency operators
are Brownian or quasi-Brownian isometries.
   \begin{theorem} \label{constant-t}
Let $\mathscr T$ be a rooted, leafless and locally
finite directed tree. Set $l=\deg{\rot}$. Suppose that
$\slamj\in \B(\ell^2(V))$ is a $2$-isometry and one of
the following two conditions holds{\em :}
   \begin{enumerate}
   \item[(i)] each vertex $u\in V^{\circ}$ has degree $1$ or
$l$,
   \item[(ii)] $l\Ge 2$ and $l- 1$ vertices of
$\child{\rot}$ have degree $2$.
   \end{enumerate}
Then the following statements are valid{\em :}
   \begin{enumerate}
   \item[\ding{202}] the Cauchy dual $\slamj^{\prime}$
of $\slamj$ is a subnormal contraction,
   \item[\ding{203}] $\slamj$ satisfies the kernel
condition if and only if $l=1,$
   \item[\ding{204}] if {\em (i)} holds, then $\slamj$
is always a quasi-Brownian isometry; moreover $\slamj$
is a Brownian isometry if and only if $l=1,$
   \item[\ding{205}] if {\em (ii)}
holds, then $\slamj$ is never a Brownian isometry;
moreover, $\slamj$ is a quasi-Brownian isometry if and
only if $l=2.$
   \end{enumerate}
   \end{theorem}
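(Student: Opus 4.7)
The plan is to prove \ding{202} by identifying $\slamj'^{*n}\slamj'^n$ as a rational function of $\slamj^*\slamj$ and invoking Lambert's theorem, and then to deduce \ding{203}--\ding{205} from the degree structure of $\tcal$ already extracted in the course of proving Lemma \ref{constant}, together with Theorem \ref{stwfa} and Corollary \ref{newchar}.

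For \ding{202}, the starting observation is that both $\slamj^*\slamj$ and $\slamj'^{*n}\slamj'^n$ are diagonal with respect to $\{e_u\}_{u \in V}$, with eigenvalues $\deg u$ and $d_{\slamj'}(u,n)$ respectively (by Lemma \ref{basicws}, Proposition \ref{powers} applied to $\slamj'$, and Proposition \ref{cisws}). The explicit formulas \eqref{moment1} and \eqref{moment-1} show that $d_{\slamj'}(u,n)$ depends on $u$ only through $\deg u$, and a routine algebraic check matching the values against the rational functions $r_n$ listed in Table \ref{Table1} yields $r_n(\deg u) = d_{\slamj'}(u,n)$ for every $u\in V$ and $n\in\zbb_+$. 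By the functional calculus this gives $\slamj'^{*n}\slamj'^n = r_n(\slamj^*\slamj)$, so, writing $G$ for the spectral measure of $\slamj^*\slamj$, one obtains
\begin{align*}
\|\slamj'^n f\|^2 = \int r_n(x)\, d\langle G(x) f, f\rangle, \quad f\in \ell^2(V), \, n\in \zbb_+.
\end{align*}
Since the spectrum of $\slamj^*\slamj$ lies in the finite set $\{1,l\}$ (case (i)) or $\{1,2,l\}$ (case (ii)), it suffices to verify that $\{r_n(x)\}_{n=0}^\infty$ is a Hausdorff moment sequence for each such $x$. A direct decomposition exhibiting $r_n(x)$ as a non-negative combination of sequences of the form $c^n$ with $c\in [0,1]$ (e.g.\ $r_n(l) = \tfrac{1}{l+1} + \tfrac{l}{l+1}(l^{-2})^n$ in case (i)) does this, and Lemma \ref{calka} transfers the property to $\{\|\slamj'^n f\|^2\}_{n=0}^\infty$ for every $f$. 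Lambert's Theorem \ref{Lam}, combined with \eqref{2hypcon}, then concludes that $\slamj'$ is a subnormal contraction.

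For \ding{203}, Proposition \ref{kcfws} applied to the adjacency operator translates the kernel condition into the requirement that $\deg v$ depend only on $\parent v$, i.e.\ that siblings share the same degree. Under either (i) or (ii), the proof of Lemma \ref{constant}---via \eqref{1-l-l-1} in case (i) and assertion (b) in case (ii)---exhibits $\child \rot$ as containing vertices of different degrees whenever $l\Ge 2$, so the kernel condition fails in that regime. In the remaining case $l=1$ of (i), Lemma \ref{Hay2} forces $\tcal \cong \zbb_+$ and $\slamj$ is isometric, so the kernel condition holds trivially.

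For \ding{204} and \ding{205}, Theorem \ref{stwfa} and Corollary \ref{newchar} do the remaining work. Under (i), \eqref{2isoadj} and \eqref{1-l-l-1} immediately verify conditions \eqref{(B3)} and \eqref{(B4)}, so that $\tcal$ is a quasi-Brownian directed tree (of valency $l$ when $l\Ge 2$) or $\tcal \cong \zbb_+$ (when $l=1$); Theorem \ref{stwfa}(iii) then delivers the quasi-Brownian claim, while Theorem \ref{stwfa}(ii) pins the Brownian case to $l=1$. Under (ii), $l\Ge 2$ rules out Brownianity by Theorem \ref{stwfa}(ii); for quasi-Brownianity, Corollary \ref{newchar} reduces the question to whether every $u\in V^\circ$ has degree $1$ or $l$, and statements (a)--(d) in the proof of Lemma \ref{constant} show this holds precisely when $l=2$ (since for $l\Ge 3$ the degree-$2$ vertices in $V^\circ$ would then need to have degree $1$ or $l$, a contradiction). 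The main obstacle I anticipate is the careful packaging of the moment-sequence argument in \ding{202}, and in particular exhibiting explicit representing measures for $\{r_n(x)\}_n$ at each eigenvalue of $\slamj^*\slamj$; everything else essentially amounts to reading off the degree information already encoded in the proof of Lemma \ref{constant}.
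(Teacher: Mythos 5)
Your overall architecture matches the paper's: reduce \ding{202} to showing that $\{d_{\slamj^{\prime}}(u,n)\}_{n=0}^{\infty}$ is a Stieltjes moment sequence for each $u$ (the paper does this via \cite[Theorem 6.1.3]{JJS} rather than through the functional calculus and Lemmas \ref{calka} and \ref{Lam}, but these are equivalent routes), and settle \ding{203}--\ding{205} from the degree data in Lemma \ref{constant} together with Theorem \ref{stwfa}. Your treatments of \ding{203} and \ding{204} are essentially the paper's. For \ding{205} you take a genuinely different and in fact cleaner route: the paper derives $\|\slamj'^n e_{\omega}\|^2$ twice, once from Theorem \ref{BrownianG} and once from Lemma \ref{constant}(ii), and compares limits as $n\to\infty$ to force $l=2$; you instead invoke Corollary \ref{newchar} and observe that the degree-$2$ vertices in $V^{\circ}$ force $2\in\{1,l\}$. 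That works and avoids the asymptotic comparison.

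The gap is in \ding{202}, case (ii) with $l\Ge 3$, which is the only nontrivial case and the one you explicitly defer. Your blanket claim that $r_n(x)$ decomposes as a non-negative combination of geometric sequences $c^n$ is precisely what must be proved there, and it is not automatic: the table entry for case (ii) is defined piecewise, with $r_0(l)=1$ prepended to a formula whose value at $n=0$ would be $\tfrac{3l-2}{l^2}\neq 1$. Writing $r_n(l)=a\,\delta_0\text{-mass}+\tfrac{8(l-1)}{3l^2}\bigl(\tfrac14\bigr)^n+\tfrac{l+2}{3l^2}\cdot 1^n$ forces $a=\tfrac{(l-1)(l-2)}{l^2}$, and the whole content of the statement is that this mass is non-negative for $l\Ge 2$. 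This is exactly the backward-extension inequality $\int_{\rbb_+}t^{-1}\,d\mu(t)=\tfrac{3l-2}{l^2}\Le 1$ that the paper checks via Lemma \ref{bext} applied to $\mu=\tfrac{2(l-1)}{3l^2}\delta_{1/4}+\tfrac{l+2}{3l^2}\delta_1$. It cannot be waved away as routine packaging: Example \ref{przadj} shows that one further level of the same construction makes the analogous backward extension fail and destroys subnormality, so this inequality is where the theorem lives. You need to carry out that computation (or cite Lemma \ref{bext} and verify its hypothesis) to close the argument.
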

   \begin{proof}
\ding{202} It follows from Lemma~ \ref{cisws}, the
formula \eqref{defdun} (applied to $\slamj^{\prime}$)
and \cite[Theorem~ 6.1.3]{JJS} that $\slamj^{\prime}$
is subnormal if and only if
   \begin{align} \label{stims}
\text{$\{d _{\slamj^{\prime}}(u,n)\}_{n=0}^{\infty}$
is a Stieltjes moment sequence for every $u\in V.$}
   \end{align}
If (i) holds, then \eqref{stims} follows easily from
\eqref{moment1}.

Now, assume that (ii) holds. In view of
\eqref{moment-1},
$\{d_{\slamj^{\prime}}(u,n)\}_{n=0}^{\infty}$ is a
Stieltjes moment sequence whenever $u$ is of degree
$1$ or $2$. The only nontrivial case is when $l\Ge 3$
and $\deg{u}=l$, i.e., $u=\rot$ (see the statement (d)
of the proof of Lemma~ \ref{constant}). Then, by
\eqref{moment-1}, we have
   \begin{align} \label{slon}
d _{\slamj^{\prime}}(\rot,n+1) = \frac{2(l-1)}{3l^2}
\frac{1}{4^{n}} + \frac{l + 2}{3l^2}, \quad n\in
\zbb_+.
   \end{align}
This implies that $\{d
_{\slamj^{\prime}}(\rot,n+1)\}_{n=0}^{\infty}$ is a
Stieltjes moment sequence with the representing
measure $\mu= \frac{2(l-1)}{3l^2} \delta_{\frac{1}{4}}
+ \frac{l + 2}{3l^2} \delta_1$. Since $l\Ge 3$, it is
easily seen that
   \begin{align*}
\int_{\rbb_+} \frac{1}{t} d \mu(t) = \frac{3l-2}{l^2}
\Le 1.
   \end{align*}
Hence, by Lemma~ \ref{bext}, $\{d_{\slamj^{\prime}}
(\rot,n)\}_{n=0}^{\infty}$ is a Stieltjes moment
sequence.

\ding{203} This is an immediate consequence of
Proposition~ \ref{stwfa} and Lemma~ \ref{Hay2}.

\ding{204} This is a consequence of Lemma~ \ref{Hay2},
Corollary~ \ref{newchar} and Proposition~ \ref{stwfa}
(cf.\ Proposition~ \ref{briai}).

\ding{205} Assume that (ii) holds. It follows from
Proposition~ \ref{stwfa}(ii) that $\slamj$ is never a
Brownian isometry. Suppose now $\slamj$ is a
quasi-Brownian isometry. Then, by Theorem~
\ref{BrownianG}, we have
   \begin{align} \label{Sam3Zen}
\slamj'^{*n}\slamj'^n =
(I+\slamj^*\slamj)^{-1}(I+(\slamj^*\slamj)^{1- 2n}),
\quad n \in \zbb_+.
   \end{align}
It follows from Lemma~ \ref{basicws}(v) that
$\slamj^*\slamj$ is a diagonal operator (with respect
to the orthogonal basis $\{e_u\}_{u\in V}$) with
diagonal elements $\{\deg u\}_{u\in V}.$ Hence,
\eqref{Sam3Zen} yields
   \begin{align} \label{krowa1}
\|\slamj'^n e_{\omega}\|^2 = \frac{1+l^{1-2n}}{1+l},
\quad n\in \nbb.
   \end{align}
According to Lemma~ \ref{constant}(ii) and
\eqref{defdun} (with $\slamj'$ in place of $\slam$),
we have
   \begin{align} \label{krowa2}
\|\slamj'^n e_{\omega}\|^2 = \frac{l + 2 + 2(l-1)
2^{2(1-n)}}{3l^2}, \quad n\in \nbb.
   \end{align}
Comparing \eqref{krowa1} with \eqref{krowa2} and
passing to the limit as $n\to\infty,$ we see that
$l=2$ (recall that $l\Ge 2$). Conversely, if $l=2,$
then by Lemma~ \ref{constant}(ii), each $u\in V$ is of
degree $1$ or $2.$ Hence, in view of Corollary~
\ref{newchar}, $\slamj$ is a quasi-Brownian isometry.
This completes the proof.
   \end{proof}
Below we construct a rooted and leafless directed tree
$\tcal$ such that
   \begin{align} \label{ajaja}
   \left.
   \begin{minipage}{70ex}
   \begin{enumerate}
   \item[$\bullet$] $\slamj\in \B(\ell^2(V))$ is a $2$-isometry,
   \item[$\bullet$] $\slamj'$ is a subnormal contraction,
   \item[$\bullet$] $\slamj$ does not satisfy the
kernel condition,
   \item[$\bullet$] $\slamj$ is not a
quasi-Brownian isometry.
   \end{enumerate}
   \end{minipage}
   \right\}
   \end{align}
In fact, we show that for every integer $l\Ge 2,$
there exists a rooted and leafless directed tree with
$2$-isometric adjacency operator, which satisfies the
condition (ii) of Theorem~ \ref{constant-t}. Similar
construction can be performed in the case of the
condition (i) of Theorem~ \ref{constant-t}; the
resulting directed tree is either $\zbb_+$ if $l=1$ or
a quasi-Brownian directed tree of valency $l$ if $l\Ge
2$.
   \begin{example} \label{nbnkcsub}
Let us fix $l \in \{2,3,4, \ldots\}$. Using
\eqref{ind1}, one can construct inductively a rooted
and leafless directed tree $\tcal$ with the following
properties (see the right subfigure of Figure
\ref{figure-p1} in which $l=4$):
   \begin{enumerate}
   \item[(i)] $\deg {\rot}=l$,
   \item[(ii)] $\child{\rot}$ consists of one
vertex of degree $1$ and $l-1$ vertices of degree $2$,
   \item[(iii)]
for each vertex $u\in V^{\circ}$ of degree $1,$
$\child{u}$ consists of one vertex of degree~ $1,$
   \item[(iv)]
for each vertex $u\in V^{\circ}$ of degree $2$,
$\child{u}$ consists of one vertex of degree $1$ and
one of degree $2.$
   \end{enumerate}
Clearly, the directed tree $\tcal$ satisfies the
condition (ii) of Theorem~ \ref{constant-t}. Let
$\slamj$ be the adjacency operator of $\tcal.$ Since
$\|\slamj e_u\|^2 \Le l$ for all $u\in V$, we infer
from Lemma~ \ref{basicws} that $\slamj \in
\B(\ell^2(V))$ and $\|\slamj\|^2 = l$. It follows from
\mbox{(i)-(iv)} that
   \begin{align*}
\sum_{v\in \child{u}} \deg{v} = 2 \deg u -1, \quad u
\in V.
   \end{align*}
This combined with Lemma~ \ref{adja1}(i) implies that
$\slamj$ is a $2$-isometry. If $l\Ge 3,$ then by
Theorem~ \ref{constant-t}, the adjacency operator
$\slamj$ satisfies \eqref{ajaja}. In turn, if $l=2,$
then by Theorem~ \ref{constant-t} again, $\slamj$ is a
quasi-Brownian isometry that satisfies all but the
last condition in \eqref{ajaja}.
   \hfill $\diamondsuit$
   \end{example}
   We conclude this section by showing that the Cauchy
dual subnormality problem has a negative solution in
the class of adjacency operators.
   \begin{figure}[t]
   \begin{center}
\subfigure {
\includegraphics[scale=0.58]{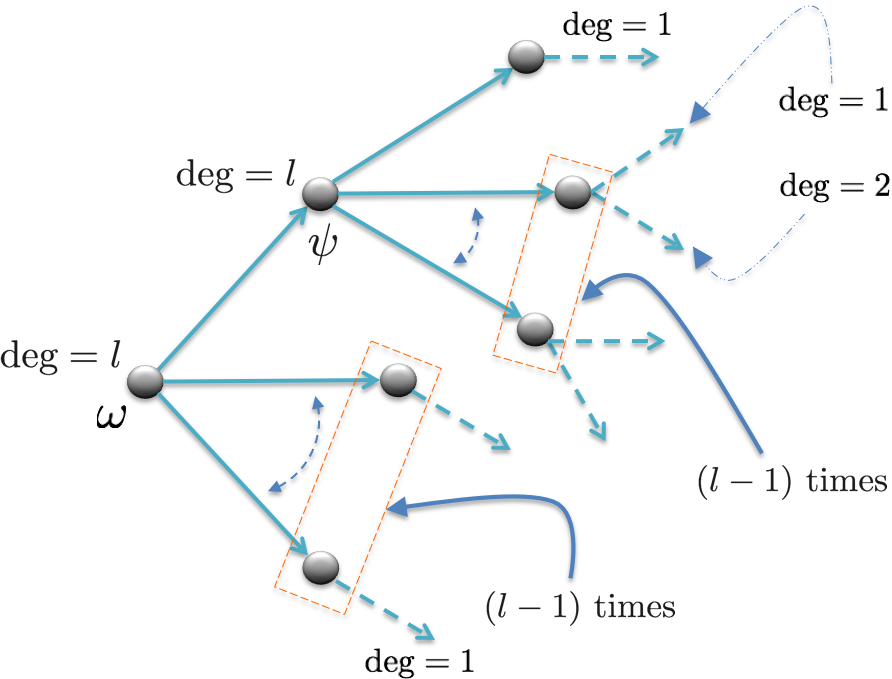}
} \caption{The directed tree appearing in Example~
\ref{przadj}.} \label{Fig1e-v2}
   \end{center}
   \end{figure}
   \begin{example} \label{przadj}
We begin by constructing an appropriate directed tree.
Let us fix $l \in \{3,4,5, \ldots\}$. Using
\eqref{ind1}, one can construct inductively a rooted
and leafless directed tree $\tcal$ with the following
properties (see Figure \ref{Fig1e-v2}):
   \begin{enumerate}
   \item[(i)] $\deg {\rot}=l$,
   \item[(ii)] $\child{\rot}$ consists of $l-1$ vertices of
degree $1$ and one vertex, say $\psi$, of degree~ $l$,
   \item[(iii)] $\child{\psi}$ consists of one
vertex of degree $1$ and $l-1$ vertices of degree $2$,
   \item[(iv)]
for each vertex $u\in V$ of degree $1$, $\child{u}$
consists of one vertex of degree~ $1$,
   \item[(v)]
for each vertex $u\in V$ of degree $2$, $\child{u}$
consists of one vertex of degree $1$ and one of degree
$2$.
   \end{enumerate}
Let $\slamj$ be the adjacency operator of the directed
tree $\tcal$. Exactly as in Example~ \ref{nbnkcsub},
we verify that $\slamj \in \B(\ell^2(V))$ and
$\|\slamj\|^2 = l$. It is also a routine matter to
verify that the properties \mbox{(i)-(v)} yield
   \begin{align*}
\sum_{v\in \child{u}} \deg{v} = 2 \deg u -1, \quad u
\in V.
   \end{align*}
This together with Lemma~ \ref{adja1}(i) implies that
$\slamj$ is a $2$-isometry. Set
$\tcal_{\psi}=\big(\des{\psi}, (\des{\psi} \times
\des{\psi}) \cap E\big)$. Then $\tcal_{\psi}$ is a
rooted and leafless directed tree with the root
$\psi$. Recall that by Lemma~ \ref{basicws}, the space
$\ell^2(\des{\psi})$ is invariant for $\slamj$ and
$\slamj^*\slamj$, and that the restriction of $\slamj$
to $\ell^2(\des{\psi})$ coincides with the adjacency
operator $S_{\psi,\mathbb 1}$ of the directed tree
$\tcal_{\psi}$ (cf.\ the proof of Theorem~
\ref{main2}). Hence $S_{\psi,\mathbb 1}\in
\B(\ell^2(\des{\psi}))$ is a $2$-isometry, and by
Proposition~ \ref{restrcd}, $S_{\psi,\mathbb
1}^{\prime}$ coincides with the restriction of
$\slamj^{\prime}$ to $\ell^2(\des{\psi})$. As a
consequence, we see that $d_{S_{\psi,\mathbb
1}^{\prime}}(u,n)=d_{\slamj^{\prime}}(u,n)$ for all $u
\in \des{\psi}$ and $n\in \zbb_+$. Applying Theorem~
\ref{constant-t} (the case (ii)) to the directed tree
$\tcal_{\psi}$ and its adjacency operator
$S_{\psi,\mathbb 1}$ and using \cite[Theorem~
6.1.3]{JJS}, we deduce that
$\{d_{\slamj^{\prime}}(\psi,n)\}_{n=0}^{\infty}$ is a
Stieltjes moment sequence. A careful look at the proof
of Theorem~ \ref{constant-t} (use \eqref{slon} with
$\psi$ and $S_{\psi,\mathbb 1}^{\prime}$ in place of
$\rot$ and $\slamj^{\prime}$, respectively) shows that
the positive Borel measure $\mu$ on $\rbb_+$ defined
by
   \begin{align} \label{slon2}
\mu = \frac{2(l-1)}{3l^2} \delta_{\frac{1}{4}} +
\frac{l+2}{3l^2} \delta_1
   \end{align}
is a unique representing measure of
$\{d_{\slamj^{\prime}}(\psi,n+1)\}_{n=0}^{\infty}$. It
follows from Lemma~ \ref{bext} that the positive Borel
measure $\nu$ on $\rbb_+$ given by
   \begin{align*}
\nu(\varDelta) = \int_{\varDelta} \frac 1 t d \mu(t) +
\Big(1-\int_{\rbb_+} \frac 1 t d \mu(t)\Big)
\delta_0(\varDelta), \quad \varDelta \in
\borel{\rbb_+},
   \end{align*}
is a representing measure of the Stieltjes moment
sequence
$\{d_{\slamj^{\prime}}(\psi,n)\}_{n=0}^{\infty}$. This
and \eqref{slon2} imply that
   \begin{align} \label{slon3}
\nu = \frac{(l-1)(l-2)}{l^2} \delta_0 +
\frac{8(l-1)}{3l^2} \delta_{\frac{1}{4}} +
\frac{l+2}{3l^2} \delta_1.
   \end{align}
Using (iv) and Lemma~ \ref{adja1}(ii), we verify that
\eqref{jeden} holds. Hence, by applying Lemma~
\ref{adja1}(ii) to $u=\rot$, we get
   \allowdisplaybreaks
   \begin{align*}
d_{\slamj^{\prime}}(\rot, n+1) & = \frac{1}{l^{2}}
\sum_{v \in \child{\rot}} d_{\slamj^{\prime}}(v, n)
   \\
&\hspace{-.3ex}\overset{\mathrm{(ii)}}=
\frac{1}{l^{2}} \left(l-1 + d_{\slamj^{\prime}}(\psi,
n)\right)
   \\
& = \frac{1}{l^{2}} \left(l-1 + \int_{\rbb_+} t^n
d\nu(t)\right)
   \\
&= \int_{\rbb_+} t^n d \rho(t), \quad n \in \zbb_+,
   \end{align*}
where, by \eqref{slon3}, $\rho$ is the positive Borel
measure on $\rbb_+$ defined by
   \begin{align*}
\rho = \frac{(l-1)(l-2)}{l^4} \delta_0 +
\frac{8(l-1)}{3l^4} \delta_{\frac{1}{4}} +
\frac{3(l-1)l^2+l+2}{3l^4} \delta_1.
   \end{align*}
This means that
$\{d_{\slamj^{\prime}}(\rot,n+1)\}_{n=0}^{\infty}$ is
a Stieltjes (in fact a Hausdorff) moment sequence with
the unique representing measure $\rho$. Since
$\rho(\{0\}) > 0$ (because $l\Ge 3$), we infer from
Lemma~ \ref{bext} that
$\{d_{\slamj^{\prime}}(\rot,n)\}_{n=0}^{\infty}$ is
not a Stieltjes moment sequence. Hence, by
\cite[Theorem~ 6.1.3]{JJS}, the operator
$\slamj^{\prime}$ is not subnormal.
   \hfill $\diamondsuit$
   \end{example}
   \section*{Acknowledgement}
A part of this paper was written while the second
author visited Jagiellonian University in Summer of
2017. He wishes to thank the faculty and the
administration of this unit for their warm
hospitality.
   
   \end{document}